\newcommand{\R}{\mathbf{R}}
\newcommand{\N}{\mathbf{N}}
\newcommand{\OO}{\mathcal{O}}
\newcommand{\Z}{\mathbb{Z}}
\newcommand{\mcf}{\mathcal{F}}
\def\arrvline{\hfil\kern\arraycolsep\vline\kern-\arraycolsep\hfilneg}
\newcommand{\Digra}{\mathbf{Digra}}
\newcommand{\SDigra}{\mathbf{SDigra}}
\newcommand{\Gra}{\mathbf{Gra}}
\newcommand{\Part}{\mathbf{Part}}
\newcommand{\WGra}{\mathbf{WGra}}
\newcommand{\ood}{\mathbf{OOD}}
\DeclareMathOperator{\card}{card}
\DeclareMathOperator{\ob}{ob}
\newtheorem{theorem}{Theorem}[section]
\newtheorem{prop}[theorem]{Proposition}
\newtheorem{lem}[theorem]{Lemma}
\newtheorem{cor}[theorem]{Corollary}
\newtheorem{defn}[theorem]{Definition}
\newtheorem{rem}[theorem]{Remark}
\newtheorem{ex}[theorem]{Example}
\begin{document}
%%%%%%%%%%%%%format

\title{Rank-based linkage I: triplet comparisons and oriented simplicial complexes}
\date{}
\author{R.W.R. Darling}
\email{rwdarli@uwe.nsa.gov}
\homepage{https://www.nsa.gov}
\orcid{0000-0002-7296-9633}
\author{Will Grilliette}
\email{wbgrill@nsa.gov}
\orcid{0000-0003-0310-780X}
\affiliation{National Security Agency, Fort George G Meade MD, MD 20755-6844, USA}
\author{Adam Logan}
\email{adam.m.logan@gmail.com}
\orcid{0000-0001-6157-3521}
\affiliation{Tutte Institute for Mathematics and Computing, P.O. Box 9703, Terminal, Ottawa, ON, Canada}
\affiliation{School of Mathematics and Statistics, 4302 Herzberg Laboratories, Carleton University,
1125 Colonel By Drive, Ottawa, ON, K1S 5B6, Canada}

\maketitle
%%%%%%%%%%%%%%%%%%%%%%%%%%%%%%%%%%%%%%%%%%%%%%%%%%%%%%%%%%%%%%%%%%%%%%%%%%%%%%%%%%%%%%
\begin{abstract}
Rank-based linkage is a new tool for summarizing a collection $S$ of
objects according to their relationships. These objects are not mapped
to vectors, and ``similarity'' between objects need be neither numerical nor symmetrical.
All an object needs to do is rank nearby objects by similarity to itself, using
a \texttt{Comparator} which is transitive, but need not be consistent with any metric on the whole set.
Call this a ranking system on $S$.
Rank-based linkage is applied to the $K$-nearest neighbor digraph
derived from a ranking system. Computations occur on a
2-dimensional abstract oriented simplicial complex whose 
faces are among the points, edges, and triangles of the line graph of
the undirected $K$-nearest neighbor graph on $S$.
In $|S| K^2$ steps it builds an edge-weighted linkage graph $(S, \mathcal{L}, \sigma)$
where $\sigma(\{x, y\})$ is called the in-sway between objects $x$ and $y$.
Take $\mathcal{L}_t$ to be the links whose in-sway is at least $t$, and
partition $S$ into components of the graph $(S, \mathcal{L}_t)$, for varying $t$.
Rank-based linkage is a functor from a category of ``out-ordered'' digraphs to a category
of partitioned sets, with the practical consequence that augmenting the set of objects
in a rank-respectful way gives a fresh clustering which does not ``rip apart'' the previous one.
The same holds for single linkage clustering in the metric space context, but not for typical
optimization-based methods. 
Orientation sheaves play in a fundamental role and ensure that partially overlapping data sets can be ``glued'' together.
Open combinatorial problems are presented in the last section.
\end{abstract}

{\small
\noindent \textbf{Keywords:}
clustering, non-parametric statistics, functor, ordinal data, sheaf \\
\textbf{MSC class: } Primary: 62H30; Secondary: 05C20, 05E45, 05C76 
}

\newpage
\setcounter{tocdepth}{1}
\tableofcontents

%%%%%%%%%%%%%%%%%%%%%%%%%%%%%%%%%%%%%
\section{Introduction}

\subsection{Data science motivation}
The rank-based linkage algorithm is motivated by needs of graph data science:
\begin{enumerate}
    \item[(a)] Summarization of {\em property graphs}, where nodes and edges have non-numerical attributes, but a node
    can compare two other nodes according to similarity to itself.
    \item[(b)] {\em Graph databases} require partitioning algorithms whose output remains stable when a batch of new nodes and edges is added.
    \item[(c)] Big data partitioning algorithms which are {\em robust} against data dropout, inhomogenous scaling, and effects of high degree vertices.
\end{enumerate}

\subsection{Topological data analysis paradigm}
Topology is concerned with properties of a geometric object that are preserved under continuous deformations. 
Rank-based linkage is
a new topological data analysis technique applicable, for example, to large
edge-weighted directed graphs. Its results are
invariant under monotone increasing transformations of
weights on out-edges from each vertex.
Equivalence classes of such graphs form a category 
under a suitable definition of
morphism  (Section \ref{s:catoodigr}) and rank-based linkage is a functor into
the category of partitioned sets (of the vertex set)
(Section \ref{s:partset}), in a technical sense that satisfies (b) above.
Single linkage clustering 
exhibits a similar functorial property
in the case of finite metric spaces
\cite{car}; however, such functoriality is rare among unsupervised learning
techniques.

\subsection{Triplet comparison paradigm for data science}

\begin{figure}[t]
\caption{\textbf{Ranking system on four objects: }\textit{
The labels $1, 2, 3$ on the out-arrows from object $v_1$
indicate that $v_4, v_2, v_3$  are ranked in that order 
by similarity to $v_1$. }
}
\label{f:k4}
\begin{center}
\begin{tikzpicture}[scale=2]
\draw (0,0) node (1) {$v_1$};
\draw (0,1) node (4) {$v_4$};
\draw (-30:1) node (3) {$v_3$};
\draw (210:1) node (2) {$v_2$};
\draw[->,red] (1) to [bend left=15] node[left] {1} (4);
\draw[<-,blue] (1) to [bend right=15] node[right] {2} (4);
\draw[->,red] (1) to [bend left=15] node[below] {2} (2);
\draw[<-,orange] (1) to [bend right=15] node[above] {3} (2);
\draw[->,red] (1) to [bend left=15] node[above] {3} (3);
\draw[<-] (1) to [bend right=15] node[below] {3} (3);
\draw[->,blue] (4) to [bend left=15] node[left] {1} (3);
\draw[<-] (4) to [bend left=30] node[right] {1} (3);
\draw[->,blue] (4) to [bend right=15] node[right] {3} (2);
\draw[<-,orange] (4) to [bend right=30] node[left] {2} (2);
\draw[->,orange] (2) to [bend right=15] node[above] {1} (3);
\draw[<-] (2) to [bend right=30] node[below] {2} (3);
\end{tikzpicture}
%\scalebox{0.4}{\includegraphics{IMAGES/ArcsK4TaggedWithRanks.png} }
\end{center}
\end{figure}

Consider exploratory data analysis on a set $S$ of $n$ objects, where
relations are restricted to those of the form:
\begin{center}
    \textit{object $y$ is more similar to object $x$ than object $z$ is.}
\end{center}
Kleindessner \& Von Luxburg \cite{kle} call such a relation a \textit{triplet comparison}.
Readers may be familiar with triplet comparison systems arising from
a metric $d(\cdot, \cdot)$ on $S$, where the relation
means $d(x,y) < d(x,z)$. In sequels to this paper, we will argue
that a broader class of triplet comparison systems (such as those arising from Markov transition probabilities) is natural in 
data science applications.

\subsection{Presentations of ranking systems}\label{s:crs}
There are several equivalent ways to present triplet comparison systems, each
useful in its own context. These include:
\begin{enumerate}
    \item [(a)] Comparator $\prec_x$ places a 
    total order on $S\setminus\{x\}$ for each object $x$, where
    $y \prec_x z$ whenever $r_x(y) < r_x(z)$; \texttt{Comparator} is an interface in the Java language \cite{java}.
    
    \item [(b)] For each $x \in S$, a
    bijection $r_x(\cdot): S \setminus \{x\} \to \{1, 2, \ldots, n-1\}$ 
    assigns ranks to objects other than $x$.
 Collect these bijections into an $n \times n$ \textbf{ranking table} $T$, whose
 $(i, j)$ entry $r_i(j)$ is the rank awarded to object $x_j$ by the
ranking based at object $x_i$, taking $r_i(i)=0$.
The ranking table (e.g. Table \ref{t:3conc10points}) is an efficient data storage device.
    
    \item [(c)] Orient (i.e. assign directions to the edges of) the line graph of the complete graph on the set $S$, as in Figure \ref{f:linek4}, and elaborated in Section \ref{s:orientations}. 
    The simplicial complex of points, edges, and triangles of the line graph provides
    a setting for our main results in Sections \ref{s:asc}, \ref{s:rblformal}, and \ref{s:functor}.
    
\end{enumerate}

\begin{figure}
\caption{\textbf{Orientation of the line graph: }
\textit{
The labelled digraph of Figure \ref{f:k4} is presented in terms of its octahedral line graph.
The arrow $\{v_1, v_4\} \rightarrow \{v_1, v_3\}$ corresponds to 
the fact that $v_4$ is ranked as closer to $v_1$ than $v_3$ is to $v_1$. 
The absence of cycles shows that the ranking system is concordant, i.e
consistent with a total order on the nodes with the source $\{v_1, v_3\}$ 
at the bottom and the sink $\{v_3, v_4\}$ at the top.
Nodes
of the line graph are 0-simplices, edges are
1-simplices, and triangular faces are 2-simplices in Section \ref{s:asc}.}
 [Figure courtesy of Marcus Bishop]
}
\label{f:linek4}
\begin{center}
\begin{tikzpicture}[scale=1]
\draw (0,4) node (12) {$\left\{v_1,v_2\right\}$};
\draw (6,1) node (13) {$\left\{v_1,v_3\right\}$};
\draw (2,-1) node (14) {$\left\{v_1,v_4\right\}$};
\draw (-2,1) node (23) {$\left\{v_2,v_3\right\}$};
\draw (-6,-1) node (24) {$\left\{v_2,v_4\right\}$};
\draw (0,-4) node (34) {$\left\{v_3,v_4\right\}$};
\draw[->] (12) -- (13);
\draw[<-] (12) -- (14);
\draw[<-] (12) -- (23);
\draw[<-] (12) -- (24);
\draw[<-] (13) -- (14);
\draw[<-] (13) -- (23);
\draw[<-] (13) -- (34);
\draw[->] (14) -- (24);
\draw[<-] (14) -- (34);
\draw[->] (23) -- (24);
\draw[<-] (23) -- (34);
\draw[<-] (24) -- (34);
\end{tikzpicture}
\end{center}
\end{figure}
% Claude revision

\subsection{Orientations of the line graph} \label{s:orientations}
The reader is advised to study Figures \ref{f:k4} and \ref{f:linek4} carefully.
The former shows the complete graph $\mathcal{K}_S$, where the label $k$ on a directed edge
$x \rightarrow y$ means that $y$ is ranked $k$ by $x$ among the $n-1$ elements of $S \setminus \{x\}$.
Figure \ref{f:linek4} shows the corresponding line graph $L(\mathcal{K}_S)$, whose vertices (``0-simplices'')
are unordered pairs of objects, and whose edges (``1-simplices'') are pairs such as $\{\{x, y\}, \{x, z\}\}$
with an element in common. Interpret a triplet
comparison $y \prec_x z)$ as an assignment of a direction to a specific edge of
the line graph:
\(
    \{x,y\} \rightarrow \{x,z\}
\). We often abbreviate this to 
\(
    xy \rightarrow xz
\).
To emphasize: {\em the ranks which appear as labels on directed edges of the complete graph 
(Figure \ref{f:k4}) provide directions
of the arcs of the line graph (Figure \ref{f:linek4}).}
The combinatorial input to data science algorithms 
will be a simplicial complex, consisting of a subset of the vertices, edges, and triangular faces 
(``2-simplices'') 
of the line graph $L(\mathcal{K}_S)$, 
after orientations are applied to the 1-simplices. Full details are found in Section \ref{s:asc}.

Our use of simplicial complexes is distinct conceptually
from other uses in network science, such as \cite{her}. \textbf{Warning: } the objects of $S$
are {\em not} the vertices of the simplicial complex.

\subsection{Enforcer triangles and voter triangles} \label{s:enfvot}
A careful look at Figure \ref{f:linek4} shows that there are two kinds of triangular faces:
 the first kind, called 
\textit{enforcers}, take the form
\begin{equation}\label{e:enforcer3}
    \begin{tikzcd}{} & v_j v_m \arrow{dr}{} \\v_i v_m \arrow{ur}{} \arrow{rr}{} && v_k v_m\end{tikzcd}
\end{equation}
where the arrows declare that, under the comparator at $v_m$, the integers in the rank table satisfy $r_m(i) < r_m(j) < r_m(k)$. Enforcer 
 triangles are enforcing transitivity of the total order induced by $v_m$'s ranking. In an enforcer triangle, four distinct objects in $S$ are mentioned.

 There remain the second kind of triangles, called \textit{voters}, where for distinct $i, j, k$,
 \begin{equation}\label{e:voter3}
     \begin{tikzcd}{} & v_i v_j \arrow{dr}{} \\v_i v_k \arrow{ur}{} \arrow{rr}{} && v_j v_k\end{tikzcd} 
 \end{equation}
The arrows declare that the integers in the rank table satisfy
$r_i(k) < r_i(j)$, $r_j(i) < r_j(k)$, and $r_k(i) < r_k(j)$.
A voter triangle mentions only three objects, each of which (here $v_i$, $v_j$, and $v_k$) ``votes'' on
a preference between between the other two. See Figure \ref{f:pushout} for more examples.

\subsection{Three kinds of ranking system}\label{s:3kinds}

\begin{enumerate}
    \item 
A \textbf{ranking system} (without ties) corresponds to an arbitrary ranking table, and to
an orientation of $L(\mathcal{K}_S)$ such that all the enforcer triangles are acyclic.
There are no restrictions on the voter triangles.

\item
A \textbf{concordant ranking system} is an acyclic orientation of $L(\mathcal{K}_S)$.
Baron and Darling \cite{bar} proved that concordant ranking systems are precisely those arising from metrics on $S$.  Figure \ref{f:linek4} shows an example.

\item
Intermediate between (1) and (2) are the \textbf{3-concordant ranking systems},
in which there are no directed 3-cycles. In other words, both enforcer triangles and voter triangles are
acyclic; however, cycles are allowed among closed walks of length four or greater.
It appears that 3-concordant ranking systems are especially appropriate
for rank-based algorithms based on triplet comparisons, and include ranking systems 
not arising from metrics on $S$. Table \ref{t:3conc10points} and Figure \ref{f:counterex} show examples.
\end{enumerate}

\begin{table}[]
    \centering
    \begin{tabular}{c|c|c|c|c|c|c|c|c|c|c|}
 object & 0 & 1 & 2 & 3 & 4 & 5 & 6 & 7 & 8 & 9 \\ \hline
$r_0(\cdot)$ & 0 & 7 & 6 & 4 & 8 & 3 & 1 & 5 & 9 & 2\\ 
$r_1(\cdot)$ &  5 & 0 & 3 & 6 & 7 & 8 & 1 & 4 & 9 & 2\\ 
$r_2(\cdot)$ &  7 & 6 & 0 & 1 & 9 & 5 & 3 & 4 & 8 & 2\\
$r_3(\cdot)$ &  2 & 9 & 5 & 0 & 4 & 6 & 3 & 8 & 7 & 1\\
$r_4(\cdot)$ &  9 & 8 & 7 & 4 & 0 & 3 & 6 & 2 & 1 & 5\\
$r_5(\cdot)$ &  4 & 9 & 3 & 6 & 5 & 0 & 1 & 7 & 8 & 2\\
$r_6(\cdot)$ &  1 & 6 & 5 & 4 & 8 & 3 & 0 & 7 & 9 & 2\\
$r_7(\cdot)$ &  7 & 9 & 1 & 8 & 2 & 3 & 5 & 0 & 4 & 6\\
$r_8(\cdot)$ &  9 & 6 & 7 & 5 & 1 & 3 & 8 & 2 & 0 & 4\\
$r_9(\cdot)$ &  7 & 5 & 4 & 2 & 8 & 3 & 1 & 6 & 9 & 0 \\
\hline
    \end{tabular}
    \caption{\textit{Ranking table for a uniformly random 3-concordant ranking system on ten objects, 
    obtained by sampling 9.5 billion times from all ranking systems (i.e.
    a random permutation of nine objects for each row) until a 3-concordant example
    was obtained. The $(i, j)$ entry $r_i(j)$ is the rank awarded to object $j$ by the
ranking based at object $i$, taking $r_i(i)=0$.
    For example, $r_1(6) = 1$ means that object 1 ranks object 6 as closer to itself than
    any of the objects $\{0,\dots,9\} \setminus \{1,6\}$.
    There are several 4-cycles. Puzzle: find one.}
    }
    \label{t:3conc10points}
\end{table}

\subsection{Natural examples of non-metric ranking systems arise in data science} \label{s:nonmetrixex}
In Section \ref{s:ewdigraph} edge-weighted digraphs will be seen as the prototypical examples
of non-metric ranking systems; see Section \ref{s:migration}. Meanwhile, here are some examples of more specific comparators.

\begin{itemize}

\item 
\textit{Lexicographic comparators in property graphs: } Each object in $S$ has multiple categorical fields.
For example a {\em packet capture} interface may specify fields such as source IP, destination IP,
port, number of packets sent, and time stamp. List these in some order of importance.
Given $x$, decide whether $y$ or $z$ is more similar to $x$ by passing through the
fields in turn, comparing the entries for $y$ and $z$ in that field with the entry for $x$.
The first time $y$ agrees with $x$ while $z$ differs, or vice versa, the tie is broken in
favor of the one which agrees with $x$. The \texttt{Comparator.thenComparing()} method 
in the Java language \cite{java} implements this.

\item \textit{Information theory: } 
$S$ consists of finite probability measures, and $q \prec_p r$ if  
Kullback-Leibler divergences satisfy $D(\mathbf{p} \| \mathbf{q}) <
D(\mathbf{p} \| \mathbf{r})$, where
\(
D(\mathbf{p} \| \mathbf{q}):= \sum_i p_i \log{ (p_i / q_i)}.
\)
This is generally not 3-concordant: see \cite{knnd}.

\item 
$S$ is the state space of a \textit{Markov chain }with transition probability 
$P$, and $y \prec_x z$ if $P_{x,y} > P_{x, z}$.
Random walk on a weighted (hyper)graph gives a concordant ranking system,
as will be explained in a future work.

\item  $S$ is a collection of \textit{senders of messages} on a network during a specific time period. 
Declare that $y \prec_x z$ if $x$ sends more messages to $y$ than to $z$,
or if $x$ sends the same number to each but the last one to $y$ was later than the last one to $z$.
In contrast to undirected graph analysis, messages received by $x$ play no part in its Comparator.
\end{itemize}

Why do we favor 3-concordant ranking systems over concordant ones?
For algorithms such as rank-based linkage (Algorithm \ref{a:rbl}) and 
partitioned local depth \cite{ber} which study triangles
in which a particular 0-simplex (i.e. object pair) is a source,
the extra generality is welcome, and carries no cost because
cycles longer than three do not matter.
On the other hand a 3-cycle appears as a ``hole'' in the data, and is something
we prefer to avoid.

\subsection{Goal and outline of present paper}
The influential paper of Carlsson and Memoli \cite{car} has illuminated
the defects of many clustering schemes applied to finite metric spaces.
In particular these authors postulate that a clustering algorithm should be a functor from a
category of finite metric spaces to a category of partitioned sets (Section \ref{s:functormotivation}).

Section \ref{s:functor} supplies a categorical framework which offers
a functorial perspective on clustering schemes for ranking systems.
This perspective suggests a specific hierarchical clustering scheme called rank-based linkage.
Our main result is Theorem \ref{th:summary}, verifying that 
rank-based linkage is a functor between suitable categories.

As for the rest of the paper:
\begin{enumerate}
    \item[(a)] Users of our algorithm will find all the practical details they need in Section 
\ref{s:linkage}.
    \item[(b)] Section \ref{s:asc} on orientation sheaves on simplicial complexes supplies the structural rationale
for the design of the algorithm.
    \item[(c)] Section \ref{s:rblformal} proves combinatorial properties which establish algorithm correctness.
    \item[(d)] Section \ref{s:functor} derives functorial properties, with full category theory precision.
    \item[(e)] Section \ref{s:gluers} describes how the algorithm behaves when two experimenters merge their data --- an important but neglected aspect of data science.
    \item[(f)] Section \ref{s:sampling} deals with random sampling of 3-concordant ranking systems;
 Section \ref{s:open} in particular draws attention to open problems in sampling and counting these systems.
\end{enumerate}

\subsection{Related literature}
Clustering based on triplet comparisons has been been studied by
Perrot et al.~\cite{per}, who attempt to fit a numerical  ``pairwise similarity matrix''
based on triplet comparisons.
Mandal et al.~\cite{man} consider hierarchical clustering and measure the goodness of dendrograms.
Their goals are different to ours, which can be seen as a scalable redesign of \cite{ber}.
%%%%%%%%%%%%%%%%%%%%%%%%%%%%%%%%%%%%%%%%%%%%%%%%%%%%%%%%%%%%
\section{Hierarchical clustering based on linkage}\label{s:linkage}

\subsection{Need for $K$-nearest-neighbor approximation}
In this section we deal with practical graph computation on 
billion-scale edge sets. The number of voter triangles for
a ranking system on $n$ objects is $O(n^3)$, which is impractical
at this scale; indeed, cubic complexity limits the application
of the related PaLD algorithm \cite{ber}. 
To dodge this burden, we work with a $K$-nearest-neighbor approximation\footnote{
In suitable settings \cite{knnd}, this could be
achieved with $O(n K^2 \log_K{n})$ work by $K$-nearest neighbor descent. 
} 
to the ranking system. This lowers the computational effort from $O(n^3)$ to $O(n K^2)$,
and memory from $O(n^2)$ to $O(n K)$. 
The integer $K \leq n$ should be large enough so that the $K$ nearest neighbors of any object
provide sufficient data for clustering decisions,
but small enough so that linking decisions are ``local''
(these are informal, not mathematical, definitions).

\subsection{Out-ordered digraphs}\label{s:outorddigr}
In Section \ref{s:3kinds} we considered a ranking system on
a set $S$ of $n$ objects where every $x \in S$ applies a total order to $S \setminus \{x\}$.
Relax this total order to a preorder (i.e. a transitive, reflexive relation):
for each $x \in S$ produce a set $\Gamma(x) \subset S \setminus \{x\}$ of no more than $K$ 
nearest neighbors\footnote{
When $S$ is the vertex set of a sparse graph, $\Gamma(x)$ is typically a subset of the graph neighbors of $x$. 
If a vertex $x$ has degree less than $K$, then $|\Gamma(x)| < K$. 
} of $x$
with three properties:
\begin{enumerate}
    \item[(i)] the elements of $\Gamma(x)$ (``friends'' of $x$) are totally ordered by $\prec_x$;
    \item[(ii)] each element of $\Gamma(x)$ precedes every element (``non-friend'') of $S \setminus (\Gamma(x) \cup \{x\})$;
    \item[(iii)] no order relation exists between any pair of elements of $S \setminus (\Gamma(x) \cup \{x\})$.
\end{enumerate}
\begin{defn}\label{d:outorddigr}
A set $S$ together with friend sets $\{\Gamma(x), x \in S\}$ and pre-orders which satisfy (i), (ii), and (iii) above
is called an out-ordered digraph on $S$, abbreviated to $(S,\Gamma,\preceq)$.
\end{defn}

Results later in the paper do not depend on whether the sets $\{\Gamma(x), x \in S\}$ have identical
cardinality, but the upper bound $K$ is used in complexity estimates. 
Call $x$ and $z$ ``mutual friends'' if $z \in \Gamma(x)$ and $x \in \Gamma(z)$. 
Whereas a full ranking of $S \setminus \{x\}$ according to $\prec_x$  for every $x$
would take $O(n^2 \log{n})$ work in total, a ranking of every $\Gamma(x)$ needs only $O(n K \log{K})$ work.
Section \ref{s:catoodigr} will revisit Definition \ref{d:outorddigr} in terms of category theory.

\subsection{Concrete presentation as edge-weighted digraph} \label{s:ewdigraph}
Consider edge-weighted directed graphs on $S$,
whose arcs consist of:
\[
A_{\Gamma}:=\{x \rightarrow y, \, x \in S, \, y \in \Gamma(x)\}.
\]
The weight on the arc $x \rightarrow y$ is denoted $w_x(y) \in \R$, and
the weight function is denoted $w_.(\cdot)$. 
Interpret $w_x(y)$ as a scale-free measure of similarity of $y$ to $x$,
as perceived by $x$.
We further require that, for each $x$, the
values $\{w_x(y),  y \in \Gamma(x)\}$ are distinct real numbers.
Introduce an equivalence relation on such edge-weighted digraphs 
as follows. 

\begin{defn}\label{d:rankequivw}
Edge-weighted digraphs $(S, A_{\Gamma}, w_.(\cdot))$ and $(S', A_{\Gamma}', w'_.(\cdot))$ are rank-equivalent 
if $S = S'$, $\Gamma(x)= \Gamma'(x)$ for all $x \in S$, and if for each $x \in S$,
the total ordering of $\Gamma(x)$ according to $y \mapsto w_x(y)$
and the total ordering according to $y \mapsto w'_x(y)$ coincide.
\end{defn}

%An illustration is given in Figure \ref{f:rankequivw}. \adam{I'm not really sure this is necessary.} 
We omit the proof of the following assertion.

\begin{lem}\label{l:rankequivw}
An out-ordered digraph on $S$ is an equivalence class of rank-equivalent digraphs.
Weighted digraph $(S, A_{\Gamma}, w_.(\cdot))$ belongs to this equivalence class if for all $x \in S$,
\begin{equation}\label{e:rankequivweights}
    y \prec_x y' \Leftrightarrow w_x(y) > w_x(y'), \quad y, y' \in \Gamma(x).
\end{equation}
\end{lem}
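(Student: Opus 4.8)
The plan is to prove the two assertions of Lemma \ref{l:rankequivw} together by unwinding the definitions. The statement has two parts: first, that an out-ordered digraph (Definition \ref{d:outorddigr}) is the same thing as an equivalence class under the rank-equivalence relation of Definition \ref{d:rankequivw}; second, that a weighted digraph $(S, A_\Gamma, w_\cdot(\cdot))$ represents a given out-ordered digraph exactly when the weights satisfy \eqref{e:rankequivweights}. These are really two sides of the same bookkeeping: an out-ordered digraph is, by definition, the data of a total order $\prec_x$ on each $\Gamma(x)$, and rank-equivalence was set up precisely to identify two weightings that induce the same family of total orders.

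First I would fix the ground set: note that rank-equivalence only relates digraphs with the same $S$ and the same friend sets $\Gamma(x)$, so there is nothing to check there, and the whole argument proceeds one vertex $x$ at a time. For a fixed $x$, a weight function assigns distinct reals $\{w_x(y) : y \in \Gamma(x)\}$, and sorting $\Gamma(x)$ in decreasing order of $w_x(\cdot)$ yields a total order on $\Gamma(x)$; call it the order \emph{induced} by $w_x$. Conversely, any total order on a finite set $\Gamma(x)$ is induced by some injective real-valued function (e.g. assign $|\Gamma(x)|, |\Gamma(x)|-1, \dots, 1$ down the chain), so the assignment ``$w_x \mapsto$ induced order'' is a surjection from injective weightings onto total orders, and two weightings land on the same order iff they are rank-equivalent at $x$. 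Taking the product over all $x \in S$, the map sending $(S,A_\Gamma,w_\cdot(\cdot))$ to the family of induced orders descends to a bijection between rank-equivalence classes and families of total orders on the $\Gamma(x)$'s — that is, onto out-ordered digraphs in the sense of Definition \ref{d:outorddigr}. This proves the first sentence.

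For the second sentence, I would simply observe that by the construction just given, the out-ordered digraph attached to a weighted digraph is the one whose comparator $\prec_x$ is the order induced by $w_x$; and ``$\prec_x$ is the order induced by $w_x$'' says exactly that $y \prec_x y'$ holds iff $y$ comes before $y'$ in decreasing-$w_x$ order, i.e. iff $w_x(y) > w_x(y')$, which is \eqref{e:rankequivweights}. Since this is an iff for each pair $y, y' \in \Gamma(x)$ and each $x$, a weighted digraph lies in the equivalence class corresponding to a given out-ordered digraph precisely when \eqref{e:rankequivweights} holds. There is no real obstacle here — the only thing to be careful about is the direction-reversal (larger weight means smaller rank, hence ``closer'', hence earlier under $\prec_x$), which is why the inequality in \eqref{e:rankequivweights} is ``$>$'' rather than ``$<$''; one should also remember to invoke the standing hypothesis that the $w_x(y)$ are distinct, so that the induced order is genuinely total and the equivalence is well defined. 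This is presumably why the authors felt comfortable omitting the proof.
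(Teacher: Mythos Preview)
Your proposal is correct; the paper explicitly omits the proof of this lemma, so there is nothing to compare against, and your unwinding of Definitions \ref{d:outorddigr} and \ref{d:rankequivw}---showing that rank-equivalence classes biject with families of total orders on the $\Gamma(x)$ via the map ``sort by decreasing $w_x$''---is exactly the routine verification the authors had in mind. The only cosmetic suggestion is that your surjectivity witness (assigning $|\Gamma(x)|, |\Gamma(x)|-1, \ldots, 1$) is, up to sign, the canonical example the paper gives immediately after Definition \ref{d:riwdg}.
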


\begin{defn}\label{d:riwdg}
Such a representative $(S, A_{\Gamma}, w_.(\cdot))$
of the equivalence class of an out-ordered digraph on $S$, satisfying (\ref{e:rankequivweights}),
is called a representative out-ordered digraph.
\end{defn}
\textbf{Canonical example: } Take $w_x(y) = -k$ if $y$ is ranked $k$ among the $|\Gamma(x)|$ friends of $x$.
\begin{comment}
\begin{figure}
\caption{\textbf{Pair of rank-equivalent digraphs: }\textit{
Ordering of neighbors 1 to 6 by the weight of the arc from vertex 1 is the same
for both stars.
See Definition \ref{d:rankequivw}.}
}
\label{f:rankequivw}
\begin{center}
\scalebox{0.25}{\includegraphics{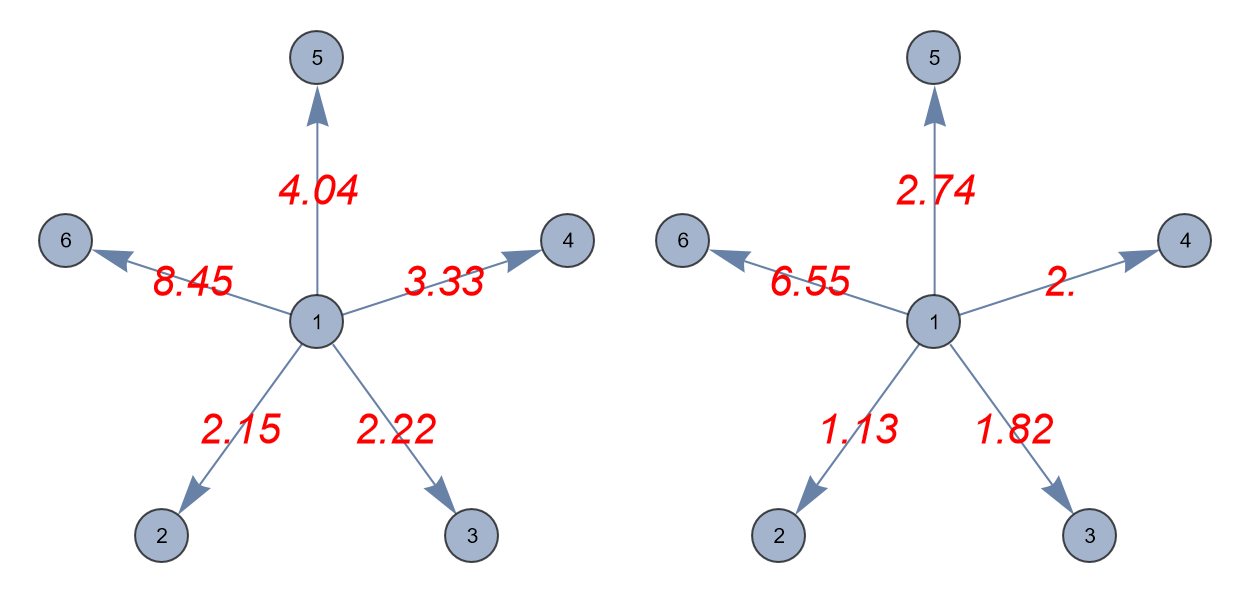} }
\end{center}
\end{figure}
\end{comment}

\textbf{Practical significance: }Lemma \ref{l:rankequivw} allows us to perform rank-based linkage
computations using any directed edge-weighted graph in the correct equivalence class of 
out-ordered digraph on $S$,
provided the weights of arcs are used only for comparison, and not for arithmetic.
Contrast this with graph partitioning based on the combinatorial Laplacian \cite{moh}: in the
latter case, numerical values of edge weights form vectors used for eigenvector
computations, which are sensitive to nonlinear monotonic transformations.

\subsection{Line graph from an out-ordered digraph on $S$ }
There is a ``forgetful view'' of the out-ordered digraph,
 depending only on the friend sets $\{\Gamma(x), x \in S\}$. By abuse of notation, we shall use
 the familiar term ``$K$-NN'' although some friend sets may have fewer than $K$ elements.

\begin{defn}\label{d:unng}
The undirected $\Gamma$-neighbor ($K$-NN) graph $G_{\Gamma}:=(S, \mathcal{U}_{\Gamma})$ consists of all the arcs in 
the out-ordered digraph, stripped of their direction and weight:  
\[
\mathcal{U}_{\Gamma}:= \{\{x, y\}\subset S \mid y \in \Gamma(x) \quad \mbox{or} \quad x \in \Gamma(y) \}
\]
\end{defn}

\begin{defn}\label{d:lingraph}
The line graph $L(G_{\Gamma})$ has vertex set  $ \mathcal{U}_{\Gamma}$ (0-simplices);
the edges (1-simplices) are pairs (such as $\{ \{x, y\}, \{x, z\}\} \subset \mathcal{U}_{\Gamma}$) with an object (here $x$) in common.
\end{defn}

\begin{figure}
\caption{\textbf{Ranking system where vertex $x$ has no friends }\textit{
[Figure courtesy of the referee]}}\label{f:nofriendsofx}
\begin{center}
\begin{tikzpicture}[scale=2]
\draw (0,0) node (1) {$x$};
\draw (0,1) node (4) {$y$};
\draw (-30:1) node (3) {$w$};
\draw (210:1) node (2) {$z$};
\draw[<-,blue] (1) to [bend right=15] node[right] {2} (4);
\draw[<-,orange] (1) to [bend right=15] node[above] {1} (2);
\draw[<-] (1) to [bend right=15] node[below] {3} (3);
\draw[->,blue] (4) to [bend left=15] node[left] {1} (3);
\draw[<-] (4) to [bend left=30] node[right] {1} (3);
\draw[->,blue] (4) to [bend right=15] node[right] {3} (2);
\draw[<-,orange] (4) to [bend right=30] node[left] {2} (2);
\draw[->,orange] (2) to [bend right=15] node[above] {3} (3);
\draw[<-] (2) to [bend right=30] node[below] {2} (3);
\end{tikzpicture}
\end{center}
\end{figure}
There is a weakness in Definition \ref{d:lingraph}. Consider
Figure \ref{f:nofriendsofx}, which differs from the complete directed graph of Figure \ref{f:k4} in that none of $\{y, z, w\}$
is an out-neighbor of $x$. 
In the line graph $L(G_{\Gamma})$ derived from Figure \ref{f:nofriendsofx}, $\{x, y\}$ and  $\{x, z\}$
 are both in $\mathcal{U}_{\Gamma}$ and are regarded as adjacent (they share $x$ in common), for we
have $x \prec_y z$ and $x \prec_z y$. Thus $x \in \Gamma(y)$ and  $x \in \Gamma(z)$, but  $y \notin \Gamma(x)$ and  $z \notin \Gamma(x)$.
An edge such as $\{xy, xz\}$ exists because it was created by other objects’ lists ($y$ and $z$),
but at $x$ there is no local information to order/compare $y$ and $z$. In other words, it seems that no natural
orientation is determined at $x$.
Definition \ref{d:basedlg} is designed to rectify this weakness.

\begin{defn}\label{d:basedlg}
Given an out-ordered digraph $(S,\Gamma,\preceq)$, the based line graph $H_{\Gamma}$ 
is the spanning subgraph pf $L(G_{\Gamma})$ obtained by restricting edges to be only those pairs $\{xy, xz\}$ 
 such that  $\{y, z\} \cap \Gamma(x) \neq \emptyset$. Such an edge is \textbf{based} at $x$.
 In other words 
 \[
V(H_{\Gamma}):= \mathcal{U}_{\Gamma}; \quad \{xy, xz\} \in E(H_{\Gamma}) \Leftrightarrow \{y, z\} \cap \Gamma(x) \neq \emptyset.
 \]
\end{defn}
\begin{rem}
Compare Figure \ref{f:nofriendsofx}: the 1-simplex $\{xy, xz\} \notin E(H_{\Gamma})$ 
since $y \notin \Gamma(x)$ and  $z \notin \Gamma(x)$, so this 1-simplex will not acquire an orientation. 
\end{rem}

\begin{defn}\label{d:orientbasedlg}
Edges (1-simplices) of the based line graph $H_{\Gamma}$ have a canonical orientation:  
\begin{equation}\label{e:1-simplices}
    xy \rightarrow xz \Leftrightarrow \begin{cases}
        (\{y, z\} \subset \Gamma(x) ) \land ( y \prec_x z ) & \mbox{or} \\
        (y \in \Gamma(x) )\land( z \notin \Gamma(x)) &
    \end{cases}
\end{equation}
\end{defn}

\begin{rem}
One option for the orientation occurs when $|\{y, z\} \cap \Gamma(x)| = 1$, and 
comes from the principle that {\em a non-friend of $x$ is less similar to $x$ than is a friend of $x$.}
\end{rem}

\begin{rem}
A computational advantage of using $H_{\Gamma}$ versus the line graph from a complete ranking system
is that the number of orientable 1-simplices is at most $n K (K-1)/2$ elements, instead of $n^3$.
\end{rem}

Definition \ref{d:basedlg} is sufficient preparation for the rank-based linkage algorithm, which we present next. 
The based line graph concept will recur later:
\begin{enumerate}
    \item[(i)] In Sections \ref{s:complex} an \Ref{s:sheafview}, the sheaf of 3-concordant 
    orientations is well-defined because each boundary edge's orientation comes from its unique base.
    \item[(ii)] Categorical treatment of out-ordered digraphs in Lemma \ref{l:alternatef} requires it also.
\end{enumerate}

\subsection{Rank-based linkage algorithm: overview}
If we were dealing with a full ranking system on $S$, not just comparators on friend sets, then
the hierarchical clustering algorithm in this section can be over-simplified as
\begin{center}
    \textit{Link a pair of objects if the corresponding 0-simplex of the line graph is the source\\in at least $t$ voter triangles, for suitable $t$. Then partition according to graph components.}
\end{center}
When we deal with an out-ordered digraph on $S$ (Definition \ref{d:outorddigr}), 
more subtlety is needed; we restrict the 0-simplices, and Definition \ref{d:strictkpert} will determine which voter triangles qualify for inclusion.

\subsection{Data input: edge-weighted digraph}\label{s:datainput}
In our Java implementation \cite{rbl}, the input to the rank-based linkage algorithm
is a flat file, possibly with billions of lines, 
presenting a representative of an out-ordered digraph (Definition \ref{d:riwdg}).
Each line takes the form $(x, y, w_x(y))$, regarded as a directed
edge from object $x$ to object $y \in \Gamma(x)$, with weight $w_x(y)$ of some type
which is \texttt{Comparable}\footnote{
The Java \cite{java} interface \texttt{Comparable} 
is implemented by any type on which there is total ordering, such as integers and floating point numbers.
}.

Let us emphasize that only the relative values of $\{w_x(y), y \in \Gamma(x)\}$ are important. For example, $w_x(y)$ can be taken as minus the rank of $y$ in the ordering of $\Gamma(x)$
given by $\prec_x$. 
%\adam{Maybe this should be mentioned earlier.  There is something to be said for having a canonical representative of every equivalence class in an equivalence relation.}

The cut-down to $K$ nearest neighbors may or may not have been done in advance. In either case,
our Java implementation \cite{rbl} reads the file and stores it in RAM in the data structure of a 
directed edge-weighted graph, with out-degree bounded above by $K$.
Assume henceforward that every object $x \in S$ has degree at least two in the graph $(S, \mathcal{U}_{\Gamma})$.
This condition is enforced \cite{rbl} by 
restricting all subsequent computation to the (undirected) two-core of 
an input digraph, before taking the $\Gamma$-neighbors of each vertex.

\subsection{Canonical rank-based linkage algorithm} \label{s:rbl-steps}

Here are the steps of the rank-based linkage algorithm, based on the graphs of Definitions \ref{d:riwdg} 
and \ref{d:unng}, which are concrete expressions of Definition \ref{d:outorddigr}. 
Its rationale emerges from the functorial viewpoint of Section \ref{s:functor}.

The algorithm will create a new undirected edge-weighted graph 
$(S, \mathcal{L}, \sigma_K)$ called the \textbf{linkage graph} 
(Definition \ref{d:linkage}) whose links $\mathcal{L}$ are the pairs of mutual friends, and whose weight function $\sigma_K: \mathcal{L} \to \Z_+$ is called the \textbf{in-sway}.
We shall describe this first in text, and then in pseudocode as Algorithm \ref{a:rbl}.

\begin{enumerate}
\item[(a)] Determine the set of pairs of mutual friends: these are the links $\mathcal{L}$.

\item[(b)]  Iterate over the links $\mathcal{L}$, possibly in parallel.
    
\item[(c)]  For each pair $\{x, z\} \in \mathcal{L}$ of mutual friends, iterate over $y$ among the common neighbors of $x$ and
of $z$ in the undirected graph $(S, \mathcal{U}_{\Gamma})$. Count those $y$ for which three conditions hold:
\begin{itemize}
    \item $\{x, z\} \cap \Gamma(y)$ is non-empty.
    \item  $y$ is distinct from $x$ and $z$.
    \item $xz$ is the source in the voter triangle $\{xz, xy, yz\}$. The two ways
 $xz$ can be the source\footnote{
All the 1-simplices have a well-defined direction, as we shall see in Definition \ref{d:strictkpert}.
This triangle must have a source and a sink if the out-ordered digraph is 3-concordant.   
    } are:
    \begin{equation}\label{e:source}
  \begin{tikzcd}{} &  \arrow{dl}{} x z \arrow{dr}{} \\x y  \arrow{rr}{} &&  y z\end{tikzcd} 
  \quad \mbox {or} \quad
  \begin{tikzcd}{} &   \arrow{dl}{} x z \arrow{dr}{} \\x y   && \arrow{ll}{}  y z\end{tikzcd}         
    \end{equation}

    \end{itemize}
    
    \item[(d)] The count obtained in (c) is the in-sway $\sigma_K(\{x,  z\})$.     
    
    \item[(e)] At the conclusion of the iteration over $\mathcal{L}$,
    the linkage graph $(S, \mathcal{L}, \sigma_K)$ is finished.
    It supplies a monotone decreasing sequence of undirected subgraphs 
    \((S, \mathcal{L}_t)_{t = 0, 1, 2, \ldots}\)
    where $\mathcal{L}_t$ is defined to be the set of links whose in-sway is at least $t$.
    
    \item[(f)] 
    The hierarchical clustering scheme (Definition \ref{d:rbl}) 
    consist of partitions into graph components of $(S, \mathcal{L}_t)_{t > 0}$. Decreasing $t$ adds more 
    links\footnote{Parameters $K$ and $t$ are analogous to parameters of the distance-based method DBSCAN \cite{est}.},
    and may collapse components together.

    \item[(g)] If  $\max_t |\mathcal{L}_t| \geq n$, the \textbf{critical in-sway} $t_c$ is the largest value of $t$ for which
  $|\mathcal{L}_t| \geq n$. The sub-critical clustering 
    consists of the graph components of $(S, \mathcal{L}_{t_c + 1})$; it is recommended as a starting point
    for examining the collection of partitions.
   
\end{enumerate}

%%%%%%%%%%%%%%%%%%%%%%%%%%%%%%%%%%%%%%%%%%%%
\begin{algorithm}
  \caption{Linkage graph from out-ordered digraph}\label{a:rbl}
  \begin{algorithmic}[1]
\Procedure{Mutual friend list}{$(S, A_{\Gamma})$}
      \State $\mathcal{L} := \emptyset$
      \Comment{Empty set of unordered pairs of objects}
      \For{$x \in S$}
        \For{$z \in \Gamma(x)$}
        \If{$x \in \Gamma(z)$}
        \State $\mathcal{L} := \mathcal{L} \cup \{ \{x, z\} \}$
        \Comment{Include pair in set of mutual friends}
        \EndIf
         \EndFor
    \EndFor
     \State \textbf{return: set of mutual friends $\mathcal{L}$} 
\EndProcedure
%%%%%%%%%%%%%%%%
    \Procedure{In-sway}{$(S, A_{\Gamma}, w_{\cdot}(\cdot))$, $(S,  \mathcal{U}_{\Gamma})$}
    \Comment{Invoke \textsc{Mutual friend list} first}
\For{$ \{x, z\} \in \mathcal{L}$}
\Comment{Iterate through pairs of mutual friends}
\State Set $\sigma_K(\{x,  z\}):=0$
\Comment{In-sway initialization}
\For{$y$ such that $\{x, y\} \in \mathcal{U}_{\Gamma}$ and $\{z, y\} \in \mathcal{U}_{\Gamma}$}
    \Comment{See Definition \ref{d:unng}}
\If{$\{x, z\} \cap \Gamma(y) \neq \emptyset$ and $y \notin \{x, z\}$}
\Comment{Reason: see Definition \ref{d:strictkpert}}
              \If{\textsc{FirstElementIsSource}($\{xz, yz, xy\}$)}   
                      \Comment{Predicate below}
                        \State Add 1 to $\sigma_K(\{x,  z\})$
                    \EndIf                  
                 \EndIf                        
            \EndFor
        \EndFor
      \State \textbf{return: } Linkage graph $(S, \mathcal{L}, \sigma_K)$\\
    \Comment{Correctness proved in Proposition \ref{p:rblcorrectness}}
        \EndProcedure
              \Comment{$\mathcal{L}_t:=\{x, z\}, \,\sigma_K(\{x,  z\}) \geq t\}$}
    \Procedure{FirstElementIsSource}{$\{xz, yz, xy\}$}
    \Comment{See Lemma \ref{l:ksway}}
    \If{$(y \notin \Gamma(x) \quad \mbox{or} \quad z \prec_x y) \quad$ and 
$\quad (y \notin \Gamma(z) \quad \mbox{or} \quad x \prec_z y)$ 
    }
    \State \textbf{return} True
    \Else
    \State \textbf{return} False
    \EndIf
    \EndProcedure
  \end{algorithmic}
\end{algorithm}

\subsection{Design and typical applications of the algorithm}
\subsubsection{Does the ranking system need to be 3-concordant?}
The in-sway computation (\ref{e:source}) requires a decision about the source of each voter
triangle. If there were a 3-cycle in the line graph, the corresponding voter triangle would be
``missing data''. Thus 3-concordance is preferred, but a few exceptions 
(such as may occur in the Kullback-Leibler divergence example of Section \ref{s:nonmetrixex}) merely
induce some bias, and may be tolerated.
\subsection{Does every object need to have exactly $K$ friends?}
If the friends $\Gamma(y)$ are a subset of the neighbors of $y$ in a sparse graph, then
it will sometimes happen that $|\Gamma(y)| < K$. This makes such a $y$ ``less influential" than an object with $K$ neighbors, in
that it participates in fewer voter triangles; this does not upset the algorithm much. On the other hand
if $K = 10$ but we allow one specific $y$ with $|\Gamma(y)| = 50$, then this $y$ will be ``excessively influential'',
in the sense that in (c) it will participate in many more voter triangles than a typical object does.
This would introduce a more serious bias, which we prefer to avoid.
\subsection{Typical input format}
Our initial input takes a list of object pairs with weight as input: this is interpreted
as edge data for an edge-weighted graph which may be
either directed or undirected, according to a run time parameter.
\subsection{Compute effort}
Table \ref{tab:runtimes} indicates that
setting up graph data structures in RAM takes more time than running Algorithm \ref{a:rbl},
which is an $O(n K^2)$ counting operation, executable in multi-core architectures using parallel streams.

\subsection{Computational examples}
\subsubsection{Ten point set}

\begin{figure}
\caption{\textbf{Rank-based linkage on Table \ref{t:3conc10points}:}
\textit{Critical-in-sway is five. Links, labelled by in-sway, are solid for in-sway greater than five. There are two singletons,
and clusters of sizes three and five in the sub-critical clustering.}
}
\label{f:rbl10pts}
\begin{center}

\scalebox{0.4}{\includegraphics{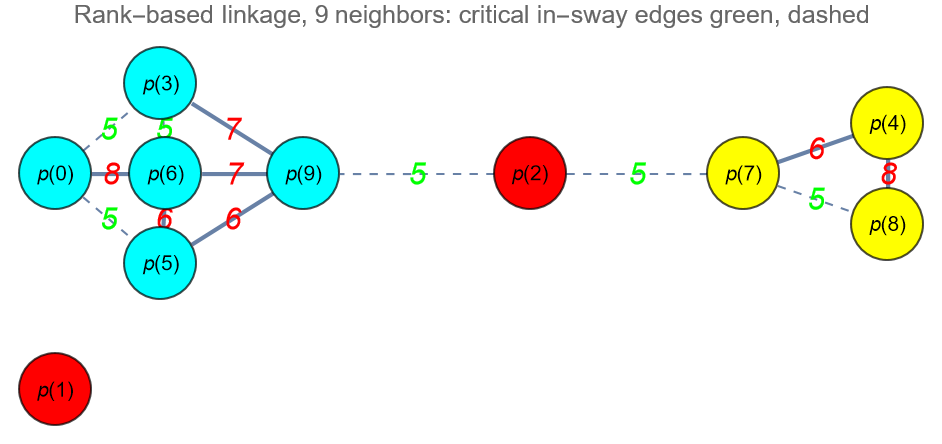} }
\end{center}
\end{figure}

Figure \ref{f:rbl10pts} shows rank based linkage on the ten objects 
whose ranking system appears in Table \ref{t:3conc10points},
with $\Gamma(x):=S \setminus \{x\}$ for each $x$. Observe that the
pairs $\{p(0), p(6)\}$ and $\{p(4), p(8)\}$ both
exhibit the maximum possible value of in-sway, which is 8.
Verify in Table \ref{t:3conc10points} that each of these
0-simplices is the source in all 8 of the voter triangles
in which it occurs. The sub-critical clustering gives
4 clusters, namely two singletons,
and clusters of sizes three and five, respectively.

\begin{figure}
\caption{
\textbf{Migration flows:} \textit{Objects are 199 countries in the 2-core.}
\textit{WHENCE graph: country $x$ compares countries $y$ and $z$ according to how many immigrants it receives from each.}
\textit{WHITHER graph: country $x$ compares countries $y$ and $z$ according to how many
citizens of $x$ have migrated to $y$ or $z$.
Both linkage graphs are restricted to links of in-sway at least 25. 
Besides singletons, WHENCE graph has two components, while WHITHER has only one.
Links with n-sway at least 100 are highlighted. 
Except for \texttt{CA} $\leftrightarrow$ \texttt{GB} on the right, all of these have 
\texttt{US} as an endpoint. }}

\label{f:migration}
\begin{center}
\scalebox{0.34}{\includegraphics{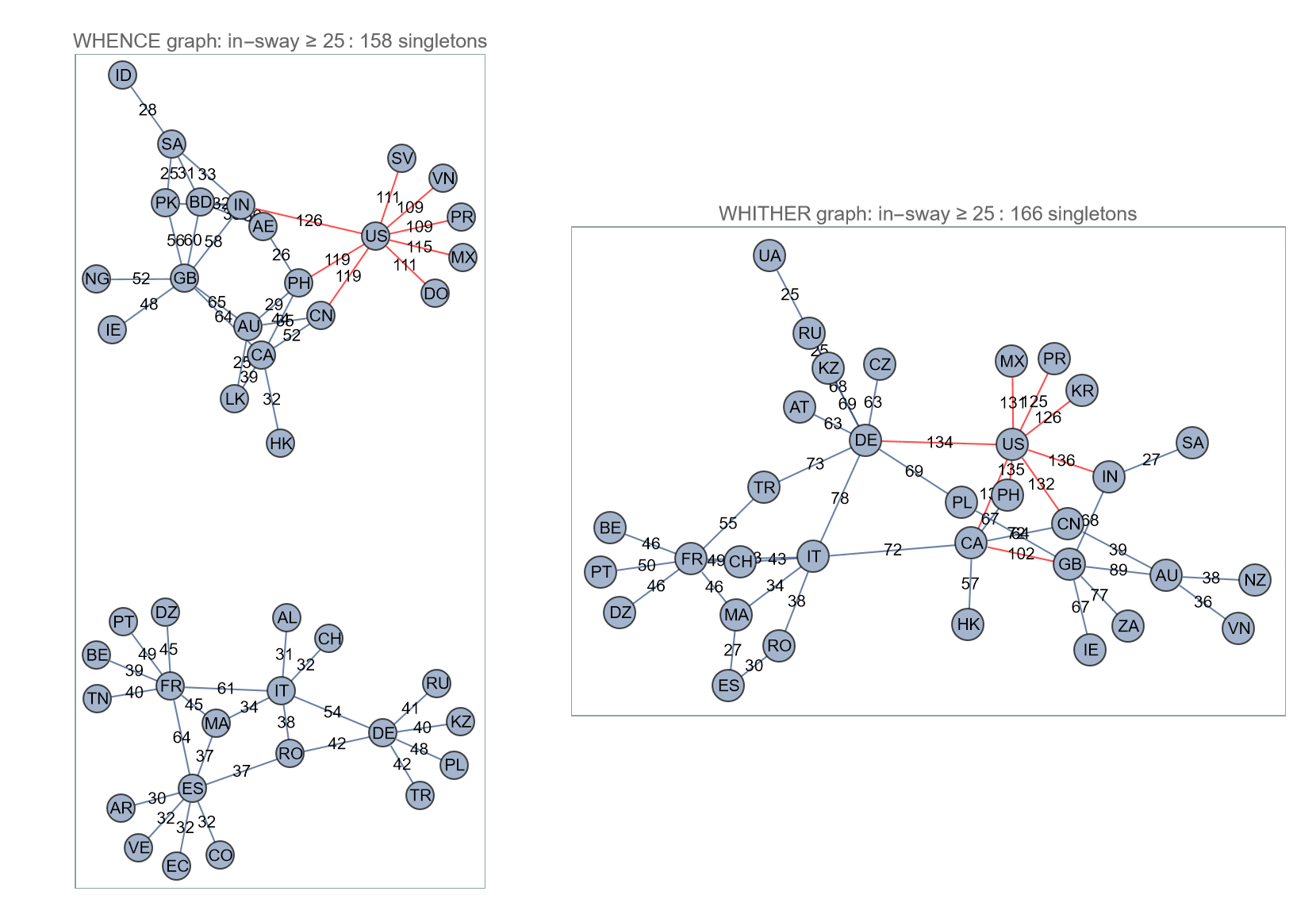} }
\end{center}
\end{figure}

\subsubsection{Digraph example: migration flows among 200 countries} \label{s:migration}
Azose \& Raftery \cite{azo} build a statistical model, based on OECD data, to estimate how many persons
who lived in country $x$ in 2010 were living in country $y$ in 2015. They take account of both migration and
reverse migration, as distinct from net migration often seen in official figures. Starting from their $200 \times 200$
table of flows in  \cite{azo}, we selected flows of size at least 1000 persons, and built a sparse directed weighted graph
with 3866 edges. Our Java code \cite{rbl} found the 2-core (199 countries), then 8 nearest neighbors of each country under 
each of two comparators:
\begin{enumerate}
  \item[(a)]
  WHENCE comparator: $y \prec_x z$ if flows from country $y$ to country $x$ exceed flows from country $z$ to $x$.
  \textit{Whence are immigrants coming from?}
    \item[(b)]
    WHITHER comparator: $y \prec_x z$ if flows from country $x$ to country $y$ exceed flows from $x$ to 
country $z$.\textit{Whither are our citizens emigrating?}
\end{enumerate}
Using each comparator, a portion of the linkage graph is shown in Figure \ref{f:migration}.
Country digraphs are listed at \url{immigration-usa.com/country_digraphs.html}.

Switching between a source-base comparator and a target-based comparator is an option
for rank-based linkage on any weighted digraph.

\subsubsection{Run times on larger graphs}
Table \ref{tab:runtimes} shows some run times for our initial un-optimized Java implementation \cite{rbl}, with
JDK 19 on dual Xeon E3-2690 (48 logical cores).
\begin{table}[]
    \centering
    \begin{tabular}{c|c|c|c|c|c}
       $S$  &  $K$ & Digraph prep. & Rank-based linkage & Components & RAM \\ \hline
       1M.  &  8   & 16 secs & 5 secs & 5 secs & 8 GB\\ \hline
    10M.    &  8   & 228 secs & 44 secs & 49 secs & 78 GB\\ \hline
    \end{tabular}
    \caption{\textit{
Albert-Barabasi random graphs with parameters $(|S|, 2)$ were generated, and uniform random weights assigned to
the 2M and 20M edges, respectively, for the cases shown. The fourth column is consistent with rank-based linkage run time scaling linear in $|S|$, for fixed $K$. The final column confirms RAM usage scales in the same way.
    }  
    }
    \label{tab:runtimes}
\end{table}

%%%%%%%%%%%%%%%%%%%%%%%%%%%%%%%%%%%%%%%%%%%
\section{Sheaves on a simplicial complex} \label{s:asc}

\subsection{Geometric intuition}

Sheaves \cite{cur} on a simplicial complex supply a toolbox for precise description of rank-based linkage. 
Figures \ref{f:k4} and \ref{f:linek4} supply geometric intuition. The six points, twelve edges,
and eight triangles of the octahedron shown in Figure \ref{f:linek4}
are the  0-cells, 1-cells, and 2-cells, respectively of an
abstract simplicial complex. The 3-concordant ranking system shown in Figure \ref{f:k4} leads to an orientation of the 1-cells such that none of the
2-cells is a directed 3-cycle. This orientation of the 1-cells provides
a toy example of an \textit{oriented simplicial complex}.

Including all the points, edges, and triangles of the line graph
of the complete graph of $S$ leads to an excessive computational
load for large $n$. The next section outlines a parsimonious approach.

\subsection{A 2-dimensional simplicial complex based on a line graph}\label{s:complex}

Take a graph $G$ on the set $S$, 
and then take a subgraph $H$ of the line graph $L(G)$,
whose vertex set $V(H)$ is all the edges of $G$.
There is a natural way to build a two-dimensional abstract simplicial complex
whose 0-cells, 1-cells, and 2-cells are the points,
edges, and triangles, respectively, of $H$.
To be precise, $\Delta(H) = \Delta_0 \cup \Delta_1 \cup \Delta_2$
is defined as follows:

\begin{enumerate}
    \item[0:] The 0-cells or 0-simplices $\Delta_0$ of $\Delta(H)$ are the edges of $G$. A typical element of $\{x, y\} \in \Delta_0$ will be abbreviated to $xy$, for distinct adjacent $x, y \in S$.

    \item[1:] The 1-cells or 1-simplices $\Delta_1$ are
     the edges of $H$,
i.e. pairs $\{xy, xz\} \in H$ for distinct 0-cells $xy$ and $xz$
    \footnote{
    Note that $x_1 x_2$ and $x_3 x_4$ are non-adjacent in the line graph,
    for distinct $\{x_i\}$.
    }. 
    
    \item[2:] The 2-cells or 2-simplices $\Delta_2$ of $\Delta$ are
    all triples of pairwise intersecting $1$-cells. Thus $2$-cells
    are precisely the triangles of $H$.
   There are two types of $2$-cells:
    \begin{itemize}
     \item $\{xy, yz, xz\}$ for distinct $x, y, z$ in $S$ (``voter triangles'' as in (\ref{e:voter3}).
        \item $\{vx, vy, vz\}$ for distinct $v, x, y, z$ in $S$ (``enforcer triangles''
        as in (\ref{e:enforcer3}).
    \end{itemize}

\end{enumerate}

    When $G$ is the complete graph $\mathcal{K}_S$ on $S$, 
    and $H$ is the line graph $L(\mathcal{K}_S)$,
    we obtain the points,
edges, and triangles, respectively, of this line graph.
This complex imposes at least a cubic computational burden, 
because $\Delta_1$ has $n(n-1)(n-2)/2$ elements.

Recall that an orientation of a graph means an assignment of a direction to each edge.
An orientation of the simplicial complex $\Delta(H)$ means
an orientation of $H$.

We repeat, in simplicial complex notation, the definitions of Section \ref{s:crs}.

\begin{defn}\label{d:3-conc}
A ranking system on $S$ is an orientation of the $1$-cells of $\Delta(L(\mathcal{K}_S))$ (i.e. edges of the line graph)
such that 2-cells of the form $\{vx, vy, vz\}$ are acyclic, for distinct $v, x, y, z$ in $S$. If 2-cells of the form $\{vx, vy, xy\}$ are acyclic also, call the ranking system 3-concordant. 
Call it concordant if the orientation of the entire line graph
is acyclic.
\end{defn}

Figures \ref{f:linek4} and \ref{f:counterex} illustrate 
concordant and 3-concordant ranking systems, respectively.

\subsection{Oriented simplicial complex associated with out-ordered digraph} \label{s:scprt}
Some care is needed when adapting the pattern of Section \ref{s:complex} to the case of
an out-ordered digraph $(S,\Gamma,\preceq)$  (Definition \ref{d:outorddigr}).
Take the graph $G$ of Section \ref{s:complex} to be
the undirected $K$-NN graph (Definition \ref{d:unng}),
denoted $(S, \mathcal{U}_{\Gamma})$.
Recall that the line graph $L(G_{\Gamma})$ of Definition \ref{d:lingraph} is deprecated because some of its 1-simplices may be unorientable.
Focus instead on the \textit{based line graph} $H_{\Gamma}$ (Definition \ref{d:basedlg}) designed to avoid this problem.
 
Define an oriented simplicial complex 
\begin{equation}\label{e:ascknng}
  \Delta (H_{\Gamma}):=\Delta_0  \cup \Delta_1  \cup \Delta_2  
\end{equation}
whose simplices of dimension 0, 1, and 2 are as follows:
\begin{enumerate}

    \item[0:] The 0-cells $\Delta_0 = \mathcal{U}_{\Gamma}$,
   i.e. edges of the undirected $K$-NN graph.
    
    \item[1:] The 1-cells $\Delta_1$ are
     pairs $\{xy, xz\} \in \binom{\mathcal{U}_{\Gamma}}{2}$ 
such that $\{y, z\} \cap \Gamma(x) \neq \emptyset$,
with an orientation acquired from the 
out-ordered digraph $(S,\Gamma,\preceq)$ using the formula (\ref{e:1-simplices}) when $|\{y, z\} \cap \Gamma(x)| = 1$.

    \item[2:] The 2-cells $\Delta_2$ are triples of 0-cells,   
    all of whose 1-faces are in $\Delta_1$ (see Definition \ref{d:strictkpert}
    for consequences). If the ranking system
    is 3-concordant, none of these 2-cells is cyclic according
    to the orientation of its 1-faces.
    
\end{enumerate}
Rank-based linkage computations (Algorithm \ref{a:rbl}) make reference to this oriented simplicial complex,
without explicitly constructing it.

\begin{defn}\label{d:ood3concord}
 An out-ordered digraph $(S,\Gamma,\preceq)$ is called 3-concordant (resp. concordant) if the induced
 orientation of $\Delta(H_{\Gamma})$ is 3-concordant (resp. concordant).
\end{defn}

\subsection{Sheaf perspective}\label{s:sheafview}
Let $\Lambda$ be any abstract simplicial complex
(for example $\Delta(H_{\Gamma})$ above,
but the restriction to 0-cells, 1-cells, and 2-cells seems unnecessary).
We recall the standard definition of a topology on $\Lambda$, 
as presented by Steiner \cite{ste} and Stong \cite{sto}; we employ the version in which
the lower sets are open, rather than the upper sets.

\begin{comment}
\adam{There must be a convenient reference.  All I have is Lee's book on
topological manifolds and Hatcher's book on algebraic topology, neither of
which gets into this.  I found an old paper of McCord online.
Also, we seem to be using the version where the lower sets are open, rather
than the upper sets, which may be more popular nowadays.}
\end{comment}
\begin{defn}\label{d:alexandrov-topology}\cite{mccord}
The {\em Alexandrov topology} on $\Lambda$ is the topology whose open sets
are the sets of simplices contained in a fixed $x \in \Lambda$.
\end{defn}
(More generally the Alexandrov topology is defined on an arbitrary partially
ordered set $S$ by taking the $\{x \in S: x \le x_0\}$ to be the basic
open sets.)

\begin{ex}\label{ex:2-cell}
Let us consider the $2$-cell $\{a, b, c\}$, a 
simplicial complex with $7$ simplices.  Rather than write these simplices
as $\{a,b\}$, etc., we abbreviate this to $ab$.  The basic open set
associated to the $2$-cell is the entire space
\[
U:=\{a, b, c, ab, bc, ac, abc\}
\]
(we have not introduced any orientation so there is no distinction between
$ac$ and $ca$).
Contained within $U$ are the basic open sets $\{a, b, ab\}$,
$\{a, c, ac\}$, and $\{b, c, bc\}$, as well as the singletons
$\{a\},\{b\},\{c\}$, but these do not cover $U$.  Note that the point
$abc$ is dense in the space; in general there is a unique minimal open cover which
consists of the closures of the maximal simplices.
\end{ex}

\begin{defn}\label{d:loop}
Let $\Lambda$ be an abstract simplicial complex.
A {\em $k$-loop} is a set of simplices
$\{a_0 a_1, a_1 a_2, \ldots, a_{k-1} a_0\}$ of $\Lambda$,
where the $a_i$ are distinct $0$-simplices.
Equivalently, it is the image of an injective map from the standard
$k$-loop, which is the simplicial complex whose $0$-cells are 
$\{0,1,\dots,k-1\}$ and whose $1$-cells are $\{01,12,\dots,(k-1)0\}$.
\end{defn}
For example, given any face $abc$, the set $\{ab, bc, ca\}$ is a $3$-loop.

\begin{defn}\label{d:orientation-sheaf}
Let $\Lambda$ be a simplicial complex, with $\Lambda_1$ as its 1-simplices.
An {\em orientation} on a $1$-simplex $\lambda$ is one of the $2$ orderings
of its set of vertices, and the set of orientations on $\lambda$ will be
denoted $\pi(\lambda)$.
The {\em orientation sheaf} $\OO$ on $\Lambda$ is the sheaf of sets 
defined by
\[
{\OO}(U) = \prod_{\lambda \in U \cap \Lambda_1} \pi(\lambda),
\]
with restriction maps given by the natural maps from larger to smaller
cartesian products.
\end{defn}

\begin{prop}\label{prop:orientations-are-sheaves}
The orientation sheaf just defined is a flasque sheaf of sets.
\end{prop}
% Richard - Ops - you are right, of course! \adam{I thought that separation was part of the definition of a sheaf?}

\begin{proof} Let $U \subseteq \Lambda$ be an open subset and $\{V_i\}$
an open cover; let $U_1$ be the set of $1$-simplices of $U$.  Fix 
$u \in U_1$.  Then $u$ must belong to at least one of the $V_i$.
Let $s_i \in {\OO}(V_i)$ be a collection of sections
that agree on the $V_i \cap V_j$.  Then all $s_i$ with $u \in V_i$
must take the same value on $u$, say $\rho(u)$.  Define $s \in {\OO}
(U)$ to be the section that takes $u$ to $\rho(u)$ whenever $u \in V_i$.  It is clear that
$s|_{V_i} = s_i$ for all $i$ and that $s$ is the unique section with this
property, so $\OO$ is a sheaf, and it is flasque from the definition.
\end{proof}

\begin{comment}
\begin{defn}\label{d:careful-sheaf}
Let $\mcf'_3$ be the presheaf such that $\mcf'_3(U)$ is the set of orientations of the 
$1$-simplices of $U$ that are $3$-acyclic in the strong sense that $U$ does not contain vertices $a, b, c$ and
edges $a \rightarrow b$, $b \rightarrow c$, $c \rightarrow a$ oriented as indicated, even if the $2$-simplex
$abc$ does not belong to $U$.  
More generally, let $\mcf'_m(U)$ be the set of orientations
of the 1-simplices of $U$ such that all $k$-loops of $1$-simplices of
$U$ are acyclic, for all $k = 3,4,  \ldots, m$.  For $m = \infty$ we interpret this notation as the set of concordant orientations; for $m = 2$, as the set of all orientations.
\end{defn}
\end{comment}

\begin{defn}\label{d:k-acyclic}
Let $s \in \OO(U)$ be a section of the orientation sheaf over an
open set, and let $k \in \N \cup \{\infty\}$.  
If there is no $m$-loop $a_0a_1, \dots, a_{m-1}a_0$ in $U$ for any 
$3 \le m \le k$ such that $s$ assigns to each $a_ia_{i+1}$
(including $a_{m-1}a_0$)
the order $a_i < a_{i+1}$, then $s$ is {\em $k$-acyclic}.
\end{defn}

Note that the $m$-loops are not required to bound a $2$-simplex or a 
$2$-dimensional simplicial complex.  Also, the condition is vacuous if
$k = 2$.

\begin{defn}[Strong presheaf]\label{d:careful-sheaf}
For $k \ge 2$ as above and any simplicial complex,
define $\mcf'_k$ to be the subpresheaf of the orientation sheaf such that
$\mcf'_k(U)$ is the set of $k$-acyclic orientations of $U$.
\end{defn}

In other words $\mcf'_k(U)$ means the set of orientations so that no directed cycle of length $\leq k$ occurs anywhere in 
the 1-simplices of $U$.
A subpresheaf of a separated presheaf is separated; by definition
$\mcf'_k \subseteq \OO(\Lambda)$ which is separated, so this applies to $\mcf'_k$.
We give some examples of this construction.

\begin{ex}
    \begin{enumerate}
        \item[(a)] Let $\Lambda$ be the $2$-cell $abc$.  Then
        $\#(\mcf'_3(\Lambda)) = 6$.  Indeed, of the $8$ elements of
        $\OO(\Lambda)$, all are sections of $\mcf'_3(\Lambda)$ except for
        $(ab,ca,bc)$ and $(ba,ac,cb)$.
        \item[(b)] If $\Delta = \Delta(L(\mathcal{K}_S))$ as in Definition \ref{d:3-conc}, then $\mcf'_3(\Delta)$ (resp. $\mcf'_n(\Delta)$) 
        coincides with the 3-concordant  (resp. concordant) ranking systems on $S$.
        \item[(c)] For all $k$ and all $\Lambda$, the set
        $\mcf'_k(\emptyset)$ has $1$ element.
    \end{enumerate}
\end{ex}

\begin{comment}
\begin{defn}\label{d:careful-sheaf}
Let $k \in \N \cup \{\infty\}$.  We
define $\mcf'_k$ to be the 
$U \mapsto \mcf'_m(U)$ is a separated presheaf with respect to the
topology on $\Lambda$, because the restriction map
$ \mcf'_m(W) \to  \mcf'_m(U)$ is the composite
$\mcf'_m(W) \to \mcf'_m(V) \to \mcf'_m(U)$
when $U \subseteq V \subseteq W$ and two sections that agree on an
open cover are equal.
Examples of this construction include:

\begin{enumerate}
    \item[(a)] If $U$ is the basic open set associated with a 2-cell, there are six elements in
    $\mcf'_3(U)$; out of $2^3 = 8$ orientations of the
    three 1-simplices, all but two are acyclic.
    
    \item[(b)] 
    If $\Delta = \Delta(L(\mathcal{K}_S))$ as in Definition \ref{d:3-conc}, then
    $\mcf'_3(\Delta)$ (resp. $\mcf'_n(\Delta)$) coincides with the 3-concordant 
    (resp. concordant) ranking systems on $S$.

 \item[(c)] $\mcf'_n(\emptyset)$ is formally a 1-element set for all $n$.
\end{enumerate}
\end{defn}
\end{comment}

\begin{rem}\label{r:not-sheaf}
It is easy to see that $\mcf'_k$ is not a sheaf for $k \ge 3$, by considering a simplicial complex with
vertices $x_0, \dots, x_{k-1}$ and edges $E_i: x_i x_{i+1}$ for $0 \le i < k$ with indices read mod $k$.  If the edges are oriented by $x_i \to x_{i+1}$ to form a $k$-loop, then the obvious sections on the open sets containing $E_i$ and its endpoints do not glue to any section of $\mcf'_k$.  Other 
natural examples appear in Figure~\ref{f:noglue3} for $k = 3$ and in Figure~\ref{f:sheafcounterex} for $k = 4$.  
\end{rem}
%\adam{This figure does correctly illustrate that $\mcf'_4$ is not a sheaf, but in a more complicated way than described in the previous sentence.}
%% k=3 DIAGRAM %%%%%%%%%%%%%%%%%%%%%%%%%%%%%
\begin{figure}[t]
\caption{\textbf{$\mcf_3'$ is not a sheaf: }
\textit{The seven (locally triangle-acyclic) sections agree when
restricted, and glue to the whole, but the glued section has an outer directed
3-cycle (in red), which breaks the sheaf condition for $\mcf_3'$. Four of the 2-cells are enforcers, while four (including red) are voters.
(Courtesy of the referee)}
}
\label{f:noglue3}
\begin{center}
\begin{tikzpicture}[
    node distance=3cm and 4cm,
    every node/.style={font=\large\itshape, color=gray},
    every path/.style={->, >=stealth, thick, draw}
]
% Define nodes
\node (cd) at (0,0) {cd};
\node (bc) at (8,0) {bc};
\node (ac) at (4,-2.5) {ac};
\node (ad) at (2,-5) {ad};
\node (ab) at (6,-5) {ab};
\node (db) at (4,-7.5) {db};

% Draw straight arrows (gray)
\draw[gray] (cd) -- (ac);
\draw[gray] (cd) -- (ad);
\draw[gray] (cd) -- (ad);
\draw[gray] (bc) -- (ac);
\draw[gray] (bc) -- (ab);
\draw[gray] (ad) -- (ac);
\draw[gray] (ab) -- (ac);
\draw[gray] (ab) -- (ad);
\draw[gray] (db) -- (ad);
\draw[gray] (db) -- (ab);

% Draw curved arrows (red)
\draw[red, bend right=30] (bc) to (cd);
\draw[red, bend right=30] (cd) to (db);
\draw[red, bend right=30] (db) to (bc);

\end{tikzpicture}
\end{center}
\end{figure}

%\begin{figure}
%\caption{\textbf{$\mcf'_3$ is not a sheaf: }
%\textit{An open set U consists of 2-simplices $\{axy, byz, cxz\}$, and all included 0- and 1-faces.
%This is shown as three triangles, each pair having a 0-simplex in common.
%All three 2-simplices are acyclic, but
%$x\rightarrow y\rightarrow  z\rightarrow  x$ is a 3-cycle,
%which is allowed because $xyz$ is NOT one of the 2-simplices.
%In that case U contains a 3-cycle, but none of its open subsets do; so gluing fails.
%The failure of the gluing axiom for
%the pre-sheaf $\mcf'_3$ shows it cannot be a sheaf.
%}
%}
%\label{f:sheaf3counterex}
%\begin{center}
%\scalebox{0.35}{\includegraphics{IMAGES/3-concordant-sheaf-counterexample.png} }
%\end{center}
%\end{figure}

\subsection{3-concordant ranking systems form a sheaf}
We now show how to modify the definition of $\mcf'_3$ so as to define a sheaf $\mcf_3$.
The reader who has grasped Algorithm \ref{a:rbl} but wonders whether the sheaf property is
an unnecessary luxury may look ahead to Section \ref{s:sheafvalue}.
Unfortunately it does not appear that there is any analogous modification of $\mcf'_k$ that would allow
us to define a sheaf of $k$-concordant orientations on a simplicial complex of dimension $2$ when $k \geq 4$.

A 2-cell $\tau \in U$ has a boundary $\partial \tau$ consisting of three 1-cells. Denote by $\text{Acyc}(\partial \tau)$ the
six acyclic orientations of the boundary $\partial \tau$ (a subset of a subproduct of $\OO(U)$).

\begin{defn}[Weak subsheaf]\label{d:presheafcyclefree}
Define the presheaf $\mcf_3 \subseteq \OO$ by
\[
\mcf_3(U) = \left\{
\alpha \in \OO(U)\mid \forall \tau \in \text{2-simplices of } \, U, \quad
\alpha|_{\partial \tau} \in \text{Acyc}(\partial \tau)
\right\}
\]
\end{defn}

\begin{ex}\label{ex:clarify-sheaf} 
Definition \ref{d:presheafcyclefree} does
not impose any condition on edges $ab, bc, ac$ if $abc$ is not a $2$-simplex of $U$.
For example, suppose that $U_1$ is the
$1$-skeleton of a $2$-simplex $U$.  Then $\mcf_3(U)$ and $\mcf'_3(U)$ agree, since they
equal the $6$-element
set of acyclic orientations of the $1$-faces of $U$.
However, $\mcf_3(U_1)$ and $\mcf'_3(U_1)$ differ: $\mcf'_3(U_1) = \mcf'_3(U)$, while
$\mcf_3(U_1)$ is the $8$-element set of all orientations of the $1$-faces of $U$.
\end{ex}

\begin{ex}\label{ex:8-triangles} 
Figure \ref{f:noglue3} does not exhibit a gluing failure for $\mcf_3(U)$ if the 2-simplex $\{bc, cd, db\}$ (shown in red) is not included in $U$.
\end{ex}

\begin{comment}
 Thanks to the referee for the observation that $\mcf_3$ is the equalizer subsheaf 
 \[
\OO \longrightarrow \prod_{\tau \in (-)_2} \OO (\partial \tau) \rightrightarrows  \prod_{\tau \in (-)_2} \text{Acyc}(\partial \tau)
 \]
 where $(-)_2$ takes an open set $U$ to its set of 2-simplices $U_2$.
\end{comment}

\begin{rem}
The definition of $\mcf_3(U)$ prepares for the selection in Definition \ref{d:strictkpert} of the class of voter triangles
which are called `` $\Gamma$-pertinent''. The subsequent Lemma \ref{l:kpert} shows that for orientations in $\mcf_3(U)$,
there cannot exist triples $\{xy, yz, zx\}$ where $y$ (but not $z$) is a friend of $x$,
$z$ (but not $x$) is a friend of $y$, and $x$ (but not $y$) is a friend of $z$ -- a situation which offers no insight into which pair out 
of objects $\{x, y, z\}$ is the most closely related.
\end{rem}

Definition \ref{d:presheafcyclefree}  generalizes to any $k \geq 3$. Whereas the ``strong'' presheaf $\mcf'_k(U)$ of
Definition \ref{d:careful-sheaf} demands no cycles of length up to $k$,   
$\mcf_k(U)$ requires only that all simplices of $U$ of dimension less than $k$ are acyclic.

\begin{defn}\label{d:presheafcyclefree-gen}
Let $k \in \N \cup \infty$.
Define the presheaf $\mcf_k \subseteq \OO$ by
\[\mcf_k(U) = \left\{\alpha \in \OO(U)\mid 
\alpha|_{\partial \tau} \in \mcf'_\infty(\partial \tau)\,\text{for every $\tau$ which is a $k-1$-simplex of $U$}\right\}.
\]
\end{defn}

%%%%%%%%%%%%%%%%%%%%%%%%%%%%%%%%%%%%%%%%%%%%%%%%%%%%%%%%%%%%%%%%%%
\begin{figure}[b]
\caption{\textbf{$\mcf_4'$ is not a sheaf: }
\textit{The left and center digraphs show 4-concordant oriented simplicial complexes whose orientations coincide
on their common 2-simplices. Their union, which includes 1-simplices $b \rightarrow c$
and $c \rightarrow a$, contains both a 4-cycle, shown in red, and a 5-cycle. The failure of the gluing axiom for
the pre-sheaf $\mcf_4'$ shows that it cannot be a sheaf.
}
}
\label{f:sheafcounterex}
\begin{center}
\scalebox{0.45}{\includegraphics{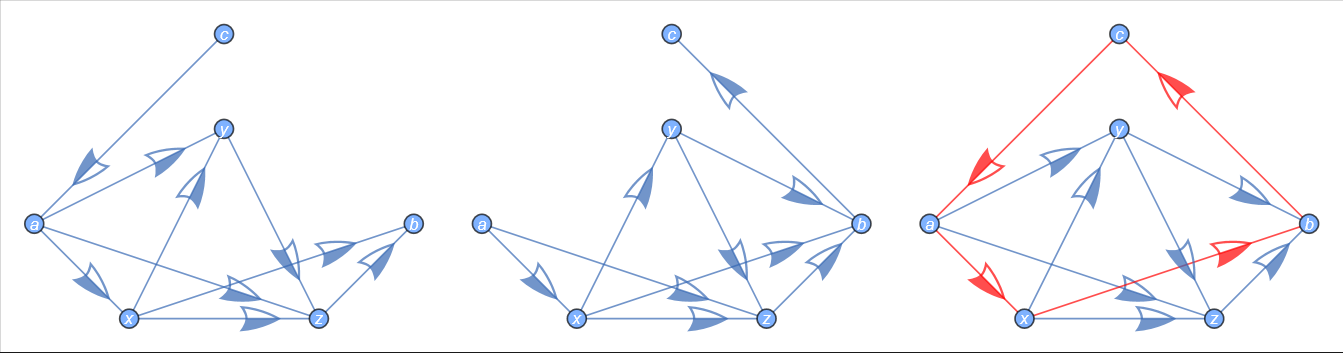} }
\end{center}
\end{figure}

\begin{comment}
\begin{defn}
For any open set $U$ let $\mcf_3(U)$ 
be the set of orientations
of the 1-simplices of $U$ that are acyclic
%\footnote{Prasad Tetali has coined the term ``tricycle-free''.} 
on all 2-simplices of $U$,
i.e., with no 2-simplex $abc$ whose 1-faces have
orientations $a \rightarrow b$, $b \rightarrow c$, $c \rightarrow a$.
\end{defn}

\begin{defn}\label{d:presheafcyclefree-gen}
More generally, for $k$ a positive integer let $\mcf_k(U)$ be the set of orientations of the $1$-simplices of $U$ that are acyclic on all simplices of $U$ of dimension less than $k$.
\end{defn}
\end{comment}

In Proposition~\ref{p:sheafify} we will show that $\mcf_k(U)$ is a sheaf.

\begin{rem}\label{rem:stop-at-dim}
If $k > \dim U$ then $\mcf_k(U) = \mcf_{\dim U + 1}(U)$.  Also, for $k \le 2$ every orientation of the $1$-simplices of $U$ belongs to $\mcf_k(U)$.
\end{rem}

%Other results such as Lemma \ref{l:pullback} also apply under these modified definitions---we still get rank-labelled nearest-neighbour digraphs and so on.

\begin{prop}\label{p:sheafify}
$\mcf_k$ (Definition \ref{d:presheafcyclefree-gen}) is the sheafification of $\mcf'_k$ and is therefore a sheaf.
\end{prop}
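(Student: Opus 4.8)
The plan is to produce the evident injective morphism of presheaves $\iota\colon \mcf'_3 \hookrightarrow \mcf_3$ and to verify that it presents $\mcf_3$ as a sheafification, by checking that $\mcf_3$ is a sheaf and that $\iota$ is a bijection on a basis of open sets; since $\mcf'_3$ is separated this already suffices, but I would in any case confirm the universal property by hand. The morphism $\iota$ itself requires no work: any orientation that is strongly acyclic in the sense of Definition \ref{d:careful-sheaf} is a fortiori acyclic on every $2$-simplex, so $\mcf'_3(U)\subseteq\mcf_3(U)$ for all open $U$, and on both sides restriction is literal restriction of an orientation to fewer $1$-simplices, so $\iota$ is compatible with restriction and injective. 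I would first record the underlying topological fact that the open sets of $\Lambda$ are precisely its subcomplexes (a basic open $U_\sigma$ is the set of faces of $\sigma$, and unions of these are exactly the downward-closed collections of simplices); in particular, if $\{U_i\}$ covers an open set $U$ and $\tau\in U$, then $\tau$ and all its faces lie in a single $U_i$.

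Next I would verify the sheaf axiom for $\mcf_3$. Separatedness holds for the same reason it does for $\mcf'_3$: an orientation is determined by its values on individual $1$-simplices, each of which lies in some member of a given cover. For gluing, take an open cover $\{U_i\}$ of $U$ and sections $s_i\in\mcf_3(U_i)$ agreeing on overlaps; define an orientation $s$ on the $1$-simplices of $U$ by $s(e)=s_i(e)$ whenever $e\in U_i$, which is well defined by compatibility. The point is that $s\in\mcf_3(U)$: given any $2$-simplex $abc$ of $U$, it lies in some $U_i$, hence so do its edges $ab,bc,ca$, and there $s$ agrees with $s_i\in\mcf_3(U_i)$, which is acyclic on $abc$; therefore $s$ has no directed $3$-cycle on $abc$. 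This is the step where the \emph{locality} of the condition defining $\mcf_3$ is essential: a forbidden cycle lives inside one $2$-simplex, hence inside one basic open set, so it cannot be assembled surreptitiously from pieces of different cover members --- which is exactly what goes wrong for $\mcf'_3$ (Figure \ref{f:sheafcounterex}).

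To finish, I would show $\iota_{U_\sigma}$ is a bijection for every basic open $U_\sigma$ and deduce the universal property. Any three vertices $a,b,c$ of $\sigma$ span a $2$-face of $\sigma$, so the triangle $abc$ together with its edges already belongs to $U_\sigma$; hence the strong-acyclicity condition defining $\mcf'_3(U_\sigma)$ and the condition ``acyclic on every $2$-simplex'' defining $\mcf_3(U_\sigma)$ say the same thing, so the two presheaves agree on $U_\sigma$ (this extends Example \ref{ex:clarify-sheaf}, where $\sigma$ is a $2$-simplex). Now for any sheaf $G$ and morphism $\phi\colon\mcf'_3\to G$, and any open $U$, cover $U$ by the basic opens $\{U_\sigma:\sigma\in U\}$ (a legitimate cover, since open sets are subcomplexes); for $t\in\mcf_3(U)$ the sections $\phi(t|_{U_\sigma})\in G(U_\sigma)$ are pairwise compatible, because $U_\sigma\cap U_\tau$ is again a basic open set (or empty), being the collection of faces of $\sigma\cap\tau$, a simplex as a subset of $\sigma$, and $\phi$ is natural; so they glue to a unique $\psi_U(t)\in G(U)$. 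Naturality of $\psi$, the identity $\psi\circ\iota=\phi$, and uniqueness of $\psi$ are then routine diagram chases, using that $G$ is a sheaf. The main obstacle is not a hard argument but a matter of discipline: keeping the two acyclicity conditions distinct, and invoking the ``open $=$ subcomplex'' observation at precisely the two points where it matters --- in the gluing verification for $\mcf_3$, and in the identification of $\mcf_3$ with $\mcf'_3$ on basic opens.
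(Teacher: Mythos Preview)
Your proof is correct. Both you and the paper hinge on the same structural observation---that a $2$-simplex together with its faces lies in a single basic open---but you organize the argument differently. The paper computes the sheafification of $\mcf'_3$ directly from its description as compatible local sections modulo refinement: any cover of $U$ must contain an open set holding each $2$-simplex and its edges, so the glued orientation is forced to be acyclic there, while the cover of $U$ by the basic opens of its simplices shows no further constraint survives; hence the sheafification sections over $U$ are exactly $\mcf_3(U)$. You instead first verify that $\mcf_3$ is a sheaf (the gluing step using the same ``$2$-simplex lives in one cover member'' fact), then observe that on each basic open $U_\sigma$ the two presheaves coincide (since every triple of vertices of $\sigma$ spans a $2$-face), and finally check the universal property by covering an arbitrary open set by basic opens and gluing in the target sheaf $G$. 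Your route is slightly longer but more modular: it cleanly separates ``$\mcf_3$ is a sheaf'' from ``$\iota$ is a sheafification,'' and the stalk/basis argument you sketch (each $\sigma$ has a smallest open neighborhood $U_\sigma$, so bijection on basic opens is bijection on stalks) would already finish without the explicit universal-property verification. The paper's route is shorter because it never has to check the sheaf axiom for $\mcf_3$ separately---that falls out of identifying $\mcf_3$ with a construction known to produce a sheaf.
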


\begin{proof}
Let $U$ be an open set.  The sections of the sheafification of $\mcf'_k$ over $U$
are compatible collections of sections on open covers of $U$ mod the relation that 
identifies pairs of such collections that agree after further refinement.  In particular,
let $S_1, \dots, S_n$ be the maximal simplices contained in $U$.  Every open cover
of $U$ must contain at least one open set that includes $S_i$ and therefore its edges;
thus the orientation must have no cycles of length $k$ or less on the edges of $S_i$.  On the other hand, by
considering an open cover with one set for each simplex in $U$, we see that there are
no further conditions.

Clearly a section of the sheafification over $U$ determines an orientation of the
$1$-simplices in $U$ uniquely (that is, equivalent collections of sections on open
covers determine the same orientation), and conversely an orientation with no cycles of length $k$ or less on the
$S_i$ determines a section over $U$.  We thus see that, for every $U$, the sections
of $\mcf_k$ and of the sheafification of $\mcf'_k$ are in canonical bijection.  It is
obvious that the restriction maps match as well.
\end{proof}

\begin{prop}\label{prop:same-if-too-big}
Let $\Lambda$ be a simplicial complex and $k \in \N \cup \{\infty\}$.
For all open sets $U \subseteq \Delta$ we have
$\mcf_k(U) = \mcf_{\min(k,\dim U + 1)}(U)$.
\end{prop}

\begin{proof} Indeed, there are no simplices in $U$ whose dimension is
greater than $\dim U$, so $\mcf'_k(U) = \mcf'_{\min(k,\dim U+1)}(U)$.
The result then follows for the sheafifications as well.
\end{proof}

\begin{cor}\label{cor:not-too-many}
There are at most $\dim \Lambda$ different sheaves among the $\mcf_k$.
\end{cor}

\begin{proof} By the proposition, we have $\mcf_k = \mcf_{\dim \Lambda+1}$
if $k \ge \dim \Lambda+1$.  It is clear that $\mcf_k = \mcf_2$ if $k \le 2$.
\end{proof}

Again we recall that our primary interest is in simplicial complexes of dimension $2$.  We state a corollary applicable to this setting.

\begin{cor}\label{dim-2-always-f3}
Let $\Lambda$ be a simplicial complex of dimension at most $2$.
Then $\mcf_k(U) = \mcf_3(U)$ for all $k \ge 3$.  
\end{cor}

%%% This is now unnecessary, because the sheafification is always a sheaf.
%\begin{prop}[Gluing property]\label{p:sheaf}
%$U \mapsto \mcf_3(U)$ (Definition \ref{d:presheafcyclefree})
%is a sheaf on the open sets of any 2-dimensional abstract simplicial complex.
%\end{prop}

%\textbf{Remark: } The assertion is generally false for $\mcf_m$ when $m \geq 4$,
%as shown in Figure \ref{f:sheafcounterex}.
%The proof of Proposition \ref{p:sheaf} works because every 3-loop may be cast as a 2-simplex,
%whereas a 4-loop (with four 1-simplices) cannot be cast as a 3-simplex (with six 1-simplices).

%\begin{proof}
%Let $\{U_i\}$ be an open cover of an open set $U$ in a 2-dimensional abstract simplicial complex.
%We show that $\mcf(U)$
%is the subset of $\prod_i \mcf(U_i)$ consisting of sections that
%restrict consistently to $\prod_{i,j} \mcf(U_i \cap U_j)$.
%Indeed, given a 1-simplex in $U$, all $U_i$ containing it assign it the same
%orientation, so we can define a unique orientation on the set of
%1-simplices of $U$ compatible with those on the $\{U_i\}$. Given a $2$-simplex
%in $U$, it must be contained in some $U_i$, and the orientations of its
%1-simplices in such a $U_i$ are acylic, and so they are acyclic on $U$.
%Every 3-loop inside $U$ may also be cast as a 2-simplex, and as such it must be contained
%in at least one of the $U_i$, where it is acyclic by assumption.
%\end{proof}

\begin{cor}\label{c:3sheaf}
The 3-concordant orientations of the 
2-dimensional abstract simplicial complex $\Delta (H_{\Gamma})$ (see (\ref{e:ascknng})
form a sheaf with respect to the topology on $\Delta (H_{\Gamma})$.
\end{cor}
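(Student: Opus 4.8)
The plan is to observe that Corollary \ref{c:3sheaf} is essentially an instance of Proposition \ref{p:sheafify}, specialized to $\Lambda = \Delta(H_\Gamma)$ and then cut down to a sub-poset. First I would recall that Proposition \ref{p:sheafify} already asserts that $U \mapsto \mcf_3(U)$ is a sheaf on \emph{any} abstract simplicial complex $\Lambda$ equipped with the Alexandrov-type topology described in the ``Sheaf perspective'' subsection, since the proof of Proposition \ref{p:sheafify} never used any special feature of $\Lambda$ beyond the combinatorics of its simplices and the associated basic open sets. Hence $U \mapsto \mcf_3(U)$ is a sheaf on $\Delta(H_\Gamma)$ outright. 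The remaining content is to identify the global sections, and more generally the sections over each open set, with the 3-concordant orientations of $\Delta(H_\Gamma)$ in the sense of Definition \ref{d:3-conc} (restricted to this complex as in the list following (\ref{e:ascknng})).

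Next I would spell out that identification. By Definition \ref{d:presheafcyclefree}, a global section of $\mcf_3$ over $\Delta(H_\Gamma)$ is an orientation of all the 1-simplices of $\Delta(H_\Gamma)$ that is acyclic on every 2-simplex of $\Delta(H_\Gamma)$ — that is, on every voter triangle $\{xy, yz, xz\}$ and every enforcer triangle $\{vx, vy, vz\}$ that lies in $\Delta_2$. That is exactly the condition that the orientation be 3-concordant per Definition \ref{d:3-conc}: enforcer triangles acyclic gives a ranking system, and voter triangles acyclic in addition gives 3-concordance. So the sheaf of Definition \ref{d:presheafcyclefree}, evaluated on $\Delta(H_\Gamma)$, \emph{is} the set of 3-concordant orientations; combined with the first paragraph, the sheaf property for this complex follows. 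One small point to address is that $\Delta(H_\Gamma)$ only carries $0$-, $1$-, and $2$-cells, so there are no higher loops arising as boundaries of simplices to worry about — the relevant ``acyclic on 2-simplices'' condition is precisely the right one, and no $\mcf'_m$ with $m \geq 4$ enters.

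The only genuine obstacle is bookkeeping rather than mathematics: making sure that the topology on $\Delta(H_\Gamma)$ used in the statement of Corollary \ref{c:3sheaf} is literally the one to which Proposition \ref{p:sheafify} applies — i.e. the basic opens are the down-sets (in the containment poset) generated by a single simplex — and that ``3-concordant orientation of $\Delta(H_\Gamma)$'' in Definition \ref{d:ood3concord} unwinds to exactly ``acyclic on all 2-simplices of $\Delta(H_\Gamma)$.'' Once those two identifications are written down, the corollary is immediate from Proposition \ref{p:sheafify} with $\Lambda := \Delta(H_\Gamma)$; in fact I would phrase the proof as the single sentence: \emph{apply Proposition \ref{p:sheafify} to $\Lambda = \Delta(H_\Gamma)$ and note that $\mcf_3(\Delta(H_\Gamma))$ is by definition the set of 3-concordant orientations of $\Delta(H_\Gamma)$.} If desired one can add a remark that, unlike the complete-line-graph case where $\mcf'_n$ recovered concordant systems, here no sheaf of $k$-concordant orientations for $k \geq 4$ is available, consistent with the warning preceding Definition \ref{d:presheafcyclefree}.
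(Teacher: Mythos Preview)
Your proposal is correct and matches the paper's approach: the corollary is stated without proof in the paper, being immediate from Proposition~\ref{p:sheafify} applied to $\Lambda = \Delta(H_\Gamma)$, together with the observation that a global section of $\mcf_3$ on this complex is by definition a 3-concordant orientation. Your additional bookkeeping (identifying the topology and unwinding Definition~\ref{d:3-conc}) is accurate and simply makes explicit what the paper leaves implicit.
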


\textbf{Interpretation: }The 3-concordant out-ordered diagraphs $(S, \Gamma, \preceq)$
(Definition \ref{d:ood3concord}), for fixed $S$ and
friend sets $\{\Gamma(x), x \in S\}$, form a sheaf in the sense of Corollary \ref{c:3sheaf}.

\subsection{How the sheaf property affects the rank-based linkage algorithm}\label{s:sheafvalue}
The primary effect of Definition \ref{d:presheafcyclefree} of the sheaf $\mcf_3$ (proved as such in Corollary \ref{c:3sheaf}),
appears in the focus in the next section (Definition \ref{d:strictkpert} and Lemma \ref{l:kpert}) on fixing in advance the collection of 
2-simplices on which acyclicity will be enforced. If one were to choose an approach based on  $\mcf'_3$ (Definition \ref{d:careful-sheaf})
then directed 3-cycles could crop up ``out of the blue'' when forming an open cover, as Figure \ref{f:noglue3} demonstrates;
unanticipated 3-cycles would interfere with the counting process in Algorithm \ref{a:rbl}.

We prohibit directed 3-cycles in order to compute unambiguously the in-sway between a pair of objects, by counting 2-simplices in which 
this pair is the source in a directed acyclic triple. We are at liberty to permit cycles of length four and above, since they have 
no direct algorithmic impact. Working with 3-concordant ranking systems, rather than fully concordant ones has two great benefits: (1) 
the computational effort to verify 3-concordance is much less than that for full concordance; and (2) 
concordant ranking systems arise when objects have a metric embedding (well-studied in data science), whereas 
3-concordant ranking systems can occur when no such metric embedding is possible -- see Baron \&Darling \cite{bar}.

Sheaf morphisms make a formal appearance in Section \ref{s:gluecompare};
we conclude in Section \ref{s:sheafmerge} that sheaves offer greater generality than ranking systems in pooling data.

\section{Rank-based linkage based on the K-nearest neighbor digraph}\label{s:rblformal}

\subsection{Rank-based linkage calculations}
 Algorithm \ref{a:rbl} performs a summation over $y \in S$ of voter triangles 
 $\{xy, yz, xz\} \in \Delta_2$ as defined in (\ref{e:ascknng}). 
Definition \ref{d:strictkpert} makes membership of $\Delta_2$ precise.

\begin{defn}\label{d:strictkpert}
Call a 2-simplex $\{xy, yz, xz\}$ (i.e. a voter triangle) $\Gamma$-pertinent if
 all three pairs $\{x, y\}$, $\{y, z\}$,  and $\{x, z\}$ are in $\mathcal{U}_{\Gamma}$, and moreover all three sets
 \[
 \{y, z\} \cap \Gamma(x),  \quad \{x, y\} \cap \Gamma(z),
 \quad \{x, z\} \cap \Gamma(y)
 \]
 are non-empty.
\end{defn}

A 2-simplex which is not $\Gamma$-pertinent plays no role in
Algorithm \ref{a:rbl}. For example 2-simplices $\{xy, yz, xz\}$
in which neither $x$ nor $z$ is a friend of $y$ are ignored, even when
$z \prec_x y$ and $x \prec_z  y$. The ``third wheel'' social metaphor
for this choice is discussed in Section \ref{s:linkagefunctor}. Here is a geometric explanation.

\textbf{Point cloud example: } Consider objects as points in a 
Euclidean space with a Euclidean distance comparator. 
Suppose $x$ and $z$ are outliers which are closer to each other than to the origin,
and the other $n-2$ objects are clustered around the origin.
In that case $\{x, z\} \cap \Gamma(y)$ will be empty
for all $y$ near the origin, and the 0-simplex $xz$ will not be
a 0-face of any $\Gamma$-pertinent 2-simplices.
Rank-based linkage will give the pair $\{x, z\}$ zero in-sway, making them isolated points,
not a cluster of two points.

Lemma \ref{l:kpert} provides the rationale for the organization of Algorithm \ref{a:rbl}
in the 3-concordant case, and leads to Proposition \ref{p:kpertbound},
which bounds the number of $\Gamma$-pertinent 2-simplices.

\begin{lem}\label{l:kpert}
Assume the ranking system is 3-concordant.
In every $\Gamma$-pertinent 2-simplex $\{xy, yz, xz\}$, there is at least
one pair of mutual friends among the objects $\{x, y, z\}$.
\end{lem}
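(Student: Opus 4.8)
The plan is to argue by contradiction, reducing the question to the structure of a three-vertex tournament. Suppose $\{xy, yz, xz\}$ is a $\Gamma$-pertinent voter triangle in which no two of $x, y, z$ are mutual friends. Since $\Gamma$-pertinence puts each of $\{x,y\}, \{y,z\}, \{x,z\}$ into $\mathcal{U}_{\Gamma}$, for every one of these pairs at least one object is a friend of the other; the hypothesis that there are no mutual friends upgrades ``at least one'' to ``exactly one''. Define a relation on $\{x, y, z\}$ by writing $a \rightsquigarrow b$ when $b \in \Gamma(a)$ but $a \notin \Gamma(b)$. The preceding sentence says precisely that $\rightsquigarrow$ is a tournament on the three objects, so it is either transitive or a single directed $3$-cycle, and I would dispose of the two cases separately.

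If $\rightsquigarrow$ is transitive it has a sink $w$, meaning $a \rightsquigarrow w$ and $b \rightsquigarrow w$ for the other two objects $a, b$; by definition this gives $a \notin \Gamma(w)$ and $b \notin \Gamma(w)$, so $\{a, b\} \cap \Gamma(w) = \emptyset$. But $\Gamma$-pertinence demands exactly that this intersection be non-empty, a contradiction. Hence the friendship tournament must be the directed $3$-cycle. Using that voter triangles and the pertinence condition are symmetric in $x, y, z$, relabel so that $x \rightsquigarrow y \rightsquigarrow z \rightsquigarrow x$; explicitly $y \in \Gamma(x)$, $z \in \Gamma(y)$, $x \in \Gamma(z)$, while $x \notin \Gamma(y)$, $y \notin \Gamma(z)$, $z \notin \Gamma(x)$.

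Now read off the induced orientation of the three $1$-simplices of the triangle from rule (\ref{e:1-simplices}), whose content is that a friend of a vertex is closer to it than a non-friend. At vertex $x$ the edge $\{xy, xz\}$ has $y \in \Gamma(x)$ and $z \notin \Gamma(x)$, forcing $xy \to xz$; at vertex $y$ the edge $\{xy, yz\}$ has $z \in \Gamma(y)$ and $x \notin \Gamma(y)$, forcing $yz \to xy$; at vertex $z$ the edge $\{xz, yz\}$ has $x \in \Gamma(z)$ and $y \notin \Gamma(z)$, forcing $xz \to yz$. These arrows close into the directed $3$-cycle $xy \to xz \to yz \to xy$, so the voter $2$-cell $\{xy, yz, xz\}$ is cyclic, contradicting the hypothesis of $3$-concordance (Definitions \ref{d:3-conc} and \ref{d:ood3concord}). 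Both cases being impossible, some pair among $\{x, y, z\}$ must consist of mutual friends, as claimed.

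I expect the only delicate step to be the orientation bookkeeping in the cyclic case: one has to apply (\ref{e:1-simplices}) at each of the three vertices and check that the three forced arrows genuinely form a cycle rather than yielding a source-and-sink configuration. This is a direct substitution into the definition, and the conceptual weight of the lemma sits entirely in the two contradictions above --- a pertinence failure at the sink in the transitive case, and a $3$-concordance failure in the cyclic case.
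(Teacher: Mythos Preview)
Your proof is correct and follows essentially the same line as the paper's. You frame it as a contradiction via the transitive-versus-cyclic dichotomy for three-vertex tournaments, whereas the paper phrases the non-cyclic case as a degree-counting pigeonhole argument, but the substance---the cyclic friendship pattern forces a directed $3$-cycle on the voter triangle via (\ref{e:1-simplices}), and any other pattern already yields mutual friends or violates the $\Gamma(w)$ non-emptiness clause of Definition~\ref{d:strictkpert}---is identical.
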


\begin{proof}
Consider the subgraph of the directed $K$-NN graph (Definition \ref{d:riwdg})
induced on the vertex set $\{x, y, z\}$.
Definition \ref{d:strictkpert} shows that each of the objects $\{x, y, z\}$
has out-degree at least one in this subgraph. 

If at least one object has out-degree two, then the 
total in-degree of the subgraph is at least four.
Since there are three vertices, the pigeonhole principle shows
that some vertex has in-degree two, say vertex $x$. Then $x$ is the target
of arcs from both $y$ and $z$. However $x$
is also the source of at least one arc, say to $z$. Hence
$x$ and $z$ are mutual friends.

On the other hand if each of $\{x, y, z\}$ has in-degree one and out-degree one, as in
 \[
      \begin{tikzcd}{} & x \arrow{dr}{} \\y \arrow{ur}{}  && 
      \arrow{ll}{}z \end{tikzcd} 
 \]
then (\ref{e:1-simplices}) implies that 
the 2-simplex $\{xy, yz, xz\}$ is a directed 3-cycle, in this case:
  \[
      \begin{tikzcd}{} & yz \arrow{dr}{} \\xy \arrow{ur}{}  && 
      \arrow{ll}{}xz \end{tikzcd} 
 \]
which violates the assumption of 3-concordance.
Hence the number of arcs in the subgraph cannot be three,
which completes the proof.
\end{proof}

\begin{prop}\label{p:kpertbound}
    In a 3-concordant ranking system on $n$ objects, the
     number of $\Gamma$-pertinent 2-simplices does not exceed
    $n K^2$.
\end{prop}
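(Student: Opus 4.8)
The plan is to reduce the count of $\Gamma$-pertinent voter triangles to a count of pairs consisting of a vertex together with two of that vertex's friends. The engine is the following structural observation, which I would prove first: \emph{every $\Gamma$-pertinent $2$-simplex $\{x,y,z\}$ has a vertex $v\in\{x,y,z\}$ both of whose triangle-neighbours lie in $\Gamma(v)$.}

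To prove this I start from Lemma \ref{l:kpert}: since the ranking system is $3$-concordant, $\{x,y,z\}$ contains a pair of mutual friends, say $\{x,z\}$ (so $z\in\Gamma(x)$ and $x\in\Gamma(z)$); let $y$ be the remaining vertex. I then split on the location of $y$. If $y\in\Gamma(x)$ we are done with $v=x$, since then $\{y,z\}\subseteq\Gamma(x)$; symmetrically, if $y\in\Gamma(z)$ take $v=z$. In the remaining case $y\notin\Gamma(x)$ and $y\notin\Gamma(z)$, and here I invoke the definition of $\Gamma$-pertinence (Definition \ref{d:strictkpert}): $\{x,y\}$ and $\{y,z\}$ are edges of $\mathcal{U}_{\Gamma}$, so, $\mathcal{U}_{\Gamma}$ being the symmetric closure of the friend relation, these edges can only be present because $x\in\Gamma(y)$ and $z\in\Gamma(y)$ respectively; thus $\{x,z\}\subseteq\Gamma(y)$ and $v=y$ works.

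Given the observation, I finish by an injection/counting argument: for each $\Gamma$-pertinent $2$-simplex $T$ fix one such vertex $v_T$; the map $T\mapsto(v_T,\,T\setminus\{v_T\})$ is injective because $T=\{v_T\}\cup(T\setminus\{v_T\})$, and its image is contained in $\{(v,P):v\in S,\ P\in\binom{\Gamma(v)}{2}\}$, whose cardinality is $\sum_{v\in S}\binom{|\Gamma(v)|}{2}\le n\binom{K}{2}=\frac{nK(K-1)}{2}\le nK^2$. This gives the asserted bound (indeed a slightly stronger one), regardless of whether the friend sets have equal size.

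The counting step is routine. The one place to be careful — the main obstacle — is the ``remaining case'' of the structural observation, where one must use both $3$-concordance (through Lemma \ref{l:kpert}, to secure a mutual-friend pair in the first place) and the precise, asymmetric content of $\Gamma$-pertinence together with the reading of $\mathcal{U}_{\Gamma}$ as a symmetric closure; this is exactly where the ``crucial'' non-emptiness conditions $\{y,z\}\cap\Gamma(x)\neq\emptyset$, $\{x,z\}\cap\Gamma(y)\neq\emptyset$, $\{x,y\}\cap\Gamma(z)\neq\emptyset$ do their work. It is also worth recording explicitly that the number actually obtained, $nK(K-1)/2$, is consistent with the stated upper bound $nK^2$.
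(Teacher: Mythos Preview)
Your proof is correct. Both you and the paper start from Lemma~\ref{l:kpert} to secure a mutual-friend pair and then bound the triangles by an injection-style count, but the structural observations differ. The paper's proof, after fixing a mutual-friend pair $\{x,z\}$, uses the non-emptiness condition $\{x,z\}\cap\Gamma(y)\neq\emptyset$ to relabel so that $x\in\Gamma(y)$, then counts ordered chains $(y,x,z)$ with $x\in\Gamma(y)$ and $z\in\Gamma(x)$, giving $n\cdot K\cdot K$. You instead prove the cleaner statement that some vertex $v$ of the triangle has \emph{both} other vertices in $\Gamma(v)$, handling the residual case via the $\mathcal{U}_\Gamma$ membership of $\{x,y\}$ and $\{y,z\}$; this lets you count unordered pairs inside a single friend set and yields the sharper bound $n\binom{K}{2}=nK(K-1)/2$. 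Your route is marginally more work in the case analysis but buys a factor-of-two improvement and a tidier injection; the paper's route is shorter because the chain structure falls out immediately from the pertinence condition on $\Gamma(y)$. One small remark: in your final paragraph you attribute the key step in the residual case to the three non-emptiness conditions, but what you actually use there is the $\mathcal{U}_\Gamma$-membership clause of Definition~\ref{d:strictkpert} together with $y\notin\Gamma(x)$, $y\notin\Gamma(z)$; the argument itself is unaffected.
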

\begin{proof}
Given a $\Gamma$-pertinent 2-simplex $\{xy, yz, xz\}$,
we may assume by Lemma \ref{l:kpert} that $x$ and $z$ are mutual friends,
and $x \in \Gamma(y)$, after relabelling if necessary.
There are $n$ choices of $y$, $K$ choices of $x$, and at most $K$
choices of $z$ in $\Gamma(x) \setminus \{y\}$, giving
$n \times K \times K$ choices overall.
\end{proof}
\begin{comment}
\subsubsection{Generalization to sub-complexes}
From a theoretical perspective, results of this section generalize to a subcomplex
$\Delta' \subseteq \Delta(H_{\Gamma})$.  Then $\Gamma(v)$ will not include any vertices not 
adjacent to $v$ in $\Delta'$ and the definition of $\Gamma$-pertinent only applies to 
$2$-simplices actually present in $\Delta'$.  The ranking system on $\Delta'$ must still
be $3$-concordant, but only in the sense of being a global section of $\mcf_3$ 
(Definition \ref{d:careful-sheaf}).  The results and proofs in this section are still
valid in this greater generality.
\end{comment}

\subsection{In-sway}\label{s:insway}
The notion of \textit{in-sway} is intended to convey relative closeness of two
objects in $S$, with respect to a ranking system,
just as cohesion does for partitioned local depth \cite{ber}.
Computation of in-sway
is the driving principle of rank-based linkage: see Algorithm \ref{a:rbl}.

If we were dealing with the whole ranking system, rather than
the $\Gamma$-neighbor graph, then
the in-sway of the unordered pair $\{x, z\} \in \binom{S}{2}$ would be
the number of voter triangles $\{xy, yz, xz\}$ in which $xz$ is the source.
In the context of $\Gamma$-neighbors, the in-sway does not take account 
of all voter triangles, but only of the $\Gamma$-pertinent 2-simplices of Definition \ref{d:strictkpert}.
The principle will be: {\em the 2-simplices whose source 0-simplex receives in-sway are exactly the voter triangles 
bounded by 1-simplices which receive an orientation.}

\begin{defn}\label{d:ksway}
Given a 3-concordant ranking system on $S$, an integer $K < n$,
and a pair of mutual friends $x$ and $z$ in $S$,
the in-sway $\sigma(\{x,z\}):=\sigma_K(\{x,  z\})$ is the number of oriented $\Gamma$-pertinent 2-simplices
$\{xy, yz, xz\}$ in which $xz$ is the source.
\end{defn}

Why restrict the definition to mutual friends? 
Lemma \ref{l:ksway} shows that without mutual friendship the in-sway is zero, because such
 a pair $\{x, z\}$ could not be a source in a $\Gamma$-pertinent 2-simplex.

\begin{lem}\label{l:ksway}
Given an oriented $\Gamma$-pertinent 2-simplex $\{xy, yz, xz\}$
which is not a directed 3-cycle, conditions (a)
and (b) are equivalent.
\begin{enumerate}
    \item [(a)]
    $xz$ is the source in the 2-simplex.
    \item [(b)]
    $x$ and $z$ are mutual friends (i.e. $x \in \Gamma(z)$
    and $z \in \Gamma(x)$) and both i. and ii. hold:
    \begin{enumerate}
    \item[i.] If $y \in \Gamma(x)$, then $z \prec_x y$.
    \item[ii.] If $y \in \Gamma(z)$, then $x \prec_z y$.
\end{enumerate}
\end{enumerate}
\end{lem}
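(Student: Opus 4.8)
The plan is to prove the two implications (a)$\Rightarrow$(b) and (b)$\Rightarrow$(a) by a careful case analysis on which of the objects $x$, $z$ lie in which friend sets, using the orientation rule (\ref{e:1-simplices}) for the 1-simplices incident to $xz$. First I would observe that, since $\{xy,yz,xz\}$ is a $\Gamma$-pertinent 2-simplex, Definition \ref{d:strictkpert} forces the three intersection sets $\{y,z\}\cap\Gamma(x)$, $\{x,y\}\cap\Gamma(z)$, $\{x,z\}\cap\Gamma(y)$ all to be non-empty; in particular at least one of $y,z$ is a friend of $x$ and at least one of $x,y$ is a friend of $z$. The key point is that ``$xz$ is the source'' means exactly that both 1-simplices $xz \mathbin{\text{---}} xy$ and $xz \mathbin{\text{---}} yz$ are oriented \emph{away} from $xz$, i.e.\ $xz \rightarrow xy$ and $xz \rightarrow yz$. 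I would then translate each of these two arrow conditions, via (\ref{e:1-simplices}) and the definition of $\preceq$, into a statement about ranks at $x$ (for the first) and at $z$ (for the second).

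For the 1-simplex joining $xz$ and $xy$: both $xz$ and $xy$ contain $x$, so the comparator at $x$ governs this edge. If $z\in\Gamma(x)$ and $y\in\Gamma(x)$, then $xz\rightarrow xy$ iff $z\prec_x y$. If $z\in\Gamma(x)$ but $y\notin\Gamma(x)$, then by (\ref{e:1-simplices}) the edge is automatically oriented $xz\rightarrow xy$ (a friend beats a non-friend). If $z\notin\Gamma(x)$ but $y\in\Gamma(x)$, then the edge is forced to point $xy\rightarrow xz$, so $xz$ cannot be the source; and the case $z\notin\Gamma(x)$, $y\notin\Gamma(x)$ is impossible since $\{y,z\}\cap\Gamma(x)\neq\emptyset$. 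Hence $xz\rightarrow xy$ holds if and only if $z\in\Gamma(x)$ and, \emph{additionally}, $z\prec_x y$ whenever $y\in\Gamma(x)$ — which is precisely ``$z\in\Gamma(x)$ and condition (b)(i)''. By the symmetric argument with the roles of $x$ and $z$ interchanged (the edge joining $xz$ and $yz$ is governed by the comparator at $z$), $xz\rightarrow yz$ holds if and only if $x\in\Gamma(z)$ and condition (b)(ii) holds. Combining: $xz$ is the source iff $z\in\Gamma(x)$, $x\in\Gamma(z)$, (b)(i), and (b)(ii) — which is exactly statement (b). This establishes both directions at once.

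The only subtlety — and the main thing to get right rather than a genuine obstacle — is the bookkeeping of the forced-orientation cases: one must check that each time the hypothesis ``$xz$ is the source'' is in force, the impossible configurations (e.g.\ $z\notin\Gamma(x)$ with $y\in\Gamma(x)$, which would force the arrow the wrong way) really are excluded, and that $\Gamma$-pertinence rules out the doubly-non-friend case. I would also note explicitly where the hypothesis ``not a directed 3-cycle'' is used: it guarantees the 2-simplex has a well-defined source (as in Lemma \ref{l:kpert}'s proof, a $\Gamma$-pertinent voter triangle that is not a 3-cycle has each vertex of the induced digraph on $\{x,y,z\}$ with out-degree $\geq 1$, and acyclicity then yields a unique source among the three 0-simplices), so that condition (a) is meaningful. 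No arithmetic is involved; the argument is a finite enumeration of at most four cases per edge, closed under the $x\leftrightarrow z$ symmetry, so I would present it compactly rather than writing every case out.
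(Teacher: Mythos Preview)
Your proposal is correct and follows essentially the same approach as the paper: translate ``$xz$ is the source'' into the two arrow conditions $xz\rightarrow xy$ and $xz\rightarrow yz$, then decode each arrow via the orientation rule (\ref{e:1-simplices}) into the corresponding friend/rank statement at $x$ and at $z$ respectively. The paper splits this into (b)$\Rightarrow$(a) and (a)$\Rightarrow$(b) separately while you package it as a single biconditional case analysis, but the content is identical.

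One small remark: your discussion of where the ``not a directed 3-cycle'' hypothesis enters is slightly off. Condition (a) is perfectly meaningful even for a 3-cycle (it is simply false, since no 0-simplex has out-degree two), and your own case analysis---like the paper's---never invokes the hypothesis. The equivalence (a)$\Leftrightarrow$(b) in fact holds without it; the hypothesis appears in the lemma statement only because the surrounding discussion is restricted to 3-concordant systems, not because the proof needs it.
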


\begin{proof}
First assume (b). Condition i. ensures the direction 
 $xz \rightarrow xy$, while condition ii. ensures the direction 
 $xz \rightarrow yz$. Thus $xz$ is the source in the 2-simplex,
 which proves (a).
 
 Conversely assume (a), which gives the directions 
  $xz \rightarrow xy$ and  $xz \rightarrow yz$.
  The former implies that either  $\{y, z\} \subset \Gamma(x)$
  with $z \prec_x y$, or else
  \(
z \in \Gamma(x) \land y \notin \Gamma(x),
  \)
  by (\ref{e:1-simplices}).  The latter implies
  that either  $\{x, y\} \subset \Gamma(z)$
  with $x \prec_z y$, or else
  \(
x \in \Gamma(z) \land y \notin \Gamma(z).
  \)
Combining these shows that $x$ and $z$ are mutual friends
and i. and ii. hold, establishing (b).
\end{proof}

%%%%%%%%%%%%%%%%%%%%%%%%%%%%%%%%%%%%%%%%%%%%%%%%%%%%%%%%%%%%%%%%

\subsection{Formal definition of rank-based linkage}

Assume for Definitions \ref{d:linkage} and \ref{d:rbl} that we
are given a representative out-ordered digraph, as in Definition \ref{d:riwdg}.

\begin{defn}\label{d:linkage}
The linkage graph is the undirected edge-weighted graph 
$(S, \mathcal{L}, \sigma_K)$ 
whose links $\mathcal{L}$ are the pairs of mutual friends, and whose weight function $\sigma_K: \mathcal{L} \to \Z_+$
is the in-sway of Definition \ref{d:ksway}.
\end{defn}

\begin{defn}\label{d:rbl}
 Rank-based linkage with a cut-off at $t$ partitions $S$ into
 graph components of $(S, \mathcal{L}_t)$, 
 where $\mathcal{L}_t$ is defined to be the set of links in $\mathcal{L}$ whose in-sway is at least $t$.
This collection of partitions, which is refined for increasing
$t =  1, 2, \ldots$,
is the hierarchical partitioning scheme of rank-based linkage.
\end{defn}

\begin{prop}\label{p:rblcorrectness}
Algorithm \ref{a:rbl} (rank-based linkage) computes the
in-sway $\sigma_K(\{x,  z\})$ for all pairs of mutual friends $x$ and $z$
in no more than $O(n K^2)$ steps.
\end{prop}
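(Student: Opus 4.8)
The plan is to verify the two claims of Proposition~\ref{p:rblcorrectness} separately: first that Algorithm~\ref{a:rbl} correctly computes the in-sway $\sigma_K(\{x,z\})$ as given by Definition~\ref{d:ksway}, and second that it does so in $O(nK^2)$ steps. For correctness, I would trace through the \textsc{In-sway} procedure and match each of its tests against the definition of an oriented $\Gamma$-pertinent 2-simplex $\{xy,yz,xz\}$ in which $xz$ is the source. The outer loop ranges over links $\{x,z\}\in\mathcal{L}$, i.e.\ pairs of mutual friends, which is exactly the domain on which $\sigma_K$ is defined (Definition~\ref{d:linkage}); the inner loop over common neighbors $y$ of $x$ and $z$ in $(S,\mathcal{U}_\Gamma)$ enumerates the candidate apexes for which $\{x,y\},\{y,z\}\in\mathcal{U}_\Gamma$. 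The guard $\{x,z\}\cap\Gamma(y)\neq\emptyset$ together with $y\notin\{x,z\}$ and the mutual-friendship of $x,z$ gives precisely the three nonemptiness conditions of Definition~\ref{d:strictkpert}, so the $y$ that pass the guard are in bijection with the $\Gamma$-pertinent voter triangles $\{xy,yz,xz\}$. Finally, the predicate \textsc{FirstElementIsSource} tests $(y\notin\Gamma(x)\ \mathrm{or}\ z\prec_x y)$ and $(y\notin\Gamma(z)\ \mathrm{or}\ x\prec_z y)$, which are exactly conditions (b)i and (b)ii of Lemma~\ref{l:ksway}; since $x,z$ are already known to be mutual friends, Lemma~\ref{l:ksway} says this holds if and only if $xz$ is the source in that 2-simplex (the triangle is not a directed 3-cycle by 3-concordance, Lemma~\ref{l:kpert} via Definition~\ref{d:3-conc}). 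Hence the accumulated count equals the number of oriented $\Gamma$-pertinent 2-simplices with source $xz$, which is $\sigma_K(\{x,z\})$ by Definition~\ref{d:ksway}.

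For the complexity bound, I would argue as follows. Building $\mathcal{L}$ in the \textsc{Mutual friend list} procedure costs $O(nK)$: the double loop ranges over $x\in S$ and $z\in\Gamma(x)$, at most $nK$ iterations, each doing an $O(1)$ membership test $x\in\Gamma(z)$ (assuming $\Gamma(z)$ is stored with constant-time lookup, e.g.\ a hash set, as in the implementation~\cite{rbl}). For the \textsc{In-sway} procedure, rather than bounding the two nested loops crudely, I would observe that each unit increment to some $\sigma_K(\{x,z\})$ corresponds to a distinct oriented $\Gamma$-pertinent 2-simplex with source $xz$, and the work attributable to each inner-loop iteration that does \emph{not} increment (i.e.\ fails a guard or the source predicate) can be charged either to a $\Gamma$-pertinent triangle or to a bounded amount of wasted work per link. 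Cleanly: the inner loop for a fixed link $\{x,z\}$ ranges over common neighbors $y$; restricting attention to those $y$ with $\{x,z\}\cap\Gamma(y)\neq\emptyset$, and using that $x$ and $z$ are mutual friends so that (after relabeling) $z\in\Gamma(x)$, the pair $(x,z,y)$ is counted among the at most $nK^2$ triples of Proposition~\ref{p:kpertbound}. The remaining $y$ in the inner loop are those failing the guard; but here I would instead simply enumerate the inner loop smartly — iterate over $y\in\Gamma(x)\cup\Gamma(z)$ (size $\le 2K$) rather than over all common neighbors, since any $y$ contributing must have $\{x,z\}\cap\Gamma(y)\neq\emptyset$, wait — that condition is about $y$'s friends, not about $y$ being a friend; so instead iterate over $y\in\Gamma(x)$ together with those $y$ with $x\in\Gamma(y)$, intersected appropriately. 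In any case, each link does $O(K)$ work and there are $O(nK)$ links, giving $O(nK^2)$; combined with the $O(nK)$ setup this is $O(nK^2)$ total.

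The main obstacle I expect is the complexity half, specifically making the inner-loop enumeration genuinely $O(K)$ per link rather than $O(\deg y)$ or $O(n)$. The pseudocode as written says ``for $y$ such that $\{x,y\}\in\mathcal{U}_\Gamma$ and $\{z,y\}\in\mathcal{U}_\Gamma$'', which naively ranges over all of $S$; the honest argument must either (i) note that one can enumerate common neighbors of $x$ and $z$ in time $O(\min(\deg x,\deg z))$ by walking the shorter adjacency list, and that in $(S,\mathcal{U}_\Gamma)$ every vertex has degree at most... — no, degrees in the \emph{undirected} $K$-NN graph can exceed $K$ because of in-edges. So the correct route is (ii): restrict to $y$ satisfying the guard $\{x,z\}\cap\Gamma(y)\neq\emptyset$ from the start, i.e.\ $y$ such that $x\in\Gamma(y)$ or $z\in\Gamma(y)$, which means $y$ lies in the in-neighborhood of $x$ or of $z$ — still potentially large. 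The clean fix, which I would adopt, is to reorganize the charging argument: rather than bounding per-link work, bound the \emph{total} number of (link, $y$)-pairs examined that pass the $\Gamma$-pertinence guard by $O(nK^2)$ via Proposition~\ref{p:kpertbound} directly, and handle the pairs that fail the guard by noting the implementation only iterates $y$ over $\Gamma(x)\cup\Gamma(z)\cup\{y : x\in\Gamma(y) \text{ or } z\in\Gamma(y)\}$; the first part has size $O(K)$ per link contributing $O(nK^2)$, and the second part, summed over all links $\{x,z\}$ with $z\in\Gamma(x)$, counts pairs $(x,z,y)$ with $z\in\Gamma(x)$ and $x\in\Gamma(y)$ (or symmetric), of which there are at most $n\cdot K\cdot K$. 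I would present this final counting cleanly and defer the low-level data-structure remarks to a reference to~\cite{rbl}.
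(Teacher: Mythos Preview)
Your correctness argument is essentially the paper's: iterate over mutual-friend pairs (justified by Lemma~\ref{l:kpert}), check the conditions of Definition~\ref{d:strictkpert}, and invoke Lemma~\ref{l:ksway}(b) to identify the source. That part is fine and matches the paper closely.

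On complexity the paper is much terser than you are: it simply says the $O(nK^2)$ bound ``follows from Proposition~\ref{p:kpertbound}'' and stops. Your worry about inner-loop iterations that \emph{fail} the guard is legitimate and the paper does not address it explicitly, but your resolution meanders more than necessary. The clean observation you are circling around is this: if $y$ lies in the inner-loop range (so $\{x,y\},\{z,y\}\in\mathcal{U}_\Gamma$) but fails the guard $\{x,z\}\cap\Gamma(y)\neq\emptyset$, then $x\notin\Gamma(y)$ forces $y\in\Gamma(x)$ and $z\notin\Gamma(y)$ forces $y\in\Gamma(z)$; hence every failed-guard $y$ lies in $\Gamma(x)\cap\Gamma(z)$, of which there are at most $K$. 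With at most $nK$ links, that is $O(nK^2)$ wasted iterations. The iterations that \emph{pass} the guard are in bijection with (link, $\Gamma$-pertinent 2-simplex) incidences, and since each $\Gamma$-pertinent 2-simplex contains at most three mutual-friend pairs, Proposition~\ref{p:kpertbound} bounds these by $3nK^2$. This replaces your last paragraph of back-and-forth with two sentences.

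One small slip: your parenthetical ``the triangle is not a directed 3-cycle by 3-concordance, Lemma~\ref{l:kpert} via Definition~\ref{d:3-conc}'' mis-cites Lemma~\ref{l:kpert}; acyclicity of the 2-simplex is directly the 3-concordance hypothesis (Definition~\ref{d:ood3concord}), not a consequence of Lemma~\ref{l:kpert}.
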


\begin{proof}
To perform the rank-based linkage computation, it suffices to visit every
$\Gamma$-pertinent 2-simplex and determine which 0-simplex is the source.
By Lemma \ref{l:kpert}, every $\Gamma$-pertinent 2-simplex contains a pair of
mutual friends, say $\{x, z\}$. Hence it suffices to visit every
pair of mutual friends, and for such a pair to enumerate the 
$\Gamma$-pertinent 2-simplices $xyz$ in which $xz$ is the source.

The first part of Algorithm \ref{a:rbl} builds the set $\mathcal{L}$
of mutual friend pairs. If $\{x, z\}$ is such a pair, then
it suffices by the definition of the simplicial complex
to consider only those $y \notin \{x, z\}$ such that
at least one of $\{x, y\}$ and $\{z, y\}$
is in $\mathcal{U}_{\Gamma}$. By Definition \ref{d:strictkpert}, we must
also restrict to $y$ such that $\{x, z\} \cap \Gamma(y)$ is non-empty.
These conditions appear in Algorithm \ref{a:rbl}.

Given a triple $\{x, y, z\}$, Lemma \ref{l:ksway} (b) gives the conditions
for $xz$ to be the source in the 2-simplex $xyz$. These conditions
are reproduced in the predicate \textsc{FirstElementIsSource} of 
Algorithm \ref{a:rbl}.

The $O(n K^2)$  complexity assertion follows from Proposition \ref{p:kpertbound}.
\end{proof}

\subsection{Partitioned local depth versus rank-based linkage}\label{s:paldvsrbl}

Rank-based linkage was inspired partly by partitioned local depth (PaLD)
\cite{ber}. Figure \ref{f:pald10pts} illustrates the cluster graph produced by PaLD, run against the same
data from Table \ref{t:3conc10points} as was used to illustrate rank-based linkage in Figure \ref{f:rbl10pts}. In both cases, objects 0 and 6
form the most bonded pair, but objects 4 and 8 are not as strongly
linked by PaLD as by rank-based linkage.
To explain how the two methods differ, start by reframing
the notion of \textit{conflict focus} arising in the PaLD algorithm.

\begin{defn}\label{d:stalk}
Given an out-ordered digraph on $S$, and 
undirected $K$-NN graph $(S,\mathcal{U}_{\Gamma} )$, 
 \begin{equation}\label{e:sizestalk}
     \tau_K: \mathcal{U}_{\Gamma} \to \Z_+
 \end{equation} 
supplies the number $\tau_K(\{x, y\})$ of distinct $z$ for which the 
2-simplex $\{xy, yz, xz\}$
is $\Gamma$-pertinent, and $xy$ is not its source. 
\end{defn}
In other words $\tau_K(\{x, y\}) + \sigma_K(\{x,z\})$ counts the
$\Gamma$-pertinent 2-simplices in which $xy$ is a 0-face, for each
$\{x, y\} \in \mathcal{U}_{\Gamma}$, where we take the in-sway $\sigma_K(\{x,z\})$ to be zero when
$x$ and $t$ are not mutual friends.

\begin{figure}
\caption{\textbf{Partitioned local depth on Table \ref{t:3conc10points}:}
\textit{Six edges, two clusters, three outliers. Cohesion shown as edge weight.
Compare with Figure \ref{f:rbl10pts}}.
}
\label{f:pald10pts}
\begin{center}
\scalebox{0.5}{\includegraphics{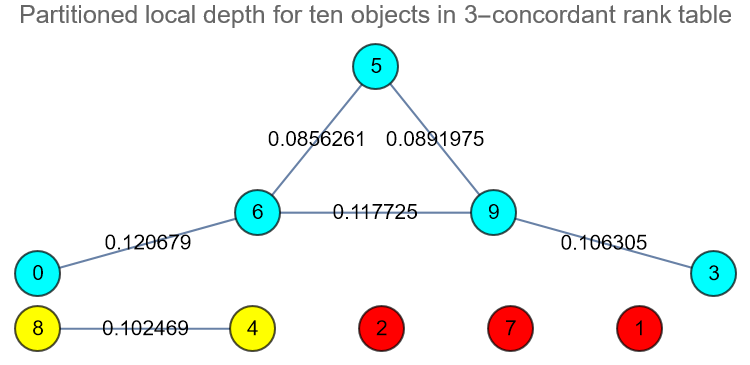} }
\end{center}
\end{figure}

\subsubsection{PaLD calculations}
The essence of PaLD \cite{ber} is to construct an asymmetric \textit{cohesion} $C_{x, z}$ as the sum
of the reciprocals of the sizes $2 + \tau_n(x,y)$ (here $K=n$)
 over $y \neq x$ such that $xz$ is the source in $\{xy, yz, xz\}$.
 The integer 2 refers to the inclusion of $x$ and $y$ in the count.
 When $x$ and $y$ are ``close'', meaning that both $r_x(y)$ and $r_y(x)$ are small,
 then $1/(2 + \tau_n(x,y))$ is relatively large,
 and therefore such a $y$ exerts more influence over the cohesion.
 The PaLD criterion for creating a link $\{x, z\}$ is
 that both $C_{x, z}$ and $C_{z, x}$ exceed a threshold defined 
 in terms of the trace of the cohesion matrix;
we omit details.

\subsubsection{Partitioned NN local depth}
 In practical contexts, computing (\ref{e:sizestalk}) entails cubic complexity in $n$. For large $n$ numerical rounding errors alone make cohesion
 hard to compute, regardless of run time.
 To mitigate this complexity, \cite{pannld} proposed an approximation
 to cohesion using $\Gamma$-neighbors, called PaNNLD, which involves
 randomization of comparisons missing from the out-ordered digraph.
 Complexity is bounded by the sum of squares of vertex degrees in the
 undirected $K$-NN graph.

 \subsubsection{Rank-based linkage}
Rank-based linkage takes a different approach,
  motivated not only by complexity considerations,
  but more importantly by category theory: see Theorem \ref{th:summary}.
The calculation in Algorithm \ref{a:rbl} uses uniform weighting, while taking $K \ll n$ so
that only $y$ adjacent to $x$ in the undirected $K$-NN graph $(S,\mathcal{U}_{\Gamma} )$
participate in the sum. This gives $O(n K^2)$ complexity.

 \subsubsection{Rank-based linkage with weighting}
One could compute the values (\ref{e:sizestalk}) (at least on 0-cells
of $\Gamma$-pertinent 2-simplices)
by inserting a couple of lines in Algorithm \ref{a:rbl},
assuming that $\tau_K$ has been initialized at zero for all pairs 
in $\mathcal{U}_{\Gamma}$.
If the \textsc{FirstElementIsSource} predicate in Algorithm \ref{a:rbl} returns true,
then increment both $\tau_K(\{x, y\})$ and $\tau_K(\{y,z\})$ by 1 (since neither $xy$ nor
$yz$ is the source of $\{xy, yz, xz\}$).
At the conclusion of Algorithm \ref{a:rbl}, both the
functions $\tau_K$ and $\sigma_K$ have been sufficiently evaluated.

Then there are two optional heuristics:
\begin{enumerate}
    \item
    Return $\sigma_K(\{x,y\})/(\sigma_K(\{x,y\}) + \tau_K(\{x, y\}))$, i.e.
the proportion of $\Gamma$-pertinent 2-simplices containing $xy$ in which $xy$ is the source, as a measure of linkage.
    \item
Alternatively build a weighted, but still symmetric, 
version of in-sway: run Algorithm \ref{a:rbl} again, but instead of incrementing $\sigma_K(\{x,z\})$ by 1
in the inner loop,
increment it by a weighted quantity such as
$1/ (2 + \min\{\tau_K(\{x, y\}), \tau_K(\{y,z\}) \})$ if 
\textsc{FirstElementIsSource} returns true. The effect will be that when $y$ is ``close'' to $x$ or $z$,
then the 2-simplex $\{xy, yz, xz\}$ exerts more influence on $\sigma_K(\{x,z\})$.
\end{enumerate}

These weighted versions are not expected to satisfy Theorem \ref{th:summary},
but are worth experimentation as a compromise between rank-based linkage and PaLD,
with $O(n K^2)$ complexity.

%%%%%%%%%%%%%%%%%%%%%%%%%%%%%%%%%%%%%%%%%%%%%%%%%%%%%%%%%%%%%%%%%%%
\section{Functorial properties of rank-based linkage}\label{s:functor}
\subsection{Overview}
In this section, we present some functorial properties of the rank-based linkage process.
The process from Section \ref{s:rbl-steps} can be broken into three stages, two of which are readily available in the literature.
The first and most important step of the rank-based linkage process takes an out-ordered digraph and produces an edge-weighted graph.
Next, the edge-weighted graph is pruned, subject to the weights of its edges.
Lastly, the pruned graph is used to create a partitioned set.

Section \ref{s:functormotivation} provides practical motivation for the rank-based linkage algorithm.
Section \ref{s:definitions} briefly discusses some constructions for digraphs, graphs, and edge-weighted graphs, which will be instrumental in building the rank-based linkage functor.
Section \ref{s:catoodigr} constructs the category of out-ordered digraphs, relating it back to traditional digraphs,
and Section \ref{s:linkagefunctor} constructs the key functor to edge-weighted graphs.
Section \ref{s:composition} combines the steps together into the functor implementing ranked-based linkage from out-ordered digraphs to partitioned sets.
Finally, Section \ref{s:applications} discusses the practical application of the rank-based linkage functor.
Unlike Section \ref{s:rblformal}, which assumes 3-concordance
 to simplify matters, calculations here apply to all out-ordered digraphs.

\subsection{Practical motivation}\label{s:functormotivation}
Suppose a data scientist applies unsupervised learning 
to a collection of objects $S$, with a specific comparator.
Clusters are created based on a $\Gamma$-neighbor graph on $S$.
Later further objects are added to the collection, giving a larger collection $S'$,
still using the same comparator. 

When unsupervised learning is applied
a second time to $S'$, it is not desirable for the original set of clusters
to ``break apart" (see Section \ref{s:partset} for precise language);
better for the elements of $S' \setminus S$ either to join existing
clusters, or else form new clusters. To achieve this, we 
show that rank-based linkage is a functor from
a category of out-ordered digraphs (Section
 \ref{s:catoodigr}) to a category of partitioned sets 
 (Section \ref{s:partset}). This desirable property is typically false for
 optimization-based clustering methods especially those which 
 specify the number of clusters in advance, such as $k$-means and non-negative matrix
 factorization.

Our presentation is modelled on 
Carlsson and Mémoli \cite{car}'s functorial approach to clustering
in the context of categories of finite metric spaces.
Indeed the rank-based linkage algorithm arose as a consequence
of the desire to emulate the results of \cite{car}.
The central technical issue is the definition of appropriate categories.
%We omit the proof that morphisms in these categories have associative composition.

\subsection{Definitions}\label{s:definitions}

In this section, we summarize some functorial relationships between categories of types of graphs, as  illustrated in Figure \ref{f:weightedgraphs}.
These relationships will be instrumental in proving that the process of rank-based linkage itself is also a functor from an appropriately constructed category.  Some of these results are well-known in the literature or are folklore results that can be proven without undue difficulty.  As such, some details will be omitted for brevity.  The intrepid reader is invited to investigate further in \cite{dorfler1980,grilliette9,grilliette6,hell1979,hell-nesetril,kilp2001}.

Section \ref{s:digraphs} briefly discusses the connections between graphs and digraphs, emphasizing the symmetric closure and its dual, the symmetric interior, both of which will be used in the construction of the rank-based linkage functor in Section \ref{s:linkagefunctor}.
Section \ref{s:partset} considers the category of partitioned sets, which is presented differently, but equivalently, to the category of \cite{car}.
Section \ref{s:wgraphs} concludes by introducing a category of edge-weighted graphs with edge-expansive maps, which will be the target of the rank-based linkage functor in Section \ref{s:linkagefunctor}.

Throughout this discussion, 1-edges and directed loops will be allowed in graphs and digraphs, respectively.

\subsubsection{Associations Between Graphs \& Digraphs}\label{s:digraphs}
\begin{figure}
$\xymatrix{
\Digra\ar@/_1.5pc/[dr]_{N^{\diamond}}\ar@/^1.5pc/[dr]^{N^{\star}}  &   &   &   \WGra\ar@/^/[dl]^{W_{t}^{\star}}\\
&   \SDigra\ar[ul]_{N}\ar@/_/[d]_{M^{\diamond}} &   \Gra\ar[l]_{Z}^{\cong}\ar@/^/[ur]^{W_{t}}\\
&   \Part\ar@/_/[u]_{M}
}$
\caption{ \textit{Graphs \& weighted Graphs}}\label{f:weightedgraphs}
\end{figure}

To set notation, let $\vec{V}(Q)$ and $\vec{E}(Q)$ be the vertex and edge sets, respectively, of a digraph $Q$.  Symmetric digraphs are a well-understood subclass of digraphs, which are deeply connected to (undirected) graphs.  We summarize these connections here, and refer the intrepid reader to \cite{grilliette9} for a more detailed exploration.  Formally, we take the following definition and notation.

\begin{defn}[{Symmetric digraph, \cite[p.\ 1]{hell-nesetril}}]
Let $\Digra$ be the category of digraphs with digraph homomorphisms.
A digraph $Q$ is \emph{symmetric} if $(x,y)\in\vec{E}(Q)$ implies $(y,x)\in\vec{E}(Q)$.  Let $\SDigra$ be the full subcategory of $\Digra$ consisting of all symmetric digraphs, and let $\xymatrix{\SDigra\ar[r]^{N} & \Digra}$ be the inclusion functor.
\end{defn}

Recall that a subcategory is \emph{reflective} if it is full and replete, and the inclusion functor admits a left adjoint \cite[Definition 3.5.2]{borceux1}.
Dually, a subcategory is \emph{coreflective} if it is full and replete, and the inclusion functor admits a right adjoint \cite[p.\ 119]{borceux1}.
The category $\SDigra$ is a full subcategory of $\Digra$ by definition, and repleteness is routine to demonstrate, so all that remains to consider is the existence of a left or a right adjoint.
Recall that the \emph{symmetric closure} of a binary relation $R$ is the smallest symmetric relation containing $R$, and the \emph{symmetric interior} of $R$ is the largest symmetric relation contained within $R$ \cite[p.\ 305]{floudas}.
Below, both constructions are applied to the edge set of a digraph, which is merely a binary relation on the vertex set.

\begin{defn}[{Symmetric closure, \cite[Definition 5.3]{grilliette9}}]\label{d:symclos}
For a digraph $Q$, define the symmetric digraph $N^{\diamond}(Q)$ by
\begin{itemize}
\item $\vec{V}N^{\diamond}(Q):=\vec{V}(Q)$,
\item $\vec{E}N^{\diamond}(Q):=\left\{(x,y):(x,y)\in\vec{E}(Q)\textrm{ or }(y,x)\in\vec{E}(Q)\right\}$.
\end{itemize}
\end{defn}

\begin{defn}[{Symmetric interior, \cite[Definition 5.5]{grilliette9}}]
For a digraph $Q$, define the symmetric digraph $N^{\star}(Q)$ by
\begin{itemize}
\item $\vec{V}N^{\star}(Q):=\vec{V}(Q)$,
\item $\vec{E}N^{\star}(Q):=\left\{(x,y):(x,y)\in\vec{E}(Q)\textrm{ and }(y,x)\in\vec{E}(Q)\right\}$.
\end{itemize}
\end{defn}

These two constructions correspond to the left and right adjoints of $N$, respectively.
The action of both adjoint functors is trivial on morphisms, which can be shown by means of the universal property of each.

\begin{comment}
\adam{I think these words should be defined $dots$ I think these functors should be defined before this statement is made}
\textcolor{red}{I agree with Adam. No idea what terms mean!}
\will{How does the exposition above read?} \adam{Much better, thanks} \textcolor{red}{I agree}
\end{comment}

For a graph $G$, let $V(G)$ and $E(G)$ be its vertex and edge sets, respectively.  An undirected graph can be thought of as a symmetric digraph, where each 2-edge is replaced with a directed 2-cycle and each 1-edge is replaced with a directed loop.  This natural association is formalized in an invertible functor between the category of graphs and the category of symmetric digraphs.

\begin{defn}[{Equivalent symmetric digraph, \cite[Definition 5.7]{grilliette9}}]\label{d:defineZ}
Let $\Gra$ be the category of graphs. For $\xymatrix{G\ar[r]^{f} & G'}\in\Gra$, define $Z:\Gra\to\SDigra$ by
\begin{itemize}
\item $\vec{V}Z(G):=V(G)$,
\item $\vec{E}Z(G):=\left\{(x,y):\{x,y\}\in E(G)\right\}$,
\item $Z(f):=f$.
\end{itemize}
\end{defn}

\subsubsection{Partitioned Sets}\label{s:partset}

Observe that a digraph is merely a ground set equipped with a binary relation upon that ground set.  When that relation is an equivalence relation, which partitions the ground set, we take the following definition.

\begin{defn}[{Partitioned set, \cite[p.\ 2]{larusson2006}}]
A digraph $X$ is a \emph{partitioned set} if ${\sim}_{X}:=\vec{E}(X)$ is an equivalence relation on $\vec{V}(X)$.  Consequently, a partitioned set is a type of symmetric digraph.  Let $\Part$ be the full subcategory of $\SDigra$ consisting of all partitioned sets, and let $\xymatrix{\Part\ar[r]^{M} & \SDigra}$ be the inclusion functor.
\end{defn}

Realizing partitioned sets in this way shows that $\Part$ is a reflective subcategory of $\SDigra$, where the reflector is the reflexive-transitive closure \cite[p.\ 3:2]{pous2018} of the edge set.  The equivalence relation generated in this process is the relation of connected components, where two vertices are related when there is a path from one to the other.  The proof of the universal property is analogous to that of the symmetric closure and will be omitted for brevity.

\begin{defn}[Reflexive-transitive closure]\label{d:rtclos}
For a symmetric digraph $Q$, define the partitioned set $M^{\diamond}(Q)$ by
\begin{itemize}
\item $\vec{V}M^{\diamond}(Q):=\vec{V}(Q)$,
\item $\vec{E}M^{\diamond}(Q):=\left\{
(x,y):
\exists\left(x_{k}\right)_{k=0}^{n}\left(\begin{array}{c}
x_{0}=x,\\
x_{n}=y,\\
\forall1\leq k\leq n\left(
\left(x_{k-1},x_{k}\right)\in\vec{E}(Q)
\vee
x_{k-1}=x_{k}
\right)
\end{array}\right)
\right\}
$.
\end{itemize}
Let $\xymatrix{Q\ar[r]^(0.4){\eta_{Q}} & MM^{\diamond}(Q)}\in\SDigra$ be the inclusion map.
\end{defn}

\begin{theorem}[Universal property]\label{d:reflexive-transitive}
If $\xymatrix{Q\ar[r]^(0.4){f} & M(X)}\in\SDigra$,
there is a unique
\[
\xymatrix{M^{\diamond}(Q)\ar[r]^(0.6){\hat{f}} & X}\in\Part
\]
 such that $M\left(\hat{f}\right)\circ\eta_{Q}=f$.
\end{theorem}

As with $N^{\diamond}$ and $N^{\star}$, the action of $M^{\diamond}$ on morphisms can be shown to be trivial by means of the universal property above.  Effectively, the left adjoint shows that digraph homomorphisms, and graph homomorphisms by means of the isomorphism $Z$, map connected components into connected components.  That is, the connected components of the domain refine the components induced by the codomain.
\begin{comment}
  \textcolor{red}{Could you give a one sentence interpretation? Morphisms of symmetric digraphs fit in with morphisms of partitioned sets?}
\adam{Maybe something like "morphisms of (symmetric) digraphs take vertices in the same connected component of the source to vertices in the same connected component of the target"?}
\textcolor{red}{RD: Yes, I like Adam's version.}
\will{
How does the above exposition read?  Does it setup the next paragraph about \cite{car}?
}
\adam{Looks good to me.}
\end{comment}

In \cite{car}, partitioned sets were defined in terms of a partition on the ground set, and homomorphisms of partitioned sets were required to refine partitions through preimage.  This ``set system'' representation of partitioned sets is isomorphic to the ``digraph'' representation that we have given above.  The correspondence between equivalence relations and partitions is well-understood, so the only lingering question concerns the homomorphism conditions, which we shall show are equivalent.  To that end, the following notation is taken for a partitioned set $X$ and $x\in\vec{V}(X)$:
\begin{itemize}
\item $A_{x,X}:=\left\{y\in\vec{V}(X):x{\sim}_{X}y\right\}$ is the equivalence class of $x$ in $X$;
\item $\mathcal{C}(X):=\vec{V}(X)/{\sim}_{X}$ is the set of all equivalence classes of $X$.
\end{itemize}

Recall the following definition of refinement, which is integral to morphisms in the ``set system'' representation.

\begin{defn}[{Refinement, \cite[p.\ 1]{apostolico1987}}]
Let $S$ be a set and $\mathcal{A},\mathcal{B}\subseteq\mathcal{P}(S)$ be partitions of $S$.  The partition $\mathcal{A}$ \emph{refines} $\mathcal{B}$ if for all $A\in\mathcal{A}$, there is $B\in\mathcal{B}$ such that $A\subseteq B$.
\end{defn}

Thus, we now show the equivalence of the two conditions.

\begin{prop}[Alternate morphism condition]\label{p:alternatemorphism}
Let $X$ and $X'$ be partitioned sets and $f:\vec{V}(X)\to\vec{V}\left(X'\right)$.  Then $\xymatrix{X\ar[r]^{f} & X'}\in\Part$ if and only if $\mathcal{C}(X)$ refines the family $\left\{f^{-1}(B):B\in\mathcal{C}\left(X'\right)\right\}$.
\end{prop}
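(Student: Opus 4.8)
The plan is to unwind both sides of the biconditional to statements about when two vertices $x,y \in \vec V(X)$ land in the same block downstairs, and show these coincide. First I would recall that, in the digraph representation, $\xymatrix{X\ar[r]^{f} & X'}\in\Part$ means precisely that $f$ is a digraph homomorphism, i.e. $x \sim_X y$ implies $f(x) \sim_{X'} f(y)$; equivalently, for every equivalence class $A \in \mathcal C(X)$, the image $f(A)$ is contained in a single class of $X'$, i.e. there is $B \in \mathcal C(X')$ with $A \subseteq f^{-1}(B)$. On the other side, $\mathcal C(X)$ refines $\{f^{-1}(B) : B \in \mathcal C(X')\}$ means by definition that for every $A \in \mathcal C(X)$ there is $B \in \mathcal C(X')$ with $A \subseteq f^{-1}(B)$. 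So the two conditions are literally the same statement once the homomorphism condition is rephrased, and the proof is essentially a matter of making this rephrasing explicit in both directions.

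Concretely I would argue as follows. For the forward direction, assume $f$ is a morphism of partitioned sets and fix $A \in \mathcal C(X)$, say $A = A_{x,X}$. Since $x \sim_X y$ for all $y \in A$, the homomorphism property gives $f(x) \sim_{X'} f(y)$, so every $f(y)$ lies in $B := A_{f(x),X'}$; hence $A \subseteq f^{-1}(B)$ with $B \in \mathcal C(X')$, which is exactly what refinement requires. For the converse, assume the refinement condition and take $(x,y) \in {\sim_X}$, i.e. $y \in A_{x,X} =: A$. By hypothesis there is $B \in \mathcal C(X')$ with $A \subseteq f^{-1}(B)$, so $f(x), f(y) \in B$, which forces $f(x) \sim_{X'} f(y)$, i.e. $(f(x),f(y)) \in {\sim_{X'}} = \vec E(X')$. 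Thus $f$ preserves edges and is a digraph homomorphism, hence a morphism in $\Part$.

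One small point worth addressing carefully is that $\{f^{-1}(B) : B \in \mathcal C(X')\}$ really is a partition of $\vec V(X)$ (so that the word ``refines'' applies in the sense of the cited definition): the sets $f^{-1}(B)$ are pairwise disjoint because the $B$'s are, and they cover $\vec V(X)$ because the $B$'s cover $\vec V(X')$ and $f$ is total; empty preimages may be discarded harmlessly, or one notes the definition of refinement tolerates them. I do not anticipate a genuine obstacle here — the statement is a translation between two equivalent bookkeeping conventions for the same notion — so the only ``hard part'' is purely expository: being scrupulous that the digraph-morphism condition $\vec E(X) \subseteq (f\times f)^{-1}(\vec E(X'))$ unpacks to the per-class containment used above, and not conflating refinement of $\mathcal C(X)$ by the $f^{-1}(B)$ family with the (false in general) reverse refinement.
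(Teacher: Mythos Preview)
Your proposal is correct and follows essentially the same approach as the paper: both directions proceed by picking a representative $x$ of an equivalence class, setting $B = A_{f(x),X'}$ in the forward direction, and in the converse using that $f(x),f(y)$ lie in a common block $B$ to conclude $f(x)\sim_{X'} f(y)$. Your added remark that $\{f^{-1}(B)\}$ forms a partition is a nice bit of hygiene the paper omits, and your converse is marginally slicker in that you do not route through an explicit $z\in B$ and transitivity as the paper does, but these are cosmetic differences only.
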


\begin{proof}

$(\Rightarrow)$
If $U\in\mathcal{C}(X)$, there is $x\in\vec{V}(X)$ such that $U=A_{x,X}$.
Let $B:=A_{f(x),X'}$.
If $y\in U$, then $x{\sim}_{X}y$, so $f(x){\sim}_{X'}f(y)$.
Hence, $f(y)\in B$, yielding $y\in f^{-1}(B)$.
Therefore, $U\subseteq f^{-1}(B)$.

$(\Leftarrow)$
Let $x,y\in\vec{V}(X)$ satisfy that $x{\sim}_{X}y$.
If $U:=A_{x,X}$, then $y\in U$.
There is $B\in\mathcal{C}\left(X'\right)$ such that $U\subseteq f^{-1}(B)$.
There is $z\in\vec{V}\left(X'\right)$ such that $B=A_{z,X'}$.
As $x,y\in f^{-1}(B)$, $f(x),f(y)\in B$,
so $f(x){\sim}_{X'}z{\sim}_{X'}f(y)$.
As ${\sim}_{X'}$ is transitive, $f(x){\sim}_{X'}f(y)$.

\end{proof}

While these two viewpoints are equivalent, we favor the ``digraph'' perspective due to the reflective relationship with symmetric digraphs presented in Theorem \ref{d:reflexive-transitive}.
To pass between graphs and ranking systems, we use edge-weighted graphs, as in the migration flows example of Section \ref{s:migration}.

\subsubsection{Edge-weighted Graphs}\label{s:wgraphs}
Edge-weighted graphs are not novel \cite[p.\ 48]{kruskal1956}, though we now consider a category of such objects.  
Formally, we take the following definitions for objects and morphisms.

\begin{defn}[Expansive homomorphism]
Let $G$ be a graph.  An \emph{edge-weight} on $G$ is a function $w:E(G)\to[0,\infty]$.  The pair $(G,w)$ is an \emph{edge-weighted graph}.
For edge-weighted graphs $(G,w)$ and $\left(G',w'\right)$, an \emph{edge-expansive homomorphism} from $(G,w)$ to $\left(G',w'\right)$ is a function $f:V(G)\to V\left(G'\right)$ satisfying the following conditions:
\begin{itemize}
\item (preserve adjacency) if $\{x,y\}\in E(G)$, then $\left\{f(x),f(y)\right\}\in E\left(G'\right)$;
\item (expansive) if $\{x,y\}\in E(G)$, then $w\left(\{x,y\}\right)\leq w'\left(\left\{f(x),f(y)\right\}\right)$.
\end{itemize}
Let $\WGra$ be the category of edge-weighted graphs with edge-expansive homomorphisms.
\end{defn}

Much like \cite[Definition 3.4.6]{grilliette5}, a traditional graph can be assigned a constant weight function, to which we give the following notation.

\begin{defn}[Constant-weight functor]
Fix $t\in[0,\infty]$.  For a graph $G$, define the edge-weighted graph $W_{t}(G):=\left(G,c_{G,t}\right)$, where $c_{G,t}\left(\{x,y\}\right):=t$.  For $\xymatrix{G\ar[r]^{f} & G'}\in\Gra$, define $W_{t}(f):=f$.  
\end{defn}
Routine calculations show that $W_{t}$ is a functor from $\Gra$ to $\WGra$.
While this functor is fully faithful, it admits a non-trivial right adjoint, where the edge set is trimmed only to those edges exceeding the fixed value $t$.  This is analogous to the unit-ball functor in functional analysis \cite{joyofcats,goodearl1990,grandis2007,pelletier1989,pelletier1993}, and the proof is nearly identical.

\begin{defn}[Cut-off operation]\label{d:cutoff}
Fix $t\in[0,\infty]$.  For an edge-weighted graph $(G,w)$, define the graph $W_{t}^{\star}(G,w)$ by
\begin{itemize}
\item $VW_{t}^{\star}(G,w):=V(G)$,
\item $EW_{t}^{\star}(G,w):=\left\{\{x,y\}\in E(G):w\left(\{x,y\}\right)\geq t\right\}$.
\end{itemize}
Let $\xymatrix{W_{t}W_{t}^{\star}(G)\ar[rr]^(0.6){\theta_{(G,w)}} & & G}\in\WGra$ be defined by $\theta_{(G,w)}(v):=v$.
\end{defn}

\begin{theorem}[Universal property]
Fix $t\in[0,\infty]$.  If $\xymatrix{W_{t}\left(G'\right)\ar[r]^(0.6){f} & G}\in\WGra$, there is a unique $\xymatrix{G'\ar[r]^(0.4){\hat{f}} & W_{t}^{\star}(G)}\in\Gra$ such that $\theta_{(G,w)}\circ W_{t}\left(\hat{f}\right)=f$.
\end{theorem}

Use of the universal property shows that $W_{t}^{\star}$ acts trivially on morphisms.  Moreover, $W_{\infty}$ is right adjoint to $W_{0}^{\star}$, which is the forgetful functor from $\WGra$ to $\Gra$.  
Using the abbreviation ``$L\dashv R$'' for ``$L$ is left adjoint to $R$'', write
 $W_{0}\dashv W_{0}^{\star}\dashv W_{\infty}\dashv W_{\infty}^{\star}$, by analogy to \cite[p.\ 176]{grandis2007}.

%\textcolor{red}{What is the $\dashv$ symbol?}
%\will{One would read ``$L\dashv R$'' as ``$L$ is left adjoint to $R$''.  This is the same symbology used in \cite[p.\ 176]{grandis2007} for his analogous result.  I was hoping to channel him here.}

\begin{theorem}[Secondary universal property]
If $\xymatrix{W_{0}^{\star}\left(G,w\right)\ar[r]^(0.6){f} & G}\in\Gra$, there is a unique $\xymatrix{(G,w)\ar[r]^(0.4){\hat{f}} & W_{\infty}(G)}\in\WGra$ such that $W_{0}^{\star}\left(\hat{f}\right)=f$.
\end{theorem}

Lastly, there is a natural inclusion between the different cutoff functors.  For an edge-weighted graph $(G,w)$ and $0\leq s\leq t\leq\infty$, notice that $EW_{t}^{\star}(G,w)\subseteq EW_{s}^{\star}(G,w)$.  Thus, $W_{t}^{\star}(G,w)$ is a subgraph of $W_{s}^{\star}(G,w)$, where potentially some edges have been removed.  However, more can be said, and to do so, we take the following notation.

\begin{defn}[Natural inclusion]
Say $0\leq s\leq t\leq\infty$.  For an edge-weighted graph $(G,w)$, let $\xymatrix{W_{t}^{\star}(G,w)\ar[rr]^{\rho^{s,t}_{(G,w)}} & & W_{s}^{\star}(G,w)}\in\Gra$ be the inclusion map, and let $\rho^{s,t}:=\left(\rho^{s,t}_{(G,w)}\right)_{(G,w)\in\ob(\WGra)}$.
\end{defn}

As $\rho^{s,t}_{(G,w)}$ is bijective as a function, it is a bimorphism in $\Gra$.  Moreover, a quick diagram chase with a morphism $\xymatrix{(G,w)\ar[r]^{f} & \left(G',w'\right)}\in\WGra$ demonstrates that $\rho^{s,t}$ is a natural transformation from $W_{t}^{\star}$ to $W_{s}^{\star}$.  Likewise, a check shows that taken as a family, $\left(\rho^{s,t}\right)_{s\leq t\in[0,\infty]}$ acts like an inverse system in the following way.

\begin{prop}[Inverse system]
If $0\leq s\leq t\leq\infty$,
then $\rho^{s,t}$ is a natural bimorphism from $W_{t}^{\star}$ to $W_{s}^{\star}$.
Moreover, if $0\leq u\leq s\leq t\leq\infty$, then
$\rho^{u,s}\circ\rho^{s,t}=\rho^{u,t}$ and $\rho^{t,t}=id_{W_{t}^{\star}}$.
\end{prop}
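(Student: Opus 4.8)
The plan is to reduce the entire statement to one elementary fact already noted above: for $s\le t$ one has $EW_{t}^{\star}(G,w)\subseteq EW_{s}^{\star}(G,w)$, since an edge of weight at least $t$ has weight at least $s$. Hence the identity function on $V(G)$ is a bona fide graph homomorphism $W_{t}^{\star}(G,w)\to W_{s}^{\star}(G,w)$, and this is exactly the component $\rho^{s,t}_{(G,w)}$. Everything else then follows by checking that the relevant diagrams commute on vertex sets, which is automatic.

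First I would verify that each $\rho^{s,t}_{(G,w)}$ is a bimorphism in $\Gra$. Since it is the vertex-identity map, it is injective and surjective on vertices; a short argument shows that such a homomorphism is both monic and epic in $\Gra$ (if $\rho^{s,t}_{(G,w)}\circ g=\rho^{s,t}_{(G,w)}\circ h$, then $g$ and $h$ agree on vertices, hence as graph homomorphisms, and dually for post-composition), so it is a bimorphism.

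Next I would establish naturality of $\rho^{s,t}$ as a transformation $W_{t}^{\star}\Rightarrow W_{s}^{\star}$. Fix $f\colon(G,w)\to(G',w')$ in $\WGra$. Because $W_{t}^{\star}$ and $W_{s}^{\star}$ act trivially on morphisms and each component of $\rho^{s,t}$ is a vertex-identity map, the two composites around the naturality square with corners $W_{t}^{\star}(G,w)$, $W_{s}^{\star}(G,w)$, $W_{t}^{\star}(G',w')$, $W_{s}^{\star}(G',w')$ are both equal to $f$ as functions $V(G)\to V(G')$; since all four objects are graphs on the vertex sets for which these maps are already homomorphisms, the composites coincide in $\Gra$. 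Combining with the previous step, $\rho^{s,t}$ is a natural bimorphism.

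Finally I would check the inverse-system identities. For $u\le s\le t$, both $\rho^{u,s}_{(G,w)}\circ\rho^{s,t}_{(G,w)}$ and $\rho^{u,t}_{(G,w)}$ are graph homomorphisms $W_{t}^{\star}(G,w)\to W_{u}^{\star}(G,w)$ --- the codomain of the latter being legitimate because $u\le t$ --- and each is the vertex-identity function, so they are equal; letting $(G,w)$ vary gives $\rho^{u,s}\circ\rho^{s,t}=\rho^{u,t}$. Likewise $\rho^{t,t}_{(G,w)}$ is the vertex-identity endomorphism of $W_{t}^{\star}(G,w)$, i.e.\ $id_{W_{t}^{\star}(G,w)}$, and varying $(G,w)$ gives $\rho^{t,t}=id_{W_{t}^{\star}}$. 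I do not expect any genuine obstacle here: the one point requiring care is keeping track of which graph is the domain and which the codomain at each stage, and this is entirely governed by the monotone nesting $EW_{t}^{\star}(G,w)\subseteq EW_{s}^{\star}(G,w)$ for $s\le t$.
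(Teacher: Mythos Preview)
Your proposal is correct and follows exactly the approach the paper sketches just before the proposition: the paper notes that $\rho^{s,t}_{(G,w)}$ is bijective as a function (hence a bimorphism), that naturality is a ``quick diagram chase,'' and that the inverse-system identities are ``a check,'' without spelling out details. You have simply filled in those details in the expected way, so there is nothing to add.
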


\subsection{Out-ordered Digraphs}\label{s:catoodigr}
Out-ordered digraphs have already been introduced in Definition \ref{d:outorddigr}: a \textit{total order} is placed on the out-neighbor set $\Gamma(x)$, for each 
vertex $x$. This class of graph-like structures will be used to codify the rank-based linkage process as a functor. 
Indeed both Algorithm \ref{a:rbl} and the functorial computations can be performed under the weaker assumption of a 
\textit{partial order} on each out-neighbor set, as might occur in practical application of rank-based linkage to cases of missing data, although 
incomparable out-neighbors $y, z \in \Gamma(x)$ deny orientation from the 1-simplex $\{xy, xz\}$.

\begin{defn}[Partially out-ordered digraph]\label{d:pood}
A \emph{partially out-ordered digraph} is a triple $(S,\Gamma,\preceq)$ satisfying the following conditions:
\begin{itemize}
\item $S$ is the \emph{vertex set};
\item $\Gamma:S\to\mathcal{P}(S)$ is the \emph{neighborhood function}, i.e.\ $(x,y)$ is an
edge if $y \in \Gamma(x)$;
\item $\preceq:S\to\mathcal{P}(S\times S)$ is the \emph{neighborhood order}, where $\preceq_{x}:=\preceq(x)$ is a partial order on $\Gamma(x)$ for all $x\in S$.
\end{itemize}
\end{defn}
To clarify the last bullet, $y \preceq_x z$ means that $(y,z)$ is in the image of $x$ under $\preceq$,
which is a set of ordered pairs of elements of $\Gamma(x)$.

Indeed, this structure is merely a digraph, where each out-neighborhood has been endowed with a partial order, ranking its out-neighbors.  One can always impose this structure on a directed graph in a trivial way using the discrete ordering.  However, the following example takes the form of the input to the rank-based linkage algorithm, as we saw in Section \ref{s:datainput}.

%\will{I reworded your transitionary sentence and integrated it into the preceding paragraph.}

%\begin{ex}[Digraph with discrete ordering]\label{e:trivialorder}
%Let $Q$ be a digraph.  Define a partially out-ordered digraph $(S,\Gamma,\preceq)$ by
%\begin{itemize}
%\item (vertex set) $S:=\vec{V}(Q)$;
%\item (out-neighborhood) $\Gamma(x):=\left\{y\in S:(x,y)\in\vec{E}(Q)\right\}$;
%\item (discrete order) for $x\in S$ and $y,z\in\Gamma(x)$, define $y\preceq_{x}z \iff y=z$.
%\end{itemize}
%\end{ex}

\begin{ex}[Edge-weighted digraph]\label{e:wodg}
Let $Q$ be a digraph and $w:\vec{E}(Q)\to[0,\infty]$ be a weight function on the edges.  Define a partially out-ordered digraph $(S,\Gamma,\preceq)$ by
\begin{itemize}
\item (vertex set) $S:=\vec{V}(Q)$;
\item (out-neighborhood) $\Gamma(x):=\left\{y\in S:(x,y)\in\vec{E}(Q)\right\}$;
\item (weight order) for $x\in S$ and $y,z\in\Gamma(x)$, define $y\preceq_{x}z$ iff one of the two conditions holds:
\begin{enumerate}
\item $y=z$,
\item $w(x,y)>w(x,z)$.
\end{enumerate}
\end{itemize}
\end{ex}

The homomorphisms between partially out-ordered digraphs have some natural qualities, mainly preserving the neighborhood and the order.  However, we enforce two other conditions, which will be needed in Theorem \ref{th:functor}.  First, the morphisms must be injections, potentially adding elements but never quotienting them together.  Second, any new elements that have been appended to a neighborhood must be ranked strictly farther than those already present.  Effectively, this latter condition means that a neighborhood in the codomain is an ordinal sum \cite[p.\ 547]{morgado1961} of the image of the neighborhood in the domain with some new elements.

\begin{defn}[Neighborhood-ordinal injection]\label{d:noddi}
Given two partially out-ordered digraphs $(S,\Gamma,\preceq)$ and $\left(S',\Gamma',\preceq'\right)$, a \emph{neighborhood-ordinal injection} is a function $\iota:S\to S'$ satisfying the following conditions:
\begin{itemize}
\item (neighborhood-preserving) for all $x,y\in S$, $y\in\Gamma(x)$ implies $\iota(y)\in\Gamma'\left(\iota(x)\right)$;
\item (order-preserving) for all $x\in S$ and $y,z\in\Gamma(x)$, $y\preceq_{x}z$ implies $\iota(y){\preceq'}_{\iota(x)}\iota(z)$;
\item (one-to-one) for all $x,y\in S$, $\iota(x)=\iota(y)$ implies $x=y$;
\item (ordinal sum)\footnote{
Notation: if $\xymatrix{S\ar[r]^{f} & S'}\in\mathbf{Set}$, 
then $\xymatrix{\mathcal{P}(S)\ar[r]^{\mathcal{P}(f)} & \mathcal{P}\left(S'\right)}\in\mathbf{Set}$ is determined by
\begin{center}$
\mathcal{P}(f)(A)=\left\{f(s):s\in A\right\}, \quad A\in\mathcal{P}(S).
$\end{center}
} for all $x\in S$ and $w\in\Gamma'\left(\iota(x)\right)$, 
$w\not\in\mathcal{P}(\iota)\left(\Gamma(x)\right)$

implies $\iota(y){\prec'}_{\iota(x)}w$ for all $y\in\Gamma(x)$.
\end{itemize}
\end{defn}

\noindent \textbf{Remark: }The rationale for Definition \ref{d:noddi} comes from the oriented 
simplicial complex perspective of Section \ref{s:scprt}.
Suppose there is a 2-simplex $\{xy, yz, xz\}$ whose boundary has an orientation given by $(S,\Gamma,\preceq)$,
making $xz$ the source 0-simplex. Injection from $(S,\Gamma,\preceq)$ into $\left(S',\Gamma',\preceq'\right)$
must preserve orientations. In particular 0-simplex $xz$ retains its status as source, which ensures that the in-sway $\sigma(x, z)$ 
cannot decrease. The effect is that $x$ and $z$ cannot become ``unglued'' from each other when rank-based linkage is applied to the range.

While identity maps satisfy these conditions trivially, composition of morphisms is not trivial due to the ordinal sum condition.

\begin{lem}[Composition]
Let $(S,\Gamma,\preceq)$, $\left(S',\Gamma',\preceq'\right)$, $\left(S'',\Gamma'',\preceq''\right)$ be partially out-ordered digraphs.
Say $\iota$ is a neighborhood-ordinal injection from $(S,\Gamma,\preceq)$ to $\left(S',\Gamma',\preceq'\right)$,
and $\kappa$ is a neighborhood-ordinal injection from $\left(S',\Gamma',\preceq'\right)$ to $\left(S'',\Gamma'',\preceq''\right)$.
Then $\kappa\circ\iota$ is a neighborhood-ordinal injection from $(S,\Gamma,\preceq)$ to $\left(S'',\Gamma'',\preceq''\right)$.
\end{lem}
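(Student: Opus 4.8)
The plan is to verify each of the four defining conditions of Definition \ref{d:noddi} for the composite $\kappa\circ\iota$, taking the first three in sequence as essentially routine and reserving the ordinal-sum condition for the end, since that is where the interaction between the two maps is nontrivial.

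First I would handle the three easy conditions. For \emph{neighborhood-preserving}: if $y\in\Gamma(x)$, then $\iota(y)\in\Gamma'(\iota(x))$ because $\iota$ is neighborhood-preserving, and then $\kappa(\iota(y))\in\Gamma''(\kappa(\iota(x)))$ because $\kappa$ is; this is exactly $(\kappa\circ\iota)(y)\in\Gamma''((\kappa\circ\iota)(x))$. For \emph{order-preserving}: if $y\preceq_x z$ with $y,z\in\Gamma(x)$, apply order-preservation of $\iota$ to get $\iota(y)\preceq'_{\iota(x)}\iota(z)$ (and note $\iota(y),\iota(z)\in\Gamma'(\iota(x))$ by the previous bullet), then apply order-preservation of $\kappa$ to conclude $(\kappa\circ\iota)(y)\preceq''_{(\kappa\circ\iota)(x)}(\kappa\circ\iota)(z)$. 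For \emph{one-to-one}: a composition of injections is an injection, so $\kappa(\iota(x))=\kappa(\iota(y))$ forces $\iota(x)=\iota(y)$ and hence $x=y$.

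The main obstacle is the \emph{ordinal sum} condition. Fix $x\in S$ and $w\in\Gamma''((\kappa\circ\iota)(x))$ with $w\notin\mathcal{P}(\kappa\circ\iota)(\Gamma(x))$; I must show $(\kappa\circ\iota)(y)\prec''_{(\kappa\circ\iota)(x)}w$ for every $y\in\Gamma(x)$. The natural split is according to whether $w$ lies in $\mathcal{P}(\kappa)(\Gamma'(\iota(x)))$ or not. If $w=\kappa(w')$ for some $w'\in\Gamma'(\iota(x))$, then $w'\notin\mathcal{P}(\iota)(\Gamma(x))$: otherwise $w'=\iota(u)$ for some $u\in\Gamma(x)$, giving $w=\kappa(\iota(u))\in\mathcal{P}(\kappa\circ\iota)(\Gamma(x))$, a contradiction. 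The ordinal-sum property of $\iota$ then gives $\iota(y)\prec'_{\iota(x)}w'$ for all $y\in\Gamma(x)$, and order-preservation of $\kappa$ (which also preserves strict inequalities, since it is injective and order-preserving — a point I would state explicitly, as it is the one slightly delicate fact) promotes this to $\kappa(\iota(y))\prec''_{\kappa(\iota(x))}\kappa(w')=w$. In the remaining case $w\notin\mathcal{P}(\kappa)(\Gamma'(\iota(x)))$, the ordinal-sum property of $\kappa$ applied at the vertex $\iota(x)\in S'$ gives $\kappa(v)\prec''_{\kappa(\iota(x))}w$ for all $v\in\Gamma'(\iota(x))$; specializing to $v=\iota(y)$ (valid since $\iota(y)\in\Gamma'(\iota(x))$) yields the claim.

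The only genuinely nonroutine observation, which I would isolate as a small remark before the case analysis, is that an injective order-preserving map between posets is automatically strict-order-preserving: if $a\prec b$ then $a\preceq b$ and $a\neq b$, so the image satisfies $f(a)\preceq f(b)$ and, by injectivity, $f(a)\neq f(b)$, hence $f(a)\prec f(b)$. With that in hand, all four conditions follow, and the composite $\kappa\circ\iota$ is a neighborhood-ordinal injection, completing the proof. I would keep the writeup short, flagging the two-case structure of the ordinal-sum verification as the crux and leaving the first three bullets as one-line checks.
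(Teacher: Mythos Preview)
Your proof is correct and follows essentially the same approach as the paper: dispose of the three routine conditions quickly, then split the ordinal-sum verification into the two cases $w\in\mathcal{P}(\kappa)(\Gamma'(\iota(x)))$ and $w\notin\mathcal{P}(\kappa)(\Gamma'(\iota(x)))$. The only cosmetic difference is how strictness is obtained in the first case: the paper applies order-preservation of $\kappa$ to get $\preceq''$ and then rules out equality by noting it would force $w\in\mathcal{P}(\kappa\circ\iota)(\Gamma(x))$, whereas you invoke the cleaner observation that an injective order-preserving map is strict-order-preserving---both arguments are valid and amount to the same thing.
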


\begin{proof}

It is routine to show that composition preserves neighborhoods, orders, and monomorphisms.
To show the ordinal sum condition,
let $x\in S$ and $w\in\Gamma''\left((\kappa\circ\iota)(x)\right)$ satisfy that $w\not\in\mathcal{P}(\kappa\circ\iota)\left(\Gamma(x)\right)$.
Note that
\begin{center}$
\mathcal{P}(\kappa\circ\iota)\left(\Gamma(x)\right)
\subseteq\mathcal{P}(\kappa)\left(\Gamma'\left(\iota(x)\right)\right)
\subseteq\Gamma''\left((\kappa\circ\iota)(x)\right).
$\end{center}
Hence, consider the following two cases.
\begin{enumerate}

\item Assume that $w\not\in\mathcal{P}(\kappa)\left(\Gamma'\left(\iota(x)\right)\right)$.
Then $\kappa(u)$ precedes $w$ according to the ranking at $\kappa\circ\iota(x)$ for all $u\in\Gamma'(\iota(x))$.
%Then, $\kappa(u){\prec''}_{(\kappa\circ\iota)(x)}w$ for all $u\in\Gamma'(\iota(x))$.
In particular if $y\in\Gamma(x)$, then $\iota(y)\in\Gamma'(\iota(x))$, so 
$\kappa\circ\iota(y)$ precedes $w$ according to $\kappa\circ\iota(x)$, written as
$\kappa\circ\iota(y){\prec''}_{\kappa\circ\iota(x)}w$.

\item Assume that $w\in\mathcal{P}(\kappa)\left(\Gamma'\left(\iota(x)\right)\right)$.
Then, there is $u\in\Gamma'(\iota(x))$ such that $\kappa(u)=w$.
If $u=\iota(v)$ for some $v\in\Gamma(x)$, then $w=(\kappa\circ\iota)(v)$,
contradicting that $w\not\in\mathcal{P}(\kappa\circ\iota)\left(\Gamma(x)\right)$.
Thus, $u\not\in\mathcal{P}(\iota)\left(\Gamma(x)\right)$.
Therefore, $\iota(y){\prec'}_{\iota(x)}u$ for all $y\in\Gamma(x)$,
meaning $(\kappa\circ\iota)(y){\preceq''}_{(\kappa\circ\iota)(x)}\kappa(u)=w$ for all $y\in\Gamma(x)$.
If $(\kappa\circ\iota)(y)=w$ for some $y\in\Gamma(x)$, then $w\in\mathcal{P}(\kappa\circ\iota)\left(\Gamma(x)\right)$,
contrary to the standing assumption.
Therefore, $(\kappa\circ\iota)(y){\prec''}_{(\kappa\circ\iota)(x)}w$.

\end{enumerate}

\end{proof}

Let $\ood$ be the category of partially out-ordered digraphs with neighborhood-ordinal injections.  Using a constant weight in Example \ref{e:wodg}, every digraph can be given a neighborhood order.  However, the reverse is also true, that the neighborhood order can be forgotten to return a traditional digraph.  This forgetful functor will be instrumental in the next section to build the linkage functor.  The proof of functoriality is routine.

\begin{defn}[Forgetful functor]
For a partially out-ordered digraph $(S,\Gamma,\preceq)$, define the digraph $F(S,\Gamma,\preceq)$ by
\begin{itemize}
\item $\vec{V}F(S,\Gamma,\preceq):=S$,
\item $\vec{E}F(S,\Gamma,\preceq):=\left\{(x,y):y\in\Gamma(x)\right\}$.
\end{itemize}
For $\xymatrix{(S,\Gamma,\preceq)\ar[r]^{\iota} & \left(S',\Gamma',\preceq'\right)}\in\ood$, define $F(\iota):=\iota$.
\end{defn}

\subsection{Linkage Functor}\label{s:linkagefunctor}

In this section, we construct the linkage functor, which takes  a partially out-ordered digraph and returns an edge-weighted graph.  Taking motivation from Section \ref{s:outorddigr}, we define different notions of ``friendship'', which will be used to count elements toward the in-sway of Algorithm \ref{a:rbl}.  Section \ref{s:friendship} provides the definitions formally and ties each to the different graph constructions from Section \ref{s:digraphs}.  Section \ref{s:construction} uses these notions to construct the linkage graph of Section \ref{s:rbl-steps}, and demonstrates that the construction is functorial.

\subsubsection{Friendship Notions}\label{s:friendship}

Following the intuition of Section \ref{s:outorddigr}, we take the following formal definitions of ``friend'' and ``mutual friends''.  We also make the definition of ``acquaintances'', which will be dual to and weaker than ``mutual friends''.

\begin{defn}[Friends]
Let $(S,\Gamma,\preceq)$ be a partially out-ordered digraph.  For $x,y\in S$, $y$ is a \emph{friend} of $x$ if $y\in\Gamma(x)$.  Likewise, $x$ and $y$ are \emph{mutual friends} if $y$ is a friend of $x$ and $x$ is a friend of $y$.
Dually, $x$ and $y$ are \emph{acquaintances} if either $y$ is a friend of $x$ or $x$ is a friend of $y$.
\end{defn}

Each of these friendship notions in $(S,\Gamma,\preceq)$ corresponds to a type of edge in $F(S,\Gamma,\preceq)$.  The proof of each characterization is routine and left to the reader. Recall the functor $Z$ from
Definition \ref{d:defineZ}.

\begin{lem}[Characterization of friends]\label{l:friendship}
Let $(S,\Gamma,\preceq)$ be a partially out-ordered digraph and $x,y\in S$.
\begin{enumerate}
\item $y$ is a friend of $x$ iff $(x,y)\in\vec{E}F(S,\Gamma,\preceq)$.
\item $x$ and $y$ are mutual friends iff $\{x,y\}\in EZ^{-1}N^{\star}F(S,\Gamma,\preceq)$.
\item $x$ and $y$ are acquaintances iff $\{x,y\}\in EZ^{-1}N^{\diamond}F(S,\Gamma,\preceq)$.
\end{enumerate}
\end{lem}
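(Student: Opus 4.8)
The plan is to unwind each of the three claims directly from the definitions of the functors $F$, $N^{\star}$, $N^{\diamond}$, $Z$, and the characterizations of their edge sets given in Definitions \ref{d:symclos}, \ref{d:defineZ}, and the definition of the symmetric interior. The statement is a routine chain of ``iff''s, and the only mild subtlety is bookkeeping: $F$ produces a digraph, $N^{\star}$ and $N^{\diamond}$ land in $\SDigra$, and $Z^{-1}$ translates a symmetric digraph back into an undirected graph, so one must keep track of when an edge is an ordered pair and when it is an unordered pair.

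\begin{proof}
For (1), by definition of the forgetful functor $F$ we have $\vec{E}F(S,\Gamma,\preceq)=\{(x,y):y\in\Gamma(x)\}$, so $(x,y)\in\vec{E}F(S,\Gamma,\preceq)$ holds if and only if $y\in\Gamma(x)$, which is precisely the definition of $y$ being a friend of $x$.

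For (2), apply the symmetric interior to the digraph $Q:=F(S,\Gamma,\preceq)$. By definition, $\vec{E}N^{\star}(Q)=\{(x,y):(x,y)\in\vec{E}(Q)\text{ and }(y,x)\in\vec{E}(Q)\}$. Using part (1), $(x,y)\in\vec{E}(Q)$ and $(y,x)\in\vec{E}(Q)$ means $y\in\Gamma(x)$ and $x\in\Gamma(y)$, i.e.\ $x$ and $y$ are mutual friends. Since $N^{\star}(Q)$ is symmetric, $Z^{-1}$ is defined on it, and by Definition \ref{d:defineZ} the edge set of $Z^{-1}N^{\star}(Q)$ consists of the unordered pairs $\{x,y\}$ with $(x,y)\in\vec{E}N^{\star}(Q)$. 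Hence $\{x,y\}\in EZ^{-1}N^{\star}F(S,\Gamma,\preceq)$ if and only if $x$ and $y$ are mutual friends.

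For (3), the argument is identical with $N^{\star}$ replaced by $N^{\diamond}$: by Definition \ref{d:symclos}, $\vec{E}N^{\diamond}(Q)=\{(x,y):(x,y)\in\vec{E}(Q)\text{ or }(y,x)\in\vec{E}(Q)\}$, which by part (1) says $y\in\Gamma(x)$ or $x\in\Gamma(y)$, i.e.\ $x$ and $y$ are acquaintances. Passing through $Z^{-1}$ as before turns this into the unordered-pair condition $\{x,y\}\in EZ^{-1}N^{\diamond}F(S,\Gamma,\preceq)$.
\end{proof}

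I do not anticipate a genuine obstacle here; the one point requiring a little care is making sure that in parts (2) and (3) the symmetric digraph really does lie in the domain of $Z^{-1}$ (which it does by construction, since $N^{\star}$ and $N^{\diamond}$ always produce symmetric digraphs), so that the translation between ordered and unordered edges via $Z^{-1}$ is legitimate. Everything else is just substitution of definitions.
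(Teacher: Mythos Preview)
Your proof is correct and is exactly the kind of routine unwinding of definitions the paper has in mind; indeed, the paper does not write out a proof at all, stating only that ``the proof of each characterization is routine and left to the reader.'' Your careful bookkeeping about ordered versus unordered edges via $Z^{-1}$ is the only point requiring attention, and you handle it properly.
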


Carrying the relationship metaphor forward, one can consider a ``third-wheel'', someone who is looking at a relationship between two mutual friends from the outside.  This idea can be compared to a parasocial relationship of a viewer of two content streamers, who commonly promote one another's content.

To formalize this notion, let $x$ and $z$ be the mutual friends, and let $y$ be the outside observer.  The most natural situation would be for $x$ and $z$ to be friends of $y$, i.e.\ $y$ watches the content of both creators.  However, it is possible that $y$ is also a creator, so $x$ or $z$ could frequent the content of $y$.  Thus, we must also consider the cases when $y$ is a friend of $x$ or $z$, yielding the three cases listed in Figure \ref{f:parasocial}.  Such $y$ are the ``voters'' for in-sway from Section \ref{s:rbl-steps}, and we take the following definition as the set of all such voters.

\begin{figure}
\begin{center}$\begin{array}{ccc}
\xymatrix{
x\ar@/^/[rr]   &   &   z\ar@/^/[ll]\\
&   y\ar[ul]\ar[ur]
}
&
\xymatrix{
x\ar@/^/[rr]   &   &   z\ar@/^/[ll]\\
&   y\ar@{<-}[ul]\ar[ur]
}
&
\xymatrix{
x\ar@/^/[rr]   &   &   z\ar@/^/[ll]\\
&   y\ar[ul]\ar@{<-}[ur]
}
\end{array}$\end{center}
\caption{In-sway Cases}
\label{f:parasocial}
\end{figure}

\begin{defn}[Third-wheels]
Let $(S,\Gamma,\preceq)$ be a partially out-ordered digraph.  For $x,z\in S$, define the following set:
\begin{center}$\begin{array}{rcl}
\mathscr{F}_{S,\Gamma}(x,z)
&   :=  &   \left\{y\in S:x\in\Gamma(y),z\in\Gamma(y)\right\}\\
&   &   \cup\left\{y\in S:y\in\Gamma(x),z\in\Gamma(y)\right\}\\
&   &   \cup\left\{y\in S:x\in\Gamma(y),y\in\Gamma(z)\right\}.
\end{array}$\end{center}
\end{defn}

Please note that we purposefully omit the situation where $y$ is a friend of $x$ and $z$, but $x$ and $z$ are not friends of $y$.  Returning to the content creator analogy, this is the case where $y$ does not frequent the content of either $x$ or $z$, but they both frequent the content of $y$.  Such a $y$ would not vote, through methods such as likes or subscriptions, on the content of either $x$ or $z$.

Moreover, the conditions defining the set $\mathscr{F}_{(S,\Gamma)}(x,z)$ are equivalent to those used to iterate Algorithm \ref{a:rbl}, noting that the underlying graph of $(S,\Gamma,\preceq)$ is precisely the graph corresponding to the symmetric closure of $F(S,\Gamma,\preceq)$.

\begin{lem}[Alternate characterization]\label{l:alternatef}
Let $(S,\Gamma,\preceq)$ be a partially out-ordered digraph and fix $x,z\in S$.  For all $y\in S$, $y\in\mathscr{F}_{(S,\Gamma)}(x,z)$ if and only if the following conditions hold:
\begin{enumerate}
\item $\{x,z\}\cap\Gamma(y)\neq\emptyset$,
\item $\{x,y\}\in EZ^{-1}N^{\diamond}F(S,\Gamma,\preceq)$,
\item $\{z,y\}\in EZ^{-1}N^{\diamond}F(S,\Gamma,\preceq)$.
\end{enumerate}
Consequently, $\mathscr{F}_{(S,\Gamma)}(x,z)=\mathscr{F}_{(S,\Gamma)}(z,x)$.
\end{lem}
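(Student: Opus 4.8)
The plan is to reduce everything to Lemma~\ref{l:friendship}(3), which identifies the condition $\{x,y\}\in EZ^{-1}N^{\diamond}F(S,\Gamma,\preceq)$ with the statement that $x$ and $y$ are acquaintances, i.e.\ $y\in\Gamma(x)$ or $x\in\Gamma(y)$. Under this dictionary, conditions (2) and (3) say precisely that $y$ is an acquaintance of $x$ and an acquaintance of $z$, so the lemma becomes a purely set-theoretic claim relating the three pieces in the definition of $\mathscr{F}_{(S,\Gamma)}(x,z)$ to the pair of conditions ``$\{x,z\}\cap\Gamma(y)\neq\emptyset$'' and ``$y$ is an acquaintance of both $x$ and $z$''. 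No concordance hypothesis is needed.

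For the forward implication I would take $y\in\mathscr{F}_{(S,\Gamma)}(x,z)$ and split into the three cases according to which of the three sets in the definition contains $y$. In each case one of $x$, $z$ lies in $\Gamma(y)$, which gives (1); and in each case the two membership conditions defining that set directly exhibit $y$ as an acquaintance of $x$ and of $z$, which gives (2) and (3). This is three short lines.

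For the converse, assume (1), (2), (3). By (1), at least one of $x\in\Gamma(y)$ and $z\in\Gamma(y)$ holds, and I would branch on this. If both hold, $y$ lies in the first defining set. If $x\in\Gamma(y)$ but $z\notin\Gamma(y)$, then because $y$ is an acquaintance of $z$ we must have $y\in\Gamma(z)$, so $y$ lies in the third set. Symmetrically, if $z\in\Gamma(y)$ but $x\notin\Gamma(y)$, acquaintance with $x$ forces $y\in\Gamma(x)$ and $y$ lies in the second set. (The degenerate possibility $x=z$ collapses the branches but causes no trouble.) The only point requiring care is that this case split be exhaustive, which it is, since it is driven entirely by the nonempty subset $\{x,z\}\cap\Gamma(y)$; beyond that there is no real obstacle.

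Finally, the equality $\mathscr{F}_{(S,\Gamma)}(x,z)=\mathscr{F}_{(S,\Gamma)}(z,x)$ is immediate from the characterization just established, since it is manifestly symmetric under interchanging $x$ and $z$: condition (1) is symmetric, and (2) and (3) merely swap. Equivalently, one can note directly that interchanging $x$ and $z$ fixes the first defining set and transposes the second and third.
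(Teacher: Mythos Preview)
Your argument is correct and complete. The paper does not actually supply a proof of this lemma, remarking only that it is ``tedious, but routine''; your case analysis via Lemma~\ref{l:friendship}(3) is precisely the routine verification the authors have in mind, and there is nothing to add.
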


The proof of the lemma is tedious, but routine, and provides an alternate characterization of the set $\mathscr{F}_{(S,\Gamma)}(x,z)$:  all $y$ that are acquaintances of both $x$ and $z$, but have at least one of $x$ and $z$ as a friend.

\subsubsection{Construction}\label{s:construction}

At last, the in-sway for a pair of mutual friends can be computed as detailed in Algorithm \ref{a:rbl}, creating the linkage graph described in Section \ref{s:rbl-steps}.

\begin{defn}[In-sway weight function]\label{d:inswayfunctor}
Let $(S,\Gamma,\preceq)$ be a partially out-ordered digraph.  For mutual friends $x,z\in S$, define the following set:
\begin{center}$
\mathscr{G}_{(S,\Gamma,\preceq)}(x,z)
:=\left\{y\in\mathscr{F}_{S,\Gamma}(x,z):\begin{array}{c}
y\in\Gamma(x)\implies z\prec_{x}y,\\
y\in\Gamma(z)\implies x\prec_{z}y
\end{array}\right\}.
$\end{center}
Define $\sigma_{(S,\Gamma,\preceq)}:EZ^{-1}N^{\star}F(S,\Gamma,\preceq)\to[0,\infty]$ by $\sigma_{(S,\Gamma,\preceq)}\left(\{x,z\}\right):=\card\left(\mathscr{G}_{(S,\Gamma,\preceq)}(x,z)\right)$ and $\mathcal{L}(S,\Gamma,\preceq):=\left(Z^{-1}N^{\star}F(S,\Gamma,\preceq),\sigma_{(S,\Gamma,\preceq)}\right)$.  For $\xymatrix{(S,\Gamma,\preceq)\ar[r]^{\iota} & \left(S',\Gamma',\preceq'\right)}\in\ood$, define $\mathcal{L}(\iota):=\iota$.
\end{defn}

Our goal is to show that $\mathcal{L}$ defines a functor from $\ood$ to $\WGra$.  To that end, we prove a pair of technical lemmas, which will culminate in the functoriality of $\mathcal{L}$ and a commutativity in Figure \ref{f:constructiondiagram}.

\begin{figure}
$\xymatrix{
&   &   \ood\ar[dll]_{F}\ar[drr]^{\mathcal{L}}\\
\Digra\ar@/_1.5pc/[dr]_{N^{\diamond}}\ar@/^1.5pc/[dr]^{N^{\star}}  &   &   &   &    \WGra\ar@/^/[dl]^{W_{t}^{\star}}\\
&   \SDigra\ar[ul]_{N}\ar@/_/[d]_{M^{\diamond}} &   &   \Gra\ar[ll]_{Z}^{\cong}\ar@/^/[ur]^{W_{t}}\\
&   \Part\ar@/_/[u]_{M}
}$
\caption{Linkage Construction}
\label{f:constructiondiagram}
\end{figure}

\begin{lem}\label{l:technical1}
Let $\xymatrix{(S,\Gamma,\preceq)\ar[r]^{\iota} & \left(S',\Gamma',\preceq'\right)}\in\ood$, $x\in S$, and $z\in\Gamma(x)$.  If $y\in\Gamma(x)\implies z\prec_{x}y$ and $\iota(y)\in\Gamma'\left(\iota(x)\right)$, then $\iota(z){\prec'}_{\iota(x)}\iota(y)$.
\end{lem}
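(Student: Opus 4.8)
The goal is to deduce $\iota(z) \prec'_{\iota(x)} \iota(y)$, where $\iota(z)$ and $\iota(y)$ both lie in the out-neighborhood $\Gamma'(\iota(x))$, and we know that $z \prec_x y$ holds in the source. The plan is a straightforward two-case analysis according to whether $\iota(y)$ is in the image $\mathcal{P}(\iota)(\Gamma(x))$ of the original neighborhood. First I would record the standing facts: $z \in \Gamma(x)$ is given; since $\iota(y) \in \Gamma'(\iota(x))$ by hypothesis, the ordinal-sum condition of Definition \ref{d:noddi} is the tool that governs the relationship between $\iota(z)$ and $\iota(y)$.

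\emph{Case 1: $\iota(y) \notin \mathcal{P}(\iota)(\Gamma(x))$.} Then the ordinal-sum clause applies directly: for every $u \in \Gamma(x)$ we have $\iota(u) \prec'_{\iota(x)} \iota(y)$. Taking $u = z$ gives $\iota(z) \prec'_{\iota(x)} \iota(y)$ immediately, and we are done in this case without even using the hypothesis $y \in \Gamma(x) \implies z \prec_x y$.

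\emph{Case 2: $\iota(y) \in \mathcal{P}(\iota)(\Gamma(x))$.} Then there is some $v \in \Gamma(x)$ with $\iota(v) = \iota(y)$. Since $\iota$ is one-to-one, $v = y$, so $y \in \Gamma(x)$. Now the hypothesis $y \in \Gamma(x) \implies z \prec_x y$ fires, giving $z \prec_x y$ in the source poset $\preceq_x$. By the order-preserving condition of Definition \ref{d:noddi}, $z \preceq_x y$ implies $\iota(z) \preceq'_{\iota(x)} \iota(y)$; to upgrade $\preceq'$ to the strict $\prec'$ I would argue that $\iota(z) = \iota(y)$ is impossible, since injectivity of $\iota$ would then force $z = y$, contradicting $z \prec_x y$ (which is strict, hence $z \neq y$). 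Therefore $\iota(z) \prec'_{\iota(x)} \iota(y)$.

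The two cases are exhaustive, so the lemma follows. There is essentially no obstacle here; the only point requiring a little care is making sure the order-preserving clause of Definition \ref{d:noddi} is stated for $\preceq$ rather than $\prec$, so that the strictness in Case 2 has to be recovered separately via injectivity — but that is routine. (If the paper's convention is that the order-preserving clause already transports strict inequalities, Case 2 is even shorter.)
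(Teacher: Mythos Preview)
Your proof is correct and follows essentially the same approach as the paper's. The only cosmetic difference is that the paper splits cases on whether $y\in\Gamma(x)$ while you split on whether $\iota(y)\in\mathcal{P}(\iota)(\Gamma(x))$; since $\iota$ is injective these conditions are equivalent, and both proofs then invoke the ordinal-sum clause in one case and order-preservation plus injectivity (to recover strictness) in the other.
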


\begin{proof}

Consider the following two cases.
\begin{enumerate}

\item Assume $y\in\Gamma(x)$.
Then, $z\prec_{x}y$,
so $\iota(y),\iota(z)\in\Gamma'\left(\iota(x)\right)$ and $\iota(z){\preceq'}_{\iota(x)}\iota(y)$.
For purposes of contradiction, assume that $\iota(z)=\iota(y)$.
As $\iota$ is one-to-one, $z=y$,
which contradicts the strict inequality $z\prec_{x}y$.
Therefore, $\iota(z){\prec'}_{\iota(x)}\iota(y)$.

\item Assume $y\not\in\Gamma(x)$.
For purposes of contradiction, assume $\iota(y)\in\mathcal{P}(\iota)\left(\Gamma(x)\right)$.
Then there is $u\in\Gamma(x)$ such that $\iota(u)=\iota(y)$.
As $\iota$ is one-to-one, $u=y$,
contradicting that $y\not\in\Gamma(x)$.
Therefore, $\iota(y)\not\in\mathcal{P}(\iota)\left(\Gamma(x)\right)$,
so $\iota(z){\prec'}_{\iota(x)}\iota(y)$ by the ordinal sum condition on $\iota$.

\end{enumerate}

\end{proof}

\begin{lem}\label{l:technical2}
Let $\xymatrix{(S,\Gamma,\preceq)\ar[r]^{\iota} & \left(S',\Gamma',\preceq'\right)}\in\ood$ and $x,z\in S$.  If $x$ and $z$ are mutual friends, then $\mathcal{P}(\iota)\left(\mathscr{G}_{(S,\Gamma,\preceq)}(x,z)\right)\subseteq\mathscr{G}_{\left(S',\Gamma',\preceq'\right)}\left(\iota(x),\iota(z)\right)$.
\end{lem}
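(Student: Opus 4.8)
The plan is to unwind both sides of the claimed inclusion pointwise. Fix an arbitrary $y\in\mathscr{G}_{(S,\Gamma,\preceq)}(x,z)$; the goal is to show $\iota(y)\in\mathscr{G}_{\left(S',\Gamma',\preceq'\right)}\left(\iota(x),\iota(z)\right)$. Before anything else I would record that since $x$ and $z$ are mutual friends, neighborhood-preservation of $\iota$ gives $\iota(z)\in\Gamma'\left(\iota(x)\right)$ and $\iota(x)\in\Gamma'\left(\iota(z)\right)$, so $\iota(x)$ and $\iota(z)$ are mutual friends and the right-hand side is even well-defined.

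The first substantive step is to verify $\iota(y)\in\mathscr{F}_{S',\Gamma'}\left(\iota(x),\iota(z)\right)$. Since $y\in\mathscr{F}_{S,\Gamma}(x,z)$, one of the three defining disjunctions holds, and I would treat each in turn: each atomic condition ($x\in\Gamma(y)$, $z\in\Gamma(y)$, $y\in\Gamma(x)$, $y\in\Gamma(z)$) is carried by neighborhood-preservation to the corresponding condition for $\iota$ (e.g.\ $x\in\Gamma(y)$ yields $\iota(x)\in\Gamma'\left(\iota(y)\right)$), so whichever disjunct holds for $(x,z,y)$ produces the matching disjunct for $\left(\iota(x),\iota(z),\iota(y)\right)$. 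Alternatively one can invoke Lemma \ref{l:alternatef} together with the functoriality of $N^{\diamond}F$, using that $\iota$ induces a digraph homomorphism and hence preserves edges of the symmetric closure; but the direct case check is shorter.

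The second step is the pair of strict-inequality conditions defining $\mathscr{G}$. Suppose $\iota(y)\in\Gamma'\left(\iota(x)\right)$; I must produce $\iota(z)\prec'_{\iota(x)}\iota(y)$. Here Lemma \ref{l:technical1} applies verbatim: its hypotheses are $z\in\Gamma(x)$ (mutual friendship), the implication $y\in\Gamma(x)\implies z\prec_x y$ (part of membership of $y$ in $\mathscr{G}$), and $\iota(y)\in\Gamma'\left(\iota(x)\right)$, and its conclusion is exactly $\iota(z)\prec'_{\iota(x)}\iota(y)$. Symmetrically, if $\iota(y)\in\Gamma'\left(\iota(z)\right)$ I would apply Lemma \ref{l:technical1} with the roles of $x$ and $z$ interchanged --- using $x\in\Gamma(z)$ and $y\in\Gamma(z)\implies x\prec_z y$ --- to obtain $\iota(x)\prec'_{\iota(z)}\iota(y)$. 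Combining the two steps gives $\iota(y)\in\mathscr{G}_{\left(S',\Gamma',\preceq'\right)}\left(\iota(x),\iota(z)\right)$, and since $y$ was arbitrary the inclusion $\mathcal{P}(\iota)\left(\mathscr{G}_{(S,\Gamma,\preceq)}(x,z)\right)\subseteq\mathscr{G}_{\left(S',\Gamma',\preceq'\right)}\left(\iota(x),\iota(z)\right)$ follows.

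The genuine content has already been packaged into Lemma \ref{l:technical1} (which in turn is where the ordinal-sum axiom does its work), so I do not anticipate a real obstacle. The one place to be careful is the $\mathscr{F}$-membership step, specifically the mixed cases where $y$ is a friend of one of $x,z$ while the other is a friend of $y$: there one must keep precise track of which vertex plays which role so as not to collapse the three disjuncts into one another.
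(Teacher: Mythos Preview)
Your proposal is correct and follows essentially the same structure as the paper's proof: first establish $\iota(y)\in\mathscr{F}_{S',\Gamma'}\left(\iota(x),\iota(z)\right)$, then invoke Lemma~\ref{l:technical1} twice (once for $x$, once for $z$) to obtain the two strict-inequality clauses. The only cosmetic difference is that for the $\mathscr{F}$-membership step the paper takes the route you list as your alternative---it appeals to Lemma~\ref{l:alternatef} and the functoriality of $Z^{-1}N^{\diamond}F$ rather than checking the three disjuncts directly---but you already note both options and they are equivalent.
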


\begin{proof}

If $y\in\mathscr{G}_{(S,\Gamma,\preceq)}(x,z)$, then $y\in\mathscr{F}_{(S,\Gamma)}(x,z)$.
By Lemma \ref{l:alternatef},
\begin{enumerate}
\item $\{x,z\}\cap\Gamma(y)\neq\emptyset$,
\item $\{x,y\}\in EZ^{-1}N^{\diamond}F(S,\Gamma,\preceq)$,
\item $\{z,y\}\in EZ^{-1}N^{\diamond}F(S,\Gamma,\preceq)$.
\end{enumerate}
As $Z^{-1}N^{\diamond}F$ is a functor, $Z^{-1}N^{\diamond}F(\iota)=\iota$ is a graph homomorphism from $Z^{-1}N^{\diamond}F(S,\Gamma,\preceq)$ to $Z^{-1}N^{\diamond}F\left(S',\Gamma',\preceq'\right)$.
Thus, $\left\{\iota(x),\iota(y)\right\},\left\{\iota(z),\iota(y)\right\}\in EZ^{-1}N^{\diamond}F\left(S',\Gamma',\preceq'\right)$.
Also,
\begin{center}$\begin{array}{rcl}
\{x,z\}\cap\Gamma(y)\neq\emptyset
&   \iff    &   x\in\Gamma(y)\vee z\in\Gamma(y)\\
&   \implies    &   \iota(x)\in\Gamma'\left(\iota(y)\right)\vee\iota(z)\in\Gamma'\left(\iota(y)\right)\\
&   \iff    &   \left\{\iota(x),\iota(z)\right\}\cap\Gamma'\left(\iota(y)\right)\neq\emptyset.
\end{array}$\end{center}
Therefore, $\iota(y)\in\mathscr{F}_{\left(S',\Gamma'\right)}\left(\iota(x),\iota(z)\right))$.

As $y\in\mathscr{G}_{(S,\Gamma,\preceq)}(x,z)$, one has $y\in\Gamma(x)\implies z\prec_{x}y$ and $y\in\Gamma(z)\implies x\prec_{z}y$.
By Lemma \ref{l:technical1}, $\iota(y)\in\Gamma'\left(\iota(x)\right)\implies\iota(z){\prec'}_{\iota(x)}\iota(y)$ and $\iota(y)\in\Gamma'\left(\iota(z)\right)\implies\iota(x){\prec'}_{\iota(z)}\iota(y)$.
Hence, $\iota(y)\in\mathscr{G}_{\left(S',\Gamma',\preceq'\right)}\left(\iota(x),\iota(z)\right)$.

\end{proof}

\begin{theorem}[Functoriality]\label{th:functor}
The maps $\mathcal{L}$ define a functor from $\ood$ to $\WGra$.  Moreover, $ZW_{0}^{\star}\mathcal{L}=N^{\star}F$.
\end{theorem}

\begin{proof}

Let $\xymatrix{(S,\Gamma,\preceq)\ar[r]^{\iota} & \left(S',\Gamma',\preceq'\right)}\in\ood$.
As $Z^{-1}N^{\star}F$ is a functor, $Z^{-1}N^{\star}F(\iota)=\iota$ is a graph homomorphism from $Z^{-1}N^{\star}F(S,\Gamma,\preceq)$ to $Z^{-1}N^{\star}F\left(S',\Gamma',\preceq'\right)$.
If $\{x,z\}\in EZ^{-1}N^{\star}F(S,\Gamma,\preceq)$, then $x$ and $z$ are mutual friends by Lemma \ref{l:friendship}.
By Lemma \ref{l:technical2},
\begin{center}$\begin{array}{rcl}
\sigma_{(S,\Gamma,\preceq)}\left(\{x,z\}\right)
&   =   &   \card\left(\mathscr{G}_{(S,\Gamma,\preceq)}(x,z)\right)\\
&   =   &   \card\left(\mathcal{P}(\iota)\left(\mathscr{G}_{(S,\Gamma,\preceq)}(x,z)\right)\right)\\
&   \leq   &   \card\left(\mathscr{G}_{\left(S',\Gamma',\preceq'\right)}\left(\iota(x),\iota(z)\right)\right)\\
&   =   &   \sigma_{\left(S',\Gamma',\preceq'\right)}\left(\left\{\iota(x),\iota(z)\right\}\right).
\end{array}$\end{center}
Thus, $\mathcal{L}(\iota)=\iota$ is an edge-expansive homomorphism from $\mathcal{L}(S,\Gamma,\preceq)$ to $\mathcal{L}\left(S',\Gamma',\preceq'\right)$.  Moreover,
\begin{itemize}
\item $
ZW_{0}^{\star}\mathcal{L}(S,\Gamma,\preceq)
=ZW_{0}^{\star}\left(Z^{-1}N^{\star}F(S,\Gamma,\preceq),\sigma_{(S,\Gamma,\preceq)}\right)
=ZZ^{-1}N^{\star}F(S,\Gamma,\preceq)$

$={N^{\star}F(S,\Gamma,\preceq)}
$,
\item $
ZW_{0}^{\star}\mathcal{L}(\iota)
=ZW_{0}^{\star}(\iota)
=Z(\iota)
=\iota
=N^{\star}(\iota)
=N^{\star}F(\iota)
$.
\end{itemize}
Routine calculations show that $\mathcal{L}$ preserves compositions and identities.

\end{proof}

\subsection{Conclusion: the rank-based linkage functor}\label{s:composition}
Finally, we assemble the functors from Figure \ref{f:constructiondiagram} into a composite functor, which has the action of rank-based linkage.

\begin{defn}[Rank-based linkage functor]
Fix $t\in[0,\infty]$.
Define $\mathcal{R}_{t}:=M^{\diamond}ZW_{t}^{\star}\mathcal{L}$.
For $0\leq s\leq t\leq\infty$, let $\nu^{s,t}:=id_{M^{\diamond}Z}\ast\rho^{s,t}\ast id_{\mathcal{L}}$, the horizontal product\footnote{
The \emph{horizontal composition} \cite[p.\ 46]{leinster} (i.e.\ \emph{star product} \cite[Exercise 6A.a]{joyofcats} or \emph{Godement product} \cite[p.\ 13]{borceux1}) of natural transformations $\alpha:F\to G$ and $\beta:H\to K$ is determined by $\left(\beta\ast\alpha\right)_{A}:=\beta_{G(A)}\circ H\left(\alpha_{A}\right)=K\left(\alpha_{A}\right)\circ\beta_{F(A)}$.
} of $\rho^{s,t}$ with the respective identity transformations.
\end{defn}

\begin{figure}
\begin{center}$\xymatrix{
\ood\ar[rr]^{\mathcal{L}} & &
\WGra\rrtwocell^{W_{t}^{\star}}_{W_{s}^{\star}}{\;\;\;\;\rho^{s,t}} & &
\Gra\ar[rr]^{M^{\diamond}Z} & &
\Part
}$\end{center}
\caption{Linkage Functor \& Transformation}
\end{figure}

The functor $\mathcal{R}_{t}$ implements the rank-based linkage algorithm from Section \ref{s:rbl-steps} precisely:
\begin{enumerate}
\item $\mathcal{L}$ (Definition \ref{d:inswayfunctor}) computes the weighted linkage graph;
\item $W_{t}^{\star}$ (Definition \ref{d:cutoff}) prunes all edges with in-sway below $t$;
\item $M^{\diamond}Z$ (Definitions \ref{d:defineZ}, \ref{d:rtclos}) determines connected components of the resulting graph.
\end{enumerate}
Moreover, observe that by Theorem \ref{th:functor},
$\mathcal{R}_{0}=M^{\diamond}ZW_{0}^{\star}\mathcal{L}=M^{\diamond}N^{\star}F$,
which returns the connected components of the linkage graph with no pruning of edges.
Furthermore, the natural transformation $\nu^{s,t}$ passes from $\mathcal{R}_{t}$ to $\mathcal{R}_{s}$, preventing the resulting partitions from breaking apart as the values of $t$ decrease.
The work of this section may be concisely expressed as:

\begin{theorem}[SUMMARY]\label{th:summary}
The rank-based linkage algorithm is the functor $\mathcal{R}_{t}$ from the category of partially out-ordered digraphs
to the category of partitioned sets.
\end{theorem}

\subsection{Data science application}\label{s:applications}
Return to the context of Section \ref{s:functormotivation}, where
the scientist wishes to perform rank-based linkage
on a superset $S' \supset S$ of the set to which it was originally applied.
How do \textit{neighborhood-ordinal injections} (Definition \ref{d:noddi}) -- the
morphisms in the category of partially out-ordered digraphs -- appear in this context?

Suppose the partially out-ordered digraph on objects $S$ was built on the $K$-NN digraph.
The partially out-ordered digraph on objects $S' \subset S$ must be built on the $K'$-NN digraph
where $K' \geq K$ is large enough so that if $y$ is one of 
the $K$-NN of $z$ in $(S, \{\prec_x, x \in S\})$,  then it must be one of
the $K'$-NN of $z$ in $(S',  \{\prec'_x, x \in S\})$. This will ensure that
the injection  $S \to S'$ is a neighborhood-ordinal injection, i.e. a morphism in
the category of partially out-ordered digraphs. 

Theorem \ref{th:functor} assures us that
a pair $\{x, z\}$ of mutual friends in $(S, \Gamma , \preceq)$ whose in-sway is at least $t$
will also have an insway at least $t$ when rank-based linkage is re-applied to $(S', \Gamma' , \preceq')$.
This ensures that if objects $x$ and $y$ are in the same component of $(S, \mathcal{L}_t)$,
then they are also in the same component of $(S', \mathcal{L}'_t)$. This is the functorial
property of rank-based linkage.

%%%%%%%%%%%%%%%%%%%%%%%%%%%%%%%%%%%%%%%%%%%%%%%%%%%%%%%%%%%%
\section{Data merge: gluing ranking systems together}\label{s:gluers}

\subsection{Data science context: merging sets of data objects}

Principled methods of merging methods and data of unsupervised learning projects
are rare, as the survey \cite{lah} shows. 
A categorical approach to merging databases is presented by
Brown, Spivak \& Wisnesky \cite{bro}, but here we consider the unsupervised learning issue.
Proposition \ref{p:sheafify} and Theorem \ref{th:summary} have direct application to 
pooling ranking systems on sets of data objects.

\noindent \textbf{Genetics: } Alice and Bob pursue bio-informatics in different laboratories.
Alice employs a collection of white mice to cluster a set $S$ of murine gene mutations, using
comparator $\{\prec_t\}_{t \in S}$ based on arithmetic on feature vectors. Meanwhile Bob uses his brown mice
 to cluster a set $S'$ of mutations, using
comparator $\{\prec'_x\}_{x \in S'}$. Although their sets of mice are disjoint, the
intersection $S \cap S'$ of mutations may be non-empty.

\noindent \textbf{Music streaming: } A streaming service specializing in Jazz merges with a service specializing in Pop.
Their recommender systems are similar enough to rank ``track to play next'' the same for tracks in the intersection of 
their catalogs, although the recommenders are constructed from different feature vectors.

\subsection{Glued comparators}\label{s:gluecompare}
Alice and Bob meet at a data science conference. They realize that there is a non-trivial intersection $S \cap S'$
of their genetic mutation sets. Although the feature vectors and similarity measures differ between their respective analyses,
they can at least verify a \textit{compatibility condition} between $\{\prec_t\}_{t \in S}$ and $\{\prec'_x\}_{x \in S'}$: if
$a, b, c$ are mutations in the intersection $S \cap S'$, then
Alice and Bob agree on the restrictions of their orientations to the boundary of the 2-simplex $\{ab, bc, ac\}$
when their respective comparators are applied to each mouse data set.
In other words, relations $a \prec_b c$ and $a \prec'_b c$ are equivalent
for $a, b, c \in S \cap S'$.
This condition is illustrated in the red arcs in Figure \ref{f:pushout}.

If the compatibility condition holds, Alice and Bob can glue their oriented simplicial complexes together, 
thanks to the sheaf theory developed in Section \ref{s:sheafview}.

\begin{figure}
\caption{
\textbf{TOP LEFT:} 
\textit{Concordant orientation of 1-simplices from objects $S:=\{a,b,c,t,u\}$.}
\textbf{TOP RIGHT:} 
\textit{Same for $S':=\{a,b,c,x,y\}$; rankings among $\{a, b,c\}$ same as for $S$.}
\textbf{BOTTOM LEFT:} 
\textit{Orientations above are glued together along 1-simplices 
$\{bc \rightarrow ab, ab \rightarrow ac, bc \rightarrow ac \}$.}
\textbf{BOTTOM RIGHT:} 
\textit{Twelve 1-simplices without an orientation. For example the enforcer triangle $\{ab, ax, at\}$ lacks orientation $ax \longrightarrow at$ 
or $ax \longleftarrow at$. Gluing succeeds despite lack of a ranking system on $S \cup S'$.
}
}
\label{f:pushout}
\begin{center}
\includegraphics[width=\linewidth]{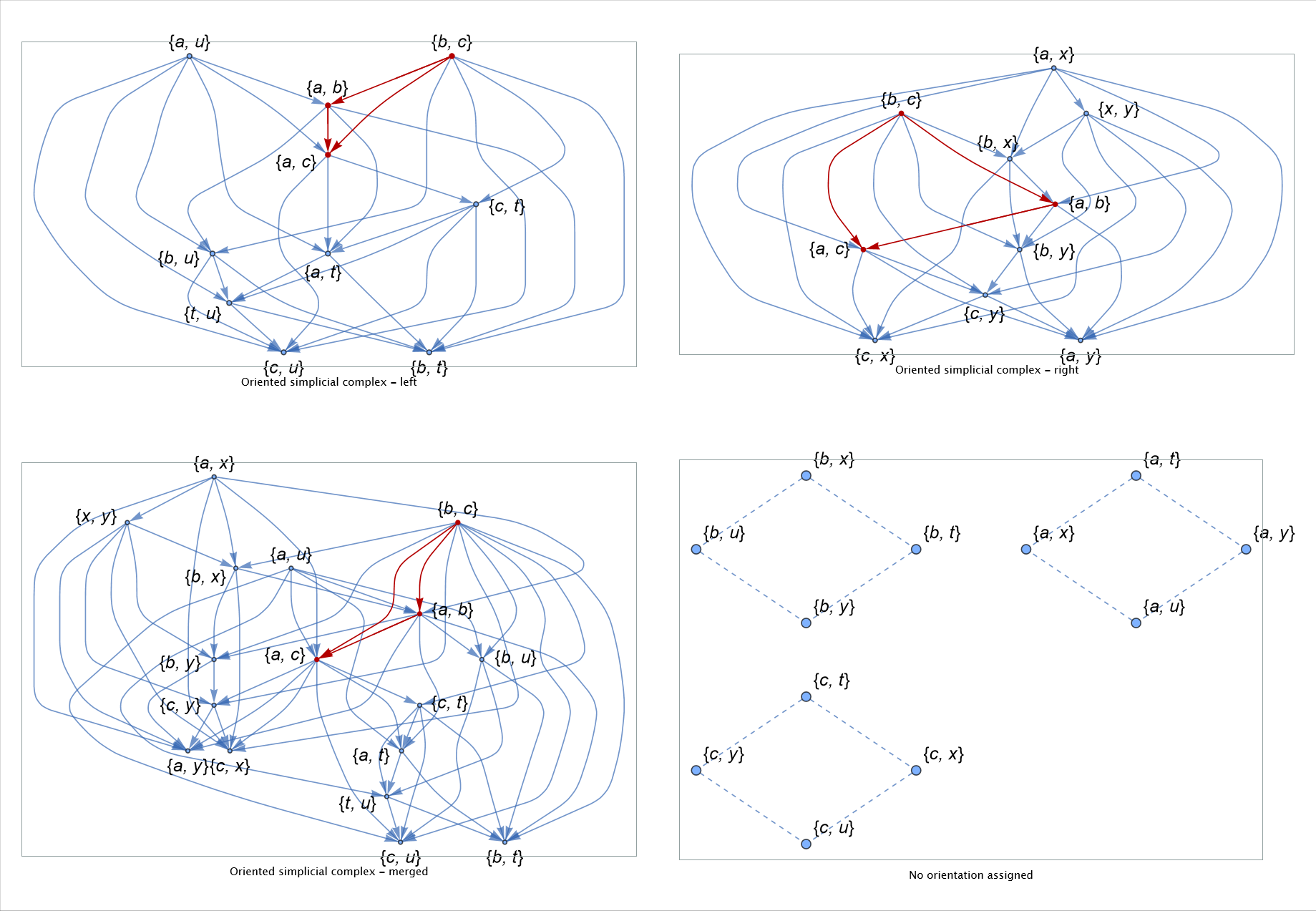} 
\end{center}
\end{figure}

\subsection{Sheaves can be glued, but comparators cannot} Under the compatibility condition, one might try to build 
a glued comparator $\{\prec''_z\}_{z \in S \cup S'}$ from comparators
$\{\prec_t\}_{t \in S}$ and $\{\prec_x\}_{x \in S'}$ thus:
$\prec''_z$ agrees with $\prec_z$ for $z \in S$, and agrees with $\prec'_z$ for $z \in S'$.
\textit{This fails in general} for reasons demonstrated in the last panel of Figure \ref{f:pushout}.
The problem arises when $a, b \in S \cap S'$, 
$x \in S' \setminus S$, and $t \in S \setminus S'$: the 1-simplex $\{ax, at\}$ lacks an orientation,
so enforcer triangle $\{ab, ax, at\}$ has an unoriented boundary edge, and thus $\prec''_a$ is not a total order 
on its friend set, but merely a partial order.

Here is a concrete example in the music streaming case. Say $b$ falls in both Jazz and Pop catalogs,
while $t$ is only in Pop and $x$ is only in Jazz. For the  comparator based at $b$, neither Jazz nor Pop supplies a mechanism to decide 
which of $t$ or $x$ is more similar to $b$.

As for voter triangles, let $t, u, v$ stand for elements of $S$, and let $x, y, z$ stand for elements of $S'$.
Figure \ref{f:pushout} makes it clear
that four types of voter triangle appear in $S \cup S'$:
 \begin{equation}\label{e:voter4types}
     \begin{tikzcd}{} & tu \arrow{dr}{} \arrow{dl}{} \\ uv \arrow{ur}{} \arrow{rr}{} &&
     \arrow{ul}{}  \arrow{ll}{} tv\end{tikzcd} 
     \begin{tikzcd}{} & tx \arrow{dr}{} \arrow{dl}{} \\ tu \arrow{ur}{} \arrow{rr}{} &&
     \arrow{ul}{}  \arrow{ll}{} ux\end{tikzcd} 
          \begin{tikzcd}{} & uy \arrow{dr}{} \arrow{dl}{} \\ xy \arrow{ur}{} \arrow{rr}{} &&
     \arrow{ul}{}  \arrow{ll}{} ux\end{tikzcd}  
          \begin{tikzcd}{} & yz \arrow{dr}{} \arrow{dl}{} \\ xz \arrow{ur}{} \arrow{rr}{} &&
     \arrow{ul}{}  \arrow{ll}{} xy\end{tikzcd}
     \end{equation}
     
The first and last of the voter triangles (\ref{e:voter4types}) can be understood entirely from the perspective
of $(S, \{\prec_t\}_{t \in S})$ and of $(S', \{\prec_x\}_{x \in S'})$, respectively.
However, the middle two in (\ref{e:voter4types}) involve interaction between the two ranking systems.
Some such voter triangles may be excluded because they fail
to be $\Gamma$-pertinent, for example if $\Gamma(x) \cap \{t, u\}$ is empty.

\subsection{Superior generality of the sheaf perspective}\label{s:sheafmerge}

The examples and the demonstration in Figure \ref{f:pushout} show why the sheaf perspective 
offers greater generality than the out-ordered digraph view when it comes to pooling ranking systems.
Our main sheaf result -- Corollary \ref{c:3sheaf} -- ensures that 3-concordant oriented simplicial complexes
can be glued together 
if orientations agree on their shared 1-simplices, and the result is 3-concordant oriented simplicial complex.
This orientation need not come necessarily from an
out-ordered digraph, because some enforcer triangles may be undefined. However the rank-base linkage algorithm 
may still be applied because partially out-ordered digraphs (Definition \ref{d:pood}) still exist at each vertex.

%%%%%%%%%%%%%%%%%%%%%%%%%%%%%%%%%%%%%%%%%%%%%%%%%%%%%%%
\section{Sampling from 3-concordant ranking systems}\label{s:sampling}

Nothing more will be said about the theory of rank-based linkage.
This section serves as a source of tools for generating
synthetic data for experiments with rank-based linkage.

\subsection{Why sampling is important}\label{s:whysample}
Machine learning algorithms applied to collections of points in
uncountable metric spaces rarely admit any kind of statistical null hypothesis,
because there is no uniform distribution on such infinite state spaces.

By contrast, the 3-concordant ranking systems on
$n$ points, for a specific $n$, form a finite set.
Hence uniform sampling from such a set may be possible. Indeed 
Table \ref{t:3conc10points}. presents a uniform random
3-concordant ranking table $T$, generated by rejection sampling.
The statistics of links arising from rank-based linkage applied to $T$ provide
a ``null model'', to which results of rank-based linkage on a specific data set may be
compared, in the same way that statisticians compare $t$-statistics for numerical
data to $t$-statistics under a null hypothesis.
With this in mind, we explore the limits of rejection sampling, and the
opportunity for Markov chain Monte Carlo.

\subsection{Ranking table}
Concise representation of a 3-concordant ranking system on a
set $S:=\{x_1, x_2, \ldots, x_n\}$ 
uses an $n \times n$ \textbf{ranking table} $T$, such as Table \ref{t:3conc10points}. 
Each row in table $T$
is a permutation of the integers $\{0, 1, 2, \ldots n-1\}$. 
The $(i, j)$ entry $r_i(j)$ is the rank awarded to object $x_j$ by the
ranking based at object $x_i$, taking $r_i(i)=0$. The advantage of this
representation is that it requires only $n^2$ entries, and makes all
enforcer triangles in the line graph acyclic automatically.

The condition for such a table to present a 3-concordant ranking system
is that, whenever $r_i(j) < r_i(k)$, then it is \textit{false} that
\begin{equation}\label{e:votercheck}
  r_j(k) < r_j(i) \quad \bigwedge \quad r_k(i) < r_k(j)  
\end{equation}
for otherwise the 2-simplex $\{ij, jk, ik\}$ would contain a 3-cycle, namely 
\[
  \begin{tikzcd}{} & ik \arrow{dr}{} \\ij \arrow{ur}{} \arrow[leftarrow]{rr}{} && jk\end{tikzcd} 
\]
Here we abuse notation, referring to the 1-simplex $x_i x_j$ as simply $i j$.

\subsection{Rejection sampling from ranking tables}
When $n \leq 10$, a uniformly random 3-concordant ranking system  can be found by
sampling uniformly from the set of ranking tables, and rejecting if any of the tests
(\ref{e:votercheck}) fails. Table \ref{t:rejectsample} shows some results, and
Table \ref{t:3conc10points} came from our largest experiment.

\begin{table}[]
    \centering
    \begin{tabular}{c|c|c|c|c|c}
Number of objects &  $n=6$  & $n=7$ & $n=8$ & $n=9$ & $n=10$ \\ \hline \hline
Number of attempts  & $10^6$ & $10^4$ & $10^3$ & 50 & 2 \\ \hline
Mean \# to success & 97.58  & 1,860.5 & 85,990 & $1.58 \times 10^7$ & $\frac{1}{2}(0.3 + 9.5) \times 10^9$\\ \hline
Rate of 4-cycles & 1.4\% & 1.3\% & 1.2\% & 1.2\% & 1.2\% or 0.7\% \\ \hline

    \end{tabular}
    \caption{
    \textit{Rejection sampling from ranking tables: }The reciprocal of the mean number of trials
    to success is an estimate of the proportion of 3-concordant ranking systems among
    all ranking systems. Statistics in the last row come from
   sampling $\binom{n}{4}$ 4-tuples (with replacement),
    and counting how many times the 4-tuple was a 4-loop, either in original or reversed order;
    this appears to be decreasing.
    The second $n=10$ trial took 16 hours, and the result appears
    in Table \ref{t:3conc10points}.}
    \label{t:rejectsample}
\end{table}

\subsection{Consecutive transposition in ranking tables}

Consider a graph $G_n = (R_n, F_n)$ whose vertex set $R_n$ consists of the 
ranking systems on an $n$-element set, each expressed in the form of a ranking table $T$.
Tables $T$ and $T'$ are adjacent under $F_n$
if $T'$ is obtained from $T$ by an operation called consecutive transposition, 
defined as follows. It is related to bubble sort and to Kendall tau distance \cite{kum}.

\begin{defn}\label{d:reflect}
Given an $n \times n$ ranking table $T$, a consecutive transposition means:
\begin{enumerate}
    \item Select a row $i$ for $1 \leq i \leq n$, and a rank $s$ between 1 and $n-2$.
    \item Identify elements $x_j$ and $x_k$ such that $r_i(j) = s$ and $r_i(k) = s+1$,
    i.e. $x_j$ and $x_k$ are ranked consecutively according to the ranking at $x_i$,
    so in particular $ij \rightarrow ik$.
     \item Transpose $x_j$ and $x_k$ in the ranking at $x_i$, meaning that
    we redefine $r_i(j) = s+1$ and $r_i(k) = s$, so in particular $ij \leftarrow ik$.
\end{enumerate}
\end{defn}

\begin{lem}\label{l:reflect}
If a ranking table $T$ presents a 3-concordant ranking system, then so does 
a ranking table $T'$ obtained from $T$ by a consecutive transposition, unless
\[
     r_j(i) < r_j(k) \, \bigwedge \, r_k(j) < r_k(i)
\] 
\end{lem}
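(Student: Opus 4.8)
The plan is to track exactly which 1-simplices of the line graph $L(\mathcal{K}_S)$ change orientation when $T$ is replaced by $T'$, and then to inspect only the 2-simplices through those 1-simplices. Since a consecutive transposition alters only row $i$ of the ranking table, and within that row only swaps the ranks $s$ and $s+1$ carried by $x_j$ and $x_k$, the only comparisons $r_i(\ell)<r_i(m)$ that can flip are those with $\{\ell,m\}=\{j,k\}$: any $x_m$ with $m\notin\{j,k\}$ has $r_i(m)<s$ or $r_i(m)>s+1$ both before and after, so its rank relative to $x_j$ and to $x_k$ is unchanged. Hence the unique 1-simplex whose orientation changes is $\{ij,ik\}$, which goes from $ij\rightarrow ik$ in $T$ to $ik\rightarrow ij$ in $T'$, while every entry of $r_j(\cdot)$ and $r_k(\cdot)$ is untouched.

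First I would record that every 2-simplex of $\Delta(L(\mathcal{K}_S))$ not containing the 1-simplex $\{ij,ik\}$ retains its orientation, hence remains acyclic because $T$ is 3-concordant. So it suffices to examine the 2-simplices that do contain $\{ij,ik\}$. Enumerating the triangles of the line graph through the edge $\{ij,ik\}$: the third 0-simplex must meet both $ij$ and $ik$, which forces it to be either $i\ell$ for some $\ell\notin\{i,j,k\}$ (an enforcer triangle $\{ij,ik,i\ell\}$ as in (\ref{e:enforcer3})) or $jk$ (the voter triangle $\{ij,jk,ik\}$ as in (\ref{e:voter3})). Enforcer triangles are acyclic for any ranking table, since their orientation is read off the total order $r_i(\cdot)$ and a transposition leaves $r_i(\cdot)$ a permutation; this disposes of the enforcer case for $T'$ exactly as for $T$.

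It then remains to analyze the single voter triangle $\{ij,jk,ik\}$ under the orientation coming from $T'$. Its edge $\{ij,ik\}$ is now oriented $ik\rightarrow ij$, while $\{ij,jk\}$ is oriented $ij\rightarrow jk$ or $jk\rightarrow ij$ according as $r_j(i)<r_j(k)$ or not, and $\{ik,jk\}$ is oriented $jk\rightarrow ik$ or $ik\rightarrow jk$ according as $r_k(j)<r_k(i)$ or not. A directed $3$-cycle on the three 0-simplices $ij,ik,jk$ must traverse them in one of the two cyclic orders; the order containing $ij\rightarrow ik$ is ruled out since $T'$ has $ik\rightarrow ij$, so the only way to produce a $3$-cycle is $ij\rightarrow jk\rightarrow ik\rightarrow ij$, which requires precisely $ij\rightarrow jk$ and $jk\rightarrow ik$, i.e.\ $r_j(i)<r_j(k)$ and $r_k(j)<r_k(i)$. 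This is exactly the excluded condition in the statement; when it fails, the voter triangle $\{ij,jk,ik\}$ is acyclic in $T'$, so every 2-simplex is acyclic under $T'$ and $T'$ presents a 3-concordant ranking system.

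The argument is essentially bookkeeping. The one place that genuinely needs care is the combinatorial claim in the first paragraph that a consecutive transposition flips exactly one 1-simplex, together with the enumeration in the second paragraph showing that the triangles through that 1-simplex are exactly one voter triangle plus enforcers; I expect that to be the main (if minor) obstacle, along with keeping the orientations of the voter triangle straight when reducing to the excluded condition.
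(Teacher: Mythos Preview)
Your proof is correct and follows essentially the same approach as the paper's: identify that only the single 1-simplex $\{ij,ik\}$ flips, observe that enforcer triangles remain acyclic because each row of $T'$ is still a permutation, and then verify that the unique affected voter triangle $\{ij,jk,ik\}$ becomes a 3-cycle precisely under the excluded condition. Your enumeration of the 2-simplices through the flipped edge and the orientation analysis of the voter triangle are more explicit than the paper's terse version, but the logic is the same.
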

\begin{proof}
Let $T'$ be obtained from $T$ as explained in Definition \ref{d:reflect}.
 The enforcer triangles of form $\{i k, i k', i k''\}$ are still
acyclic, because the rankings $r_i(\cdot)$ are unchanged except at 
ranks $s$ and $s+1$. For $\ell \neq i$, none of the rankings $r_{\ell}(\cdot)$
have changed so all enforcer triangles remain acyclic. The only
voter triangle which has changed is $\{ij, jk, ik\}$,  and the condition of the Lemma ensures that
the 2-simplex $\{ij, jk, ik\}$ does not contain a 3-cycle
in Step 4. Hence none of the triangles in $T'$ is a directed cycle.
\end{proof}

\subsection{Random walk on 3-concordant ranking systems}
We implemented a random walk on 3-concordant ranking systems using the following three steps:

\begin{enumerate}
    \item Generate a random permutation of the 0-simplices. An acyclic orientation of the 1-simplices follows
    by taking $xy \rightarrow xz$ whenever $xy$ precedes $xz$ in this random permutation. Initialize the
    ranking table using this orientation, which is concordant, and hence \textit{a fortiori} 3-concordant.
    \item 
    Generate a uniformly random consecutive transposition (Definition \ref{d:reflect}), and 
    perform the transposition unless the condition of Lemma \ref{l:reflect} applies. 
    \item 
    Repeat the last step many times. 
\end{enumerate}

\begin{defn}\label{d:rct-reflection}
    Random consecutive transposition with reflection at
    the 3-concordant boundary is the random walk defined above.
\end{defn}
By Lemma \ref{l:reflect}, the random walk of Definition \ref{d:rct-reflection} 
remains 3-concordant at every step.

It would be ideal to terminate when ``equilibrium'' is attained,
following Aldous and Diaconis \cite{ald} and
hundreds of later papers which cite \cite{ald}.
Unfortunately there is a difficulty. The Markov chain is not ergodic.
\begin{center}
\textit{The graph $G = (R_n, F_n)$ on the 3-concordant ranking systems 
need not be connected.}
\end{center}
Experiments revealed a counterexample when $n = 100$ of a vertex in $R_n$ which is 3-concordant,
without any 3-concordant neighbors in $G_n.$
Hence at best the random walk will settle to equilibrium in the component determined by the initial orientation. 

\begin{figure}
\caption{\textbf{Edge Flip Counterexample: }
\textit{
A 3-concordant ranking system on four points, expressed as an
orientation of its line graph (the octahedron). Both $\{v(1), v(2)\}$
and $\{v(3), v(4)\}$ are sources. Compare this to the orientation where both $\{v(1), v(2)\}$ and
$\{v(3), v(4)\}$ are sinks. We claim that, among the edges where these two orientations
differ, none of their directions can be reversed without creating a directed 3-cycle.
This shows the proof method of \cite{fuk} for constructing a path between two acyclic orientations
is no longer valid for 3-concordant orientations.
}
}
\label{f:counterex}
\begin{center}
\scalebox{0.4}{\includegraphics{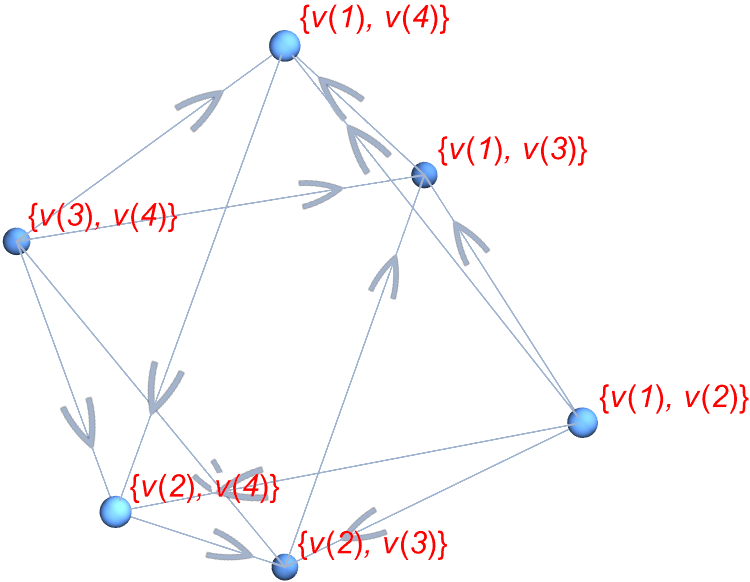} }
\end{center}
\end{figure}

By contrast, acyclic orientations form a connected graph, where adjacency
corresponds to a single edge flip.
Fukuda, Prodon, \& Sakuma \cite{fuk} prove
this by showing that, given two acyclic orientations $\omega, \omega'$,
there is always some edge where $\omega$ and $\omega'$ differ which can be flipped
to give another acyclic orientation.
This assertion no longer holds for 3-concordant orientations. A specific counterexample 
for $n=4$ is shown in Figure \ref{f:counterex}.

\subsection{Open problems}\label{s:open}
\subsubsection{The puzzle of the 4-cycles}
Exhaustive enumeration revealed exactly 450 3-concordant ranking systems on the 
four points $\{0, 1, 2, 3\}$. Of these, there are 8
in which the 4-loop $\{01, 12, 23, 04\}$ is a 4-cycle, in forward or reverse order.
To make Table \ref{t:3conc10points}, we constructed 
uniform random samples of 3-concordant ranking systems for $n = 6, 7, \ldots, 10$
In these samples, when a 4-tuple was sampled uniformly at random,
the proportion of 4-cycles (forwards or backwards) among
corresponding 4-loops was not $\frac{8}{450} = 1.777\cdots \%$.
Rather it seemed to decrease towards 1.2\%. It seems that the ranking system
induced on $\{0, 1, 2, 3\}$ by a uniform 3-concordant ranking system on 
$\{0, 1, 2, \ldots, n-1\}$ is not uniformly distributed among the 450 possibilities. 

We suspect that the explanation for this is as follows.  Let us choose a $3$-concordant
ranking system on $4$ points uniformly at random and consider extensions of it to a
fifth point.  It turns out that the expected number of possible extensions is
$e = 114248/75 = 1523.30667\dots$; on the other hand, if we consider only the $24$ systems that 
are not $4$-concordant, the expected number is only $e' = 4148/3 = 1382.666\dots$.  Accordingly
one might expect that if we choose a $3$-concordant ranking system uniformly on $n$ points,
the probability of its restriction to a given set of $4$ points not being $4$-concordant
would be about $24/450 \cdot (e'/e)^{n-4}$, so that the probability of a given $4$-loop
would be about $1/3$ of this.  This is roughly consistent with our observations; we do not
expect this model to be exactly correct, because the events are not independent.

\subsubsection{Intermediate sheaves}
One might wish to consider a condition on orientations that is stronger
than $3$-concordance, but that nevertheless determines a sheaf.  Is there a natural
way of associating a nonzero sheaf $\mcf_?(S)$ to a simplicial complex $S$ such that
$\mcf_?(S) \subseteq \mcf_3(S)$ for all $S$, functorially with 
respect to inclusions of simplicial complexes, where the inclusions are proper for
general $S$?%\footnote{This should probably be expressed in terms of the category of
%simplices with inclusions as morphisms, but the
%sheaves are on different simplices and it is no longer clear how to define a morphism.}
\subsubsection{Products of Cayley graphs}
Suppose we modify Definition \ref{d:rct-reflection} by skipping the check for 
the condition of Lemma \ref{l:reflect}. This unconstrained random walk is simply an $n$-fold product
of random walks on Cayley graphs; the Cayley graph for each factor is $\Gamma(\Sigma_{n-1}, \mathcal{S})$,
where $\Sigma_{n-1}$ is the symmetric group on $n-1$ elements, and the generating set $\mathcal{S}$ 
consists of permutations $(1, 2), (2, 3), \ldots, (n-2, n-1)$. For this Cayley graph, Bacher \cite{bac}
has shown that the eigenvalues of the Laplacian satisfy
\[
\lambda_2 = \ldots = \lambda_{n-1} = 2 - 2 \cos\left( \frac{\pi}{n-1} \right).
\]
This tells us about the rate of convergence to equilibrium of the unconstrained random walk.
The condition which forces reflection at the boundary of the 3-concordant ranking systems
has the effect of inducing dependence between the factors in this product of $n$ random walks
on copies of $\Gamma(\Sigma_{n-1}, \mathcal{S})$. Does convergence happen in about the same
time scale. Is it $O(n^3)$ steps, or $O(n^2 \log{n})$, or some other rate?

\subsubsection{Other Monte Carlo algorithm} Is there a better Markov chain
Monte Carlo algorithm for sampling
uniformly random 3-concordant ranking systems?
\\
\\
\noindent \textbf{Acknowledgments: } The authors thank Kenneth Berenhaut for introducing us to rank-based methods; the referee for valuable 
 improvements to the presentation, Figures \ref{f:nofriendsofx} \& \ref{f:noglue3} in particular; 
 Marcus Bishop for help with Figure \ref{f:k4};
 and the Tutte Institute (Ottawa) for hosting a workshop
 where this project began.

%%%%%%%%%%%%%%%%%%%%%%%%%%%%%%%%%%%%%%%%%%%%%%%%%%%%%%%%%%%%

\end{document}